\newcommand{\R}{\ensuremath{\mathbb{R}}}
\newcommand{\N}{\ensuremath{\mathbb{N}}}
\newcommand{\mcf}{\eqref{eq:MCF}\ }
\newcommand{\Div}{\mathrm{div}} 
\newcommand{\dist}{\mathrm{dist}} 
\newcommand{\diag}{\mathrm{diag}} 
\newcommand{\tr}{\mathrm{tr}}
\newcommand{\diam}{\mathrm{diam}}
\newcommand{\pd}{\partial}
\newcommand{\cd}{\nabla}
\newcommand{\inner}[2]{\left\langle #1 \, , \, #2\right\rangle} 
\newcommand{\norm}[1]{\left\Vert#1\right\Vert} 
\newcommand{\M}{M} 
\newcommand{\X}{X} 
\newcommand{\W}{A} 
\newcommand{\lb}{\left(}
\newcommand{\rb}{\right)}
\newcommand{\lsb}{\left[}
\newcommand{\rsb}{\right]}
\newcommand{\lcb}{\left\{}
\newcommand{\rcb}{\right\}}
\newcommand{\Ges}{G_{\varepsilon,\sigma}}
\newcommand{\Gesp}{G_{\varepsilon,\sigma,+}}
\newcommand{\Gesk}{G_{\varepsilon,\sigma,K}}
\newcommand{\Geskp}{G_{\varepsilon,\sigma,K,+}}
\renewcommand{\AA}{\textit{\r{A}}}
\def\labelitemi{--}
\def\ba #1\ea {\begin{align} #1\end{align}}
\def\bann #1\eann {\begin{align*} #1\end{align*}}
\def\ben #1\een {\begin{enumerate} #1\end{enumerate}}
\def\bi #1\ei {\begin{itemize}\renewcommand\labelitemi{--} #1\end{itemize}}
\theoremstyle{plain}
\numberwithin{equation}{section}
\newtheorem{thm}{Theorem}[section]
\newtheorem*{thm*}{Theorem}
\newtheorem{lem}[thm]{Lemma}
\newtheorem{cor}[thm]{Corollary}
\newtheorem{claim}[thm]{Claim}
\newtheorem{prop}[thm]{Proposition}
\newtheorem*{conds*}{Conditions}
\newtheorem*{auxconds*}{Ancillary Conditions}
\newtheorem*{props*}{Properties}
\theoremstyle{remark}
\newtheorem{rem}{Remark}[section]
\title[Sharp one-sided curvature estimates for MCF]{Sharp one-sided curvature estimates for mean curvature flow.}
\author{Mat Langford}
\date{\today}
\begin{document}

\begin{abstract}
We prove a sharp pinching estimate for immersed mean convex solutions of mean curvature flow which unifies and improves all previously known pinching estimates, including the umbilic estimate of Huisken \cite{Hu84}, the convexity estimates of Huisken--Sinestrari \cite{HuSi99b} and the cylindrical estimates of Huisken--Sinestrari \cite{HuSi09} (see also \cite{AnLa14,HuSi15}). Namely, we show that the curvature of the solution pinches onto the convex cone generated by the curvatures of any shrinking cylinder solutions admitted by the initial data. For example, if the initial data is $(m+1)$-convex, then the curvature of the solution pinches onto the convex hull of the curvatures of the shrinking cylinders $\mathbb{R}^m\times S^{n-m}_{\sqrt{2(n-m)(1-t)}}$, $t<1$. In particular, this yields a sharp estimate for the largest principal curvature, which we use to obtain a new proof of a sharp estimate for the inscribed curvature for embedded solutions \cite{Br15,HaKl15,La15}. Making use of a recent idea of Huisken--Sinestrari \cite{HuSi15}, we then obtain a series of sharp estimates for ancient solutions. In particular, we obtain a convexity estimate for ancient solutions which allows us to strengthen recent characterizations of the shrinking sphere due to Huisken--Sinestrari \cite{HuSi15} and Haslhofer--Hershkovitz \cite{HaHe}. 
\end{abstract}

\maketitle
\tableofcontents

\section{Introduction}

Let $M^n$ be a smooth manifold of dimension $n$ and $I\subset \R$ an interval. A smooth one-parameter family $X:M^n\times I\to \R^{n+1}$ of smooth immersions $X(\cdot,t):M^n \to \R^{n+1}$ \emph{evolves by mean curvature flow} if its velocity at each point is given by its mean curvature at that point; that is, if
\begin{equation}\label{eq:MCF}\tag{MCF}
\pd_tX(x,t) = \vec H(x,t)
\end{equation}
for every $(x,t)\in M^n\times I$, where $\vec H=\Div(DX)$ is the mean curvature vector of the immersion. We will be interested in mean curvature flow of \emph{mean convex} (resp. \emph{strictly mean convex}) hypersurfaces; that is, two-sided hypersurfaces such that, with respect to one of the two choices of unit normal field $\nu$, the mean curvature $H=-\vec H\cdot \nu$ is non-negative (resp. positive). We will often say that a solution of \eqref{eq:MCF} is a compact/mean convex/etc. solution if all the time slices $M^n_t:=X_t(M^n)$, where $X_t:=X(\cdot,t)$, are compact/mean convex/etc. Unless otherwise stated, we allow the possibility that the solution consists of multiple connected components.


A fundamental tool in the analysis of solutions of \mcf is the parabolic maximum principle. A basic consequence is the preservation of mean convexity under the flow, since the mean curvature $H$ satisfies the \emph{Jacobi equation}
\ba\label{eq:evolveH}
(\pd_t-\Delta)H=|A|^2H\,,
\ea
where $\Delta$ is the Laplace--Beltrami operator, $A$ is the second fundamental tensor and $| \cdot |$ the norm corresponding to the induced geometry. We will also be interested in other convexity conditions such as (\emph{strict}) \emph{convexity} and (\emph{strict}) $k$-\emph{convexity}, where we recall that a hypersurface is called \emph{convex}\footnote{If $M^n=\pd K$ for some convex body $K\subset\R^{n+1}$, we will say, explicitly, that $M^n$ \emph{bounds a convex body}.
} (resp. \emph{strictly convex}) if its Weingarten tensor $A$ is everywhere non-negative definite (resp. positive definite) 
and \emph{$k$-convex} (resp. \emph{strictly $k$-convex}) for some $k\in\{1,\dots n\}$ if the sum $\kappa_1+\dots+\kappa_k$ of its smallest $k$ principal curvatures is everywhere non-negative (resp. positive). 
More generally, we will consider $n$-dimensional hypersurfaces whose Weingarten tensors, after choosing an orthonormal basis, lie, at every point, inside some convex subset $\Gamma\subset \mathcal{S}(n)$ in the space $\mathcal{S}(n)$ of self-adjoint endomorphisms of $\R^n$. In order to ensure that the condition is independent of the choice of basis, we should require that $\Gamma$ is \emph{$O(n)$-invariant}; that is, invariant under the action of $O(n)$ by conjugation. Moreover, since rescaling limits at singularities tend to move purely by scaling, we should also require that $\Gamma$ is a \emph{cone}; that is, invariant under scaling by positive numbers. A more subtle application of the maximum principle---to the evolution equation for the second fundamental form
\ba\label{eq:evolveA}
(\cd_t-\Delta)A=|A|^2A\,,
\ea
where $\cd_t$ is the covariant time derivative (see e.g. \cite[\S 6.3]{AH11})---reveals that such sets are preserved by \eqref{eq:MCF} (see e.g. \cite[\S 4 and \S 8]{Hm86}, \cite[\S 7.3]{AH11} or \S \ref{sec:prelims} below). 
 In particular, $k$-convexity is preserved for any $k\in\{1,\dots,n\}$.

\subsection{One sided curvature pinching}


Mean curvature flow of convex hypersurfaces in dimensions $n\geq 2$ was studied in a groundbreaking paper of Huisken \cite{Hu84}, where it was proved that such hypersurfaces shrink to `round' points. A crucial part of the analysis was the \emph{umbilic estimate}, which states that given any $\varepsilon>0$ there is a constant $C_\varepsilon$, which depends only on $\varepsilon$ and the initial data, such that
\ba\label{eq:umbilic}
|\AA|^2\leq \varepsilon H^2+C_\varepsilon\,,
\ea
where $\AA$ denotes the trace free part of $A$. In particular, this implies that the scaling invariant ratio $|\AA|^2/H^2$ is becoming arbitrarily small wherever $H$ is blowing up. In other words, the hypersurface is, modulo rescaling, umbilic at a singularity. To prove such an estimate, one attempts to bound the function
\bann
f_\sigma:=\frac{|\AA|^2}{H^2}H^\sigma
\eann
for some small $\sigma>0$. Huisken achieves this by bounding the $L^p$-norms of $f_\sigma$ for large $p$ and $\sigma\approx p^{-\frac{1}{2}}$ and applying a Stampacchia-type iteration with the help of the Michael--Simon Sobolev inequality. This method turns out to be quite robust and variants of the argument were later applied to obtain curvature estimates in the non-convex setting. The next breakthrough was the \emph{convexity estimate} \cite{HuSi99a,HuSi99b} (see also \cite{Wh03}), which states that, if the initial immersion is strictly mean convex, then given any $\varepsilon>0$ there is a constant $C_\varepsilon$, which depends only on $\varepsilon$ and the initial data, such that
\ba\label{eq:convexity}
\kappa_1\geq -\varepsilon H-C_\varepsilon\,.
\ea
This estimate implies that the scaling invariant tensor $A/H$ is non-negative definite at a singularity, which markedly constrains the geometry of the hypersurface at such points. Interpolating between the umbilic and convexity estimates \eqref{eq:umbilic} and \eqref{eq:convexity} are the $m$-\emph{cylindrical estimates} \cite{HuSi09,AnLa14,HuSi15}: If the initial immersion is $(m+1)$-convex, $m\in\{0,\dots,n-1\}$, then given any $\varepsilon>0$ there is a constant $C_\varepsilon$, which depends only on $\varepsilon$ and the initial data, such that
\ba\label{eq:mcylindrical}
|A|^2-\frac{1}{n-m}H^2\leq \varepsilon H^2+C_\varepsilon\,.
\ea
On a convex hypersurface, the left hand side of \eqref{eq:mcylindrical} is non-positive only at points which are either strictly $m$-convex, $\kappa_1+\dots+\kappa_m>0$, or \emph{$m$-cylindrical}, $0=\kappa_1=\dots=\kappa_m$ and $\kappa_{m+1}=\dots=\kappa_n$. This is particularly restrictive in case $m=1$ and $n\geq 3$. Indeed, in that case, up to rescaling, every singularity has the geometry of either shrinking sphere $S^n_{\sqrt{2n(1-t)}}$, a shrinking cylinders $\R\times S^{n-1}_{\sqrt{2(n-1)(1-t)}}$ or the strictly convex, rotationally symmetric translating bowl \cite{HuSi09,Ha,LB}. Note that the case $m=0$ is just the umbilic estimate \eqref{eq:umbilic} and the case $m=n-1$ follows from the convexity estimate \eqref{eq:convexity}. 

We also note that, somewhat surprisingly, the iteration method can even be applied in the setting of fully non-linear curvature flows (by isotropic, 1-homogeneous flow speeds), so long as the speed satisfies appropriate structure conditions. Indeed, flows by convex speed functions tend to admit lower curvature pinching \cite{ALM14,AnLa14}, whereas flows by concave speed functions tend to admit upper curvature pinching \cite{L,BrHu}
. Moreover, no concavity assumption is necessary in two space dimensions \cite{An10,ALM15}.


In \S \ref{sec:pinching}, we prove a unified and sharp result of the above form. Namely, we show that the curvature of a mean convex solution pinches onto the convex cone generated by the curvatures of any shrinking cylinders not ruled out by the initial curvature hull. To state the result precisely, we refer to an open, convex, $O(n)$-invariant cone $\Gamma\subset \mathcal{S}(n)$ in the normed linear space $\mathcal{S}(n)$ of self-adjoint endomorphisms of $\R^n$ as a \emph{pinching condition} and call a point $W\in \mathcal{S}(n)$ \emph{cylindrical} if, for some $m\in\{0,\dots,n\}$, $W$ has a null eigenvalue of multiplicity $m$ and a positive eigenvalue of multiplicity $(n-m)$. We will also abuse notation by writing $A_{(x,t)}\in \Gamma$ if this is true after identifying $(T_xM,g_{(x,t)})$ with $(\R^n,\inner{\cdot}{\cdot}_{\R^n})$ by choosing some (and hence any) orthonormal basis for $(T_xM,g_{(x,t)})$.

\begin{thm}[Pinching principle]\label{thm:pinching}
Fix a dimension $n\in \N\setminus\{1\}$ and a pinching condition $\Gamma\subset \mathcal{S}(n)$ and denote by $\Lambda$ the convex hull of the cylindrical points in $\Gamma$. Then, given any pinching condition $\Gamma_0\subset \mathcal{S}(n)$ satisfying $\overline \Gamma_0\setminus\{0\}\subset \Gamma$, a curvature scale $\Theta<\infty$, and any $\varepsilon>0$, there is a constant $C_\varepsilon=C_\varepsilon(n,\Gamma_0,\Theta,\varepsilon)<\infty$ with the following property: Let $X:M^n\times[t_0,T)\to\R^{n+1}$ be a compact solution of \mcf satisfying
\ben
\item $\displaystyle \lb\frac{\mu_{t_0}(M^n)}{\sigma_n}\rb^{\frac{1}{n}}+\sqrt{2n(T-t_0)}\leq 2R$, where $\sigma_n:=\mathrm{Area}(S^n)$,
\item $\displaystyle \max_{M^n\times\{t_0\}}H\leq \Theta R^{-1}$ and
\item $\displaystyle A_{(x,t_0)}\in \overline\Gamma_0$ for all $x\in M^n$.
\een
Then 
\ba\label{eq:pinching}
\dist(A_{(x,t)},\Lambda)\leq \varepsilon H(x,t)+C_\varepsilon R^{-1}
\ea
for all $(x,t)\in M^n\times[t_0,T)$.
\end{thm}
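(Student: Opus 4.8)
The plan is to follow the Stampacchia iteration strategy pioneered by Huisken \cite{Hu84} and refined by Huisken--Sinestrari \cite{HuSi99b,HuSi09}, but applied to a scale-invariant quantity that measures the distance of $A/H$ to the convex hull $\Lambda$ of the cylindrical points. First I would introduce a smooth, convex, $O(n)$-invariant function $\phi:\Gamma\to[0,\infty)$ which is $1$-homogeneous and comparable to $\dist(W,\Lambda)/\tr W$ on the relevant region, vanishing exactly on the cone generated by $\Lambda$; the existence of such a regularized gauge-type function (smoothed near $\partial\Gamma$ and near $\Lambda$) is a technical but standard construction, using that $\Gamma$ is open and convex and $\Lambda$ is a compact slice of a closed convex subcone. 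Then I would study $f:=\phi(A)/H^{1-\sigma}$ (equivalently $f_\sigma = (\phi(A)/H)\cdot H^\sigma$) for a small parameter $\sigma>0$ to be chosen $\approx p^{-1/2}$, and derive its evolution equation from \eqref{eq:evolveA} and \eqref{eq:evolveH}.

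The key algebraic step, and the one I expect to be the main obstacle, is the \emph{gradient term estimate}: showing that the reaction terms in the evolution of $\phi(A)/H$ are non-positive (so that $\phi(A)/H$ is a subsolution up to lower-order terms), and that the "good" negative gradient term arising from the Codazzi-type identity dominates the "bad" gradient term $\langle \nabla \log H, \nabla f\rangle$-type contributions after multiplying by $f^{p-1}$ and integrating. This is where the convexity and $O(n)$-invariance of $\phi$ are essential — convexity of $\phi$ combined with the Jacobi equation forces the zeroth-order reaction term to have the right sign on $\Gamma$ (this is the infinitesimal form of preservation of $\Gamma$ and of the cylinder cone), and the curvature-derivative inequalities of Huisken type (e.g. $|\nabla A|^2 \geq \tfrac{3}{n+2}|\nabla H|^2$, or its refinement near cylindrical points) provide the spare gradient term needed to absorb the error. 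I would need to check that the constant in the divergence structure is strong enough uniformly as the curvature approaches the cylindrical stratum; near points where $H$ is small this is harmless since $\phi(A)\le C H$ automatically from hypothesis (iii) and preservation of a slightly larger cone $\Gamma$.

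With the differential inequality in hand, I would run the Stampacchia--De Giorgi iteration: test the evolution inequality for $f_\sigma$ against $(f_\sigma-k)_+^{p-1}$, use the Michael--Simon Sobolev inequality on $M^n_t$ together with the area and mean-curvature bounds (i) and (ii) to control the geometry-dependent constants in terms of $n,\Theta,R$, and conclude that $\sup f_\sigma$ is bounded by a constant depending only on $n,\Gamma_0,\Theta,\varepsilon$ and the initial value, which by hypothesis (iii) and $\overline\Gamma_0\setminus\{0\}\subset\Gamma$ is finite. Unwinding $f_\sigma\le C$ gives $\phi(A)\le \varepsilon' H + C_{\varepsilon'} R^{-1}$ for $\phi(A)\le \varepsilon H^{1-\sigma}H^\sigma$-type bookkeeping, and since $\phi$ is comparable to $\dist(\cdot,\Lambda)$ this yields \eqref{eq:pinching} after adjusting $\varepsilon$. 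The only remaining subtlety is tracking the dependence of all constants through the iteration so that they genuinely depend only on $(n,\Gamma_0,\Theta,\varepsilon)$ and not on finer features of the solution; this is handled, as in \cite{HuSi99b}, by the scaling normalization built into hypotheses (i)--(iii) and the scale-invariance of $f_\sigma$ up to the explicit $R^{-1}$ terms.
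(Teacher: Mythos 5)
Your outline correctly identifies the overall architecture — pinch a scale-invariant quantity measuring distance to $\Lambda$, derive an evolution inequality, absorb the bad reaction term with a Poincar\'e-type inequality, and run the Stampacchia iteration with Michael--Simon — and your smooth-regularization of the distance function is a legitimate (if slightly more laborious) alternative to the paper's viscosity/distributional treatment of $d_\Lambda(A)$ via Alexandroff's theorem. But there is a genuine gap at what you yourself flag as ``the main obstacle'': the source of the good gradient term.

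For the classical quantities ($|\AA|^2/H^2$, $|A|^2-\tfrac{1}{n-m}H^2$, etc.) the negative gradient term arises from the quadratic-in-$A$ algebraic structure, so Huisken's Lemma 2.3 applies directly. For the distance $d_\Lambda(A)$ to a general convex cone (or a smoothed $\phi$), differentiating the evolution of $\phi(A)/H^{1-\sigma}$ does \emph{not} produce a usable negative gradient term: the supporting hyperplanes of $\Lambda$ can degenerate near the cylindrical stratum (the coefficients $(\ell^i-\ell^p)/(\kappa_p-\kappa_i)$ in the natural gradient term vanish exactly there), and the inequalities you invoke (e.g.\ $|\nabla A|^2\geq \tfrac{3}{n+2}|\nabla H|^2$) only compare $|\nabla A|$ with $|\nabla H|$; they do not convert into a negative term in $(\pd_t-\Delta)(\phi(A)/H)$. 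The paper resolves this by introducing the auxiliary function $G_2:=2LH-|A|$, whose evolution \eqref{eq:evolvenormA} carries a coercive gradient term $\tfrac{1}{2|A|^3}|A\otimes\cd A-\cd A\otimes A|^2$, and then working with the ratio $G:=G_1^2/G_2$ where $G_1:=\max\{-d_\Lambda(A),0\}$; the negative gradient term for $G$ is inherited entirely from $G_2$, after which Lemma \ref{lem:gradterm} (valid because $\overline\Gamma_0$ avoids $\mathrm{diag}(0,\dots,0,1)$) converts it into $-\gamma_1 G|\cd A|^2/H^2$ on $\{G\geq\varepsilon H\}$. Without some such device your differential inequality will not close, and the subsequent Poincar\'e step also relies on a specific structure: the paper's Proposition \ref{prop:Poincare} uses Simons' identity with the tensor $C=A\otimes A^2-A^2\otimes A$, whose norm vanishes precisely on $\mathrm{Cyl}$, to absorb $\sigma p\int G_{\varepsilon,\sigma,+}^p|A|^2\,d\mu$ by the gradient terms; your proposal does not indicate how that absorption is effected near the cylindrical stratum, which is exactly where the pinching is tight. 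Finally, a bound on $G=G_1^2/G_2$ still has to be unwound into a bound on $G_1$; the paper does this by a short two-case argument that should also appear in your writeup.
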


Of course, any compact solution of \mcf arising from an initial immersion $X_0:M^n\to\R^{n+1}$ satisfying $A_{(x,0)}\in \Gamma$ for all $x\in M^n$ satisfies each of the conditions (1)--(3) for some such $R$, $\Theta$ and $\Gamma_0$. By the strong maximum principle, this is also the case, after waiting a short time, under the initial condition $A_{(x,0)}\in \overline\Gamma$. 

Theorem \ref{thm:pinching} is asymptotically sharp: Fix $\Gamma$ and choose $m$ so that the curvature of $\R^m\times S^{n-m}$ lies in $\pd\Lambda$. Then given data $n$, $\Theta$ and $\Gamma_0$ (containing the curvature of $\R^m\times S^{n-m}$) there is a sequence of compact solutions $X_i:M_i^n\times [t_i,1)\to\R^{n+1}$ of \mcf satisfying (1)--(3) with $R_i\to\infty$ which converge locally uniformly to the shrinking cylinder solution $\R^m\times S^{n-m}_{\sqrt{2(n-m)(1-t)}}$, $t\in (-\infty,1)$.

The proof of Theorem \ref{thm:pinching} boils down to two rather simple observations: First, the (signed) distance of the Weingarten curvature to the boundary of a convex cone in $\mathcal{S}(n)$ is a supersolution of the Jacobi equation (Proposition \ref{prop:evolveG}) 
 and, second, the crucial estimates needed to set up the Stammpachia iteration argument hold away from cylindrical points (Lemma \ref{lem:gradterm} and Proposition \ref{prop:Poincare}). 

Each of the aforementioned one-sided curvature estimates is an easy corollary of Theorem \ref{thm:pinching}. The following corollary is not implied by the previous estimates.

\begin{cor}[$m$-convexity estimate]\label{cor:mconvexity}
Given a dimension $n\in\N\setminus\{1\}$, a pinching constant $\alpha>0$, a curvature scale $\Theta<\infty$ and any $\varepsilon>0$, there is a constant $C_\varepsilon<\infty$ with the following property: Let $X:M^n\times[t_0,T)\to\R^{n+1}$ be a compact solution of \mcf satisfying, for some $m\in\{0,\dots,n-2\}$,
\ben
\item $\displaystyle \lb\frac{\mu_{t_0}(M^n)}{\sigma_n}\rb^{\frac{1}{n}}+\sqrt{2n(T-t_0)}\leq 2R$, where $\sigma_n:=\mathrm{Area}(S^n)$,
\item $\displaystyle \max_{M^n\times\{t_0\}}H\leq \Theta R^{-1}$ and
\item $\displaystyle\min_{M^n\times\{t_0\}}\frac{\kappa_1+\dots+\kappa_{m+1}}{H}\geq \alpha$.
\een
Then 
\ba\label{eq:mconvexity}
\lb\kappa_n-\frac{1}{n-m}H\rb(x,t)\leq \varepsilon H(x,t)+C_\varepsilon R^{-1}
\ea
for all $(x,t)\in M^n\times[t_0,T)$.
\end{cor}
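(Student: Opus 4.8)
The plan is to deduce the estimate directly from the pinching principle (Theorem~\ref{thm:pinching}), applied to the pinching condition given by uniformly pinched $(m+1)$-convexity. For $\beta\in\R$ set
\[
\Gamma_{m,\beta}:=\lcb W\in\mathcal{S}(n):\tr W>0\ \text{and}\ \lambda_1(W)+\dots+\lambda_{m+1}(W)>\beta\,\tr W\rcb,
\]
where $\lambda_1(W)\leq\dots\leq\lambda_n(W)$ are the eigenvalues of $W$ (so that $\kappa_i=\lambda_i(A)$). Since $W\mapsto\lambda_1(W)+\dots+\lambda_{m+1}(W)$ is concave on $\mathcal{S}(n)$, each $\Gamma_{m,\beta}$ is an open, convex, $O(n)$-invariant cone, hence a pinching condition. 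As $\kappa_1+\dots+\kappa_{m+1}\leq\tfrac{m+1}{n}H$ pointwise forces $\alpha\leq\tfrac{m+1}{n}$, I would fix reals $0<\alpha'<\alpha''<\alpha$ and take $\Gamma:=\Gamma_{m,\alpha'}$ and $\Gamma_0:=\Gamma_{m,\alpha''}$.

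Conditions (1) and (2) of Theorem~\ref{thm:pinching} coincide with those of the corollary, and condition (3) of the corollary forces the solution to be strictly mean convex at time $t_0$, with $\kappa_1+\dots+\kappa_{m+1}\geq\alpha H>\alpha'' H$ there; thus $A_{(x,t_0)}\in\Gamma_0\subset\overline\Gamma_0$. The step requiring the most care---indeed the only genuine obstacle---is the verification that $\overline\Gamma_0\setminus\{0\}\subset\Gamma$. One first checks (approximating a given $W$ by $W+s\,\id$ as $s\downarrow 0$, using $\alpha''<\tfrac{m+1}{n}$) that $\overline\Gamma_0=\lcb W:\tr W\geq 0,\ \lambda_1(W)+\dots+\lambda_{m+1}(W)\geq\alpha''\tr W\rcb$, and then that, because $m\leq n-2$, this set meets the hyperplane $\{\tr W=0\}$ only at the origin: if $\tr W=0$ and $\lambda_1(W)+\dots+\lambda_{m+1}(W)\geq0$ then $\lambda_{m+1}(W)\geq 0$ and
\[
(n-m)\lambda_{m+1}(W)\leq\lambda_{m+1}(W)+\dots+\lambda_n(W)=-\big(\lambda_1(W)+\dots+\lambda_m(W)\big)\leq\lambda_{m+1}(W),
\]
which forces $\lambda_{m+1}(W)=0$ and then $W=0$. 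Hence $\overline\Gamma_0\setminus\{0\}$ lies in the open half-space $\{\tr W>0\}$, on which $\lambda_1+\dots+\lambda_{m+1}\geq\alpha''\tr W>\alpha'\tr W$, which is exactly $\overline\Gamma_0\setminus\{0\}\subset\Gamma$.

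Applying Theorem~\ref{thm:pinching} with data $(n,\Gamma_0,\Theta,\varepsilon')$ for a small $\varepsilon'$ to be chosen then produces $C_{\varepsilon'}=C_{\varepsilon'}(n,m,\alpha,\Theta,\varepsilon')<\infty$ with $\dist(A_{(x,t)},\Lambda)\leq\varepsilon' H(x,t)+C_{\varepsilon'}R^{-1}$, where $\Lambda$ is the convex hull of the cylindrical points of $\Gamma$. The key observation is that every cylindrical point $W\in\Gamma$ has null eigenvalue of multiplicity at most $m$: if $W$ has eigenvalue $0$ of multiplicity $k$ and eigenvalue $c>0$ of multiplicity $n-k$, then $\lambda_1(W)+\dots+\lambda_{m+1}(W)=\max\{0,m+1-k\}\,c$, which can exceed $\alpha'\tr W>0$ only when $k\leq m$; and for such $W$,
\[
\lambda_n(W)-\tfrac{1}{n-m}\tr W=c-\tfrac{(n-k)c}{n-m}=\tfrac{k-m}{n-m}\,c\leq0.
\]
Since $\phi(W):=\lambda_n(W)-\tfrac1{n-m}\tr W$ is convex (a convex function plus a linear one) and positively $1$-homogeneous, it follows that $\phi\leq0$ on $\Lambda$, and hence on $\overline\Lambda$. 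Finally, $\phi$ is $L$-Lipschitz on $\mathcal{S}(n)$ for some $L=L(n)<\infty$, so taking $W^\ast$ to be a nearest point of $\overline\Lambda$ gives $\phi(W)\leq\phi(W^\ast)+L|W-W^\ast|\leq L\,\dist(W,\overline\Lambda)=L\,\dist(W,\Lambda)$ for every $W$; with $W=A_{(x,t)}$ this yields
\[
\lb\kappa_n-\tfrac1{n-m}H\rb(x,t)=\phi\big(A_{(x,t)}\big)\leq L\varepsilon'\,H(x,t)+L\,C_{\varepsilon'}R^{-1},
\]
so that choosing $\varepsilon':=\varepsilon/L$ and $C_\varepsilon:=\max_{0\leq m\leq n-2}L\,C_{\varepsilon/L}$ (the maximum over the finitely many admissible values of $m$) produces \eqref{eq:mconvexity}. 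Apart from the cone bookkeeping of the second paragraph, every step here is either a one-line computation or a direct appeal to Theorem~\ref{thm:pinching}.
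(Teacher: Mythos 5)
Your proof is correct and follows essentially the same route as the paper: apply Theorem~\ref{thm:pinching} with $\Gamma$ a (strict) uniformly-$(m+1)$-convex cone, note that the cylindrical points it admits have null multiplicity at most $m$ so their convex hull lies in $\lcb W: W\leq\tfrac{1}{n-m}\tr(W)\,\id\rcb$, and convert the resulting distance estimate into a bound on $\kappa_n-\tfrac{1}{n-m}H$. The paper states only the observation about the convex hull and leaves the cone bookkeeping (choice of $\Gamma_0$, verification of $\overline\Gamma_0\setminus\{0\}\subset\Gamma$) and the final Lipschitz step implicit, whereas you spell them out; this is fuller but not a different argument.
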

\begin{proof}
Since the $(m+1)$-convexity condition describes a convex, $O(n)$-invariant cone in $\mathcal{S}(n)$, we need only observe that each of the cylindrical points admitted by the $(m+1)$-convexity condition (and hence their convex hull) is contained in the convex cone
\bann
\Lambda:=\lcb W\in \mathcal{S}(n): W\leq \frac{1}{n-m}\tr(W)\mathrm{I}\rcb\,.
\eann
\end{proof}

This estimate is also asymptotically sharp, since the left hand side of \eqref{eq:mconvexity} vanishes on the shrinking cylinder $\R^m\times S^{n-m}_{\sqrt{2(n-m)(1-t)}}$. Moreover, it is not implied by the cylindrical estimates \eqref{eq:mcylindrical} (geometrically, the cone which gives rise to the $m$-cylindrical estimate \eqref{eq:mcylindrical} is the round cone whose axis is the umbilic ray and whose boundary contains the $m$-cylindrical points).

Obtaining a sharp estimate for $\kappa_n$ is a key step in our proof of a sharp estimate for the \emph{inscribed curvature}, which we shall now describe. 

\subsection{The inscribed curvature}

We turn our attention now to embedded hypersurfaces. Let $M^n=\pd \Omega\subset \R^{n+1}$ be a properly embedded hypersurface bounding a precompact open set $\Omega\subset\R^{n+1}$ and equip $M^n$ with its outward pointing unit normal. Then the \emph{inscribed curvature} $\overline k(x)$ of a point $x\in M^n$ is defined as the curvature of the boundary of the largest ball which is contained in $\Omega$ and has first order contact with $M^n$ at $x$ \cite{ALM13}. A straightforward calculation \cite[Proposition 2]{An12} reveals that
\ba\label{eq:inscribedk}
\overline k(x)=\sup_{y\in M^n\setminus\{x\}}k(x,y)\,,
\ea
where
\bann
k(x,y):=\frac{2\inner{x-y}{\nu(x)}_{\R^{n+1}}}{\norm{x-y}^2_{\R^{n+1}}}\,.
\eann
Similarly, one can define the \emph{exscribed curvature} $\underline k(x)$ at $x$ as the (signed) boundary curvature of the largest ball, halfspace or ball compliment having exterior contact at $x$. In that case, one observes
\ba\label{eq:exscribedk}
\underline k(x)=\inf_{y\in M\setminus\{x\}}k(x,y)\,.
\ea
Note that reversing the orientation of the hypersurface\footnote{For a mean convex hypersurface, we will always define $\overline k$ and $\underline k$ with respect to the normal whose mean curvature is non-negative. This agrees with the outward pointing normal if $M^n$ is connected.} interchanges $\overline k$ and $\underline k$. Observing that either the supremum in \eqref{eq:inscribedk} is attained, or else $\overline k(x)=\limsup_{y\to x}k(x,y)=\sup_{y\in T_xM\setminus\{0\}}A_x(y,y)/g_x(y,y)$, allows one to obtain derivative identities (in, say, the viscosity sense) for $\overline k$ (and similarly for $\underline k$) by analysing the smooth `two-point functions' $k(x,y)$ and $A_x(y,y)/g_x(y,y)$ \cite{An12,ALM13,NC2}. In particular, along a solution of mean curvature flow, we obtain
\ba\label{eq:evolveinscribedk}
(\pd_t-\Delta)\overline k \leq |A|^2\overline k
\ea
and
\ba\label{eq:evolveexscribedk}
(\pd_t-\Delta)\underline k \geq |A|^2\underline k\,.
\ea
Since $H$ solves \eqref{eq:evolveH}, a simple application of the maximum principle reveals that mean convex solutions of mean curvature flow are \emph{interior} (resp. \emph{exterior}) \emph{non-collapsing}: $\overline k$ (resp. $\underline k$) can be compared from above (resp. below) by $H$ uniformly in time. 
For mean curvature flow of convex hypersurfaces, a straightforward blow-up argument shows that these  ratios become optimal at a singularity \cite{NC2}, yielding a rather straightforward proof of the theorems of Huisken \cite{Hu84} and Gage--Hamilton \cite{GaHa86} on the convergence of convex solutions of mean curvature flow to round points. Moreover, Brendle \cite{Br15} was able to prove, using a Stampacchia iteration argument similar to those described above, that this is also the case for mean convex mean curvature flow (see also \cite{HaKl15}). Precisely, Brendle showed that for any $\varepsilon>0$ there is a constant $C_\varepsilon$, which depends only on $\varepsilon$ and the initial data, such that
\ba\label{eq:inscribed1}
\overline k-H\leq \varepsilon H+C_\varepsilon\,.
\ea
and
\ba\label{eq:exscribed1}
\underline k\geq -\varepsilon H-C_\varepsilon\,.
\ea
These estimates are sharp due to the fact that $\overline k\equiv H$ and $\underline k\equiv 0$ hold identically on a shrinking cylinder $\R^{n-1}\times S^1_{\sqrt{2(n-1)(1-t)}}$. 

This estimate was improved for $(m+1)$-convex mean curvature flow in \cite{La15} using a blow-up argument and the new compactness results of Haslhofer--Kleiner \cite{HK1}. In section \S \ref{sec:inscribed} we will show that this estimate also follows from a Stampacchia iteration argument.

\begin{thm}[Inscribed curvature pinching. Cf. \cite{La15}]\label{thm:inscribed}
Given a dimension $n\geq 2$, a curvature scale $\Theta<\infty$, a pinching constant $\alpha>0$, a collapsing constant $\varLambda<\infty$ and any $\varepsilon>0$, there is a constant $C_\varepsilon<\infty$ with the following property: Let $X:M^n\times[t_0,T)\to\R^{n+1}$ be a compact solution of \mcf satisfying, for some $m\in\{0,\dots,n-2\}$,
\ben
\item $\displaystyle \lb\frac{\mu_{t_0}(M^n)}{\sigma_n}\rb^{\frac{1}{n}}+\sqrt{2n(T-t_0)}\leq 2R$, where $\sigma_n:=\mathrm{Area}(S^n)$,
\item $\displaystyle \max_{M^n\times\{t_0\}}H\leq \Theta R^{-1}$,
\item $\displaystyle\min_{M^n\times\{t_0\}}\frac{\kappa_1+\dots+\kappa_{m+1}}{H}\geq \alpha$ and
\item $\displaystyle\max_{M^n\times\{t_0\}}\frac{\overline k}{H}\leq \varLambda$.
\een
Then
\bann
\lb\overline k-\frac{1}{n-m}H\rb(x,t)\leq \varepsilon H(x,t)+C_\varepsilon R^{-1}
\eann
for all $(x,t)\in M^n\times[t_0,T)$.
\end{thm}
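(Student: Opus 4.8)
The plan is to run the Stampacchia iteration of \S\ref{sec:pinching}, now on the merely Lipschitz inscribed curvature $\overline k$, using the sharp bound for $\kappa_n$ from Corollary \ref{cor:mconvexity} to confine the analysis to the regime in which Andrews' two-point maximum principle applies. I would first record the a priori bounds. Combining \eqref{eq:evolveinscribedk} with \eqref{eq:evolveH} shows that $\overline k/H$ is a viscosity subsolution of the drift equation $(\pd_t-\Delta)w\le 2H^{-1}\inner{\cd H}{\cd w}$ --- one uses here that $\overline k=\sup_y k(\cdot,y)$ is a supremum of smooth two-point functions, so the maximum principle applies in the usual weak sense --- whence hypothesis (4) propagates: $\overline k\le\varLambda H$ on $M^n\times[t_0,T)$. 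Likewise, the $(m+1)$-convexity condition $\lcb\kappa_1+\dots+\kappa_{m+1}\ge\alpha H\rcb$ describes a pinching condition, so hypothesis (3) propagates and Corollary \ref{cor:mconvexity} applies: for every $\varepsilon'>0$ there is $C'_{\varepsilon'}=C'_{\varepsilon'}(n,\alpha,\Theta)$ with $\kappa_n-\frac{1}{n-m}H\le\varepsilon' H+C'_{\varepsilon'}R^{-1}$ throughout. In particular, for any fixed $\delta>0$ the shifted quantity $G_\delta:=\overline k-\lb\frac{1}{n-m}+\delta\rb H$ is bounded above by $\lb\varLambda-\frac{1}{n-m}\rb H$ and --- since the shift is a multiple of $H$ --- is still a subsolution of the Jacobi equation, $(\pd_t-\Delta)G_\delta\le|A|^2G_\delta$. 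Finally, for a suitable $C_0=C_0(n,\alpha,\Theta,\varLambda)$ I may restrict to the region $\lcb H>C_0R^{-1}\rcb$, since on its complement $\overline k-\frac{1}{n-m}H\le\varLambda H\le\varLambda C_0R^{-1}$ is already acceptable.

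For $\delta>0$ fixed and $\sigma>0$ to be chosen small, set $f_\sigma:=(G_\delta)_+H^{\sigma-1}$. The point of the shift is that on $\lcb f_\sigma>0\rcb\cap\lcb H>C_0R^{-1}\rcb$ one has, by Corollary \ref{cor:mconvexity} with $\varepsilon'<\delta$, the strict inequality $\overline k>\kappa_n$; that is, the supremum in \eqref{eq:inscribedk} is attained at a genuine point $y\ne x$, and the inscribed ball of radius $1/\overline k$ makes first-order contact with $M^n$ at both $x$ and $y$. On this set Andrews' two-point maximum principle is in force, and differentiating the smooth two-point function $k(x,y)$ as in \cite{Br15,HaKl15,La15} gives, in the viscosity sense,
\bann
(\pd_t-\Delta)f_\sigma\le\sigma|A|^2f_\sigma+\frac{2(1-\sigma)}{H}\inner{\cd H}{\cd f_\sigma}-\sigma(1-\sigma)\frac{f_\sigma}{H^2}|\cd H|^2+\mathcal{E}\,,
\eann
where $\mathcal{E}\le0$ collects the favourably signed contribution of the geometry at the second contact point $y$. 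Together with the gradient-term estimate (the analogue of Lemma \ref{lem:gradterm}) and the Poincar\'e-type inequality (the analogue of Proposition \ref{prop:Poincare}) --- which is where the quantitative $(m+1)$-convexity $\kappa_1+\dots+\kappa_{m+1}\ge\alpha H$ and the pinching of $\kappa_n$ onto $\frac{1}{n-m}H$ enter, read off the geometry near both $x$ and $y$ --- this produces, for $p$ large, $\sigma\approx p^{-1/2}$, and a threshold $k_0=k_0(n,\alpha,\Theta,\varLambda,\sigma)$,
\bann
\frac{d}{dt}\int_{M^n_t}(f_\sigma-k_0)_+^p\,d\mu_t\le-c(n)\,p\int_{M^n_t}(f_\sigma-k_0)_+^{p-2}|\cd f_\sigma|^2\,d\mu_t+C(n)\,p\int_{M^n_t}(f_\sigma-k_0)_+^{p}\,d\mu_t\,.
\eann

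Feeding this into the Michael--Simon Sobolev inequality and running the Stampacchia iteration exactly as in \S\ref{sec:pinching} bounds $\sup_{M^n\times[t_0,T)}f_\sigma$ in terms of $n,\alpha,\Theta,\varLambda,\sigma$ and $R$. Unwinding the definition of $f_\sigma$ gives $\overline k-\lb\frac{1}{n-m}+\delta\rb H\le C\,H^{1-\sigma}+(\text{lower order})$; Young's inequality absorbs $C\,H^{1-\sigma}$ into $\frac{\varepsilon}{2}H+C_\varepsilon R^{-1}$, and choosing $\delta=\frac{\varepsilon}{2}$ and $\sigma=\sigma(\varepsilon)$ small enough yields $\overline k-\frac{1}{n-m}H\le\varepsilon H+C_\varepsilon R^{-1}$, as claimed.

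The step I expect to be the main obstacle is the gradient-term estimate for the non-smooth $\overline k$, i.e.\ producing the $|\cd f_\sigma|^2$ term above with the correct constant; this is precisely what forces a definite gradient gain away from the $m$-cylindrical locus, on which $\overline k-\frac{1}{n-m}H$ vanishes. Establishing it requires combining the Hessian information extracted from the two-point maximum principle at an interior maximum with hypothesis (3) and the $\kappa_n$-estimate, and carefully tracking the induced geometry at \emph{both} contact points of the inscribed ball; making this bookkeeping rigorous in the viscosity setting is the delicate point. The remaining ingredients --- the a priori bounds, the evolution of $f_\sigma$, and the Stampacchia iteration --- are routine adaptations of \S\ref{sec:pinching} and of \cite{Br15,HaKl15}.
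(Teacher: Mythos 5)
Your proposal has the correct overall architecture and correctly identifies the key reduction: use the $\kappa_n$-estimate from Corollary \ref{cor:mconvexity} to confine the analysis to the regime $\overline k>\kappa_n$ (where the two-point maximum principle applies), and then run a Stampacchia iteration on a weighted version of $\overline k-\tfrac{1}{n-m}H$. Your device of shifting by $\delta H$ to enter that regime is a legitimate variant of what the paper does (it subtracts a constant $K$ from $\Ges$ so that on $\{\Gesk\ge 0\}$ one has $\overline k-\kappa_n\ge \tfrac{\varepsilon}{2}H$); both work. However, there are two concrete gaps in how you have set up the iteration.

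First, the function you propose to iterate, $f_\sigma=(G_\delta)_+H^{\sigma-1}$, is built only from $\overline k$ and $H$. Its differential inequality will produce a gradient term of the form $-c\,f_{\sigma,+}^{p}\frac{|\cd\overline k|^2}{G_\delta^2}$ (via the two-point gradient term and Lemma \ref{lem:kQestmconvex}), but it will \emph{not} produce the second gradient term $-c\,f_{\sigma,+}^{p}\frac{|\cd A|^2}{H^2}$. That term is essential, and the paper manufactures it by replacing $G_1=\overline k-\tfrac{1}{n-m}H$ with the composite quantity $G=G_1^2/G_2$, where $G_2=2LH-|A|$: the evolution of $G_2$ together with \eqref{eq:evolvenormA} and Lemma \ref{lem:gradterm} supplies precisely the missing $|\cd A|^2/H^2$ term. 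Without this modification the Poincar\'e inequality cannot be absorbed.

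Second, and more fundamentally, the needed Poincar\'e inequality is \emph{not} an ``analogue of Proposition \ref{prop:Poincare}'' obtained by tracking the geometry at both contact points, as you suggest; it rests on a genuine Simons-type differential inequality for $\overline k$, namely Proposition \ref{prop:kSimons}:
\begin{align*}
\tfrac{1}{2}H\leq \Div\lb W^2\cd\overline k\rb-\inner{W}{\cd_{W^2\cd\overline k}A}+\tfrac{1}{2}\vert W\cd\overline k\vert^2\tr(W)\,,\qquad W:=(\overline k\,\mathrm{I}-A)^{-1}\,,
\end{align*}
valid in the viscosity and distributional senses on $\{\overline k>\kappa_n\}$. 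Deriving this requires differentiating the two-point function $k(x,y)$ twice along $\pd_{x^i}+\Lambda_i{}^p\pd_{y^p}$, using Lemma \ref{lem:ytangent} and the reflection relation \eqref{eq:reflectingtangentplanes} to identify the tangent plane and basis at the second contact point, and then optimizing the matrix $\Lambda$ (the choice $\Lambda=XY^{-1}$) to extract the zero-order lower bound; only after that does the integration by parts in Proposition \ref{prop:kPoincaremconvex} yield the Poincar\'e inequality, and it is at exactly this integration-by-parts stage that the $|\cd A|^2/H^2$ gradient term from the first point becomes indispensable (to absorb $\inner{W}{\cd_{W^2\cd\overline k}A}$ via Young). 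You correctly flag this step as the delicate one, but you do not supply the identity that makes it go; as written, the ``favourably signed error $\mathcal{E}\leq 0$'' and the unspecified Poincar\'e inequality are placeholders for what is in fact the mathematical substance of the proof.
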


We note that in \cite{Br15}, the $m=n-1$ case of \eqref{eq:mconvexity} (which follows from the convexity estimate \eqref{eq:convexity}) is used to reduce to the `interior' case that the supremum in \eqref{eq:inscribedk} is attained (recall that $\overline k=\kappa_n$ otherwise). The $m$-convexity estimates \eqref{eq:mconvexity} play this role 
 in our proof.

One advantage of the Stampacchia iteration argument is that it requires only one-sided non-collapsing (whereas the proof described in \cite{La15} makes use of the techniques of \cite{HK1}, which fundamentally require two-sided non-collapsing). In a separate article
, with Lynch, we show how to obtain analogous estimates for flows by non-linear functions of curvature, where, in general, only one-sided non-collapsing holds.

\subsection{Ancient solutions}

In the second part of the paper, we consider \emph{ancient solutions} of \eqref{eq:MCF}. These are solutions of \mcf which are defined for time intervals $I$ of the form $(-\infty,T)$ with $T\leq \infty$. For compact $M^n$, we can assume without loss of generality that $T=1$ is the maximal existence time. In principle, this property should be extremely rigid, since diffusion has had an arbitrarily long time to take effect. Indeed, when $n=1$, the only compact, convex, embedded ancient solutions are shrinking circles and Angenent ovals \cite{DHS10}. For $n\geq 2$, an analogous classification remains open; however, some recent breakthroughs have been made. For instance, given any $\alpha>0$, the shrinking sphere is the only $\alpha$-\emph{non-collapsing} (that is, $H>0$ and $-\alpha H\leq \underline k\leq\overline k\leq \alpha H$) ancient solution which is either uniformly convex or of type-I curvature growth (that is, $\limsup_{t\to-\infty}\sqrt{1-t}\max_{M^n\times\{t\}}H<\infty$) \cite{HaHe}. 
In fact, the same statement is true for ancient solutions in dimensions $n\geq 2$ when the non-collapsing condition is replaced by \emph{convexity} \cite{HuSi15}. The proof of the latter result is based on a clever modification of the proof of Huisken's umbilic estimate. The same idea applies to the cylindrical estimates \cite{HuSi15}, so that convex, uniformly $(m+1)$-convex ancient solutions satisfy
\ba\label{eq:mcylindricalancient}
|A|^2-\frac{1}{n-m}H^2\leq 0\,.
\ea
Moreover, arguing via the strong maximum principle, it is shown that strict inequality holds unless $m=1$ (in which case the solution is necessarily the shrinking sphere).

Adapting the argument of Huisken--Sinestrari, we are able to obtain a sharp pinching estimate for ancient solutions, as long as the solution has \emph{bounded rescaled volume}; that is,
\ba\label{eq:brv}
\limsup_{t\to-\infty}\frac{1}{(-t)^{n+1}}\int_t^0\hspace{-3mm}\int_{M^n}H(\cdot,s)\,d\mu_s\,ds<\infty\,.
\ea
Note that, when the evolving hypersurfaces $M^n_t$ are mean convex and bound precompact regions $\Omega_t\subset \R^{n+1}$,
\bann
|\Omega_t|=|\Omega_0|+\int_t^0\hspace{-3mm}\int_{M^n}H(\cdot,s)\,d\mu_s\,ds\,.
\eann

\begin{thm}\label{thm:pinchingancient}
Fix a dimension $n\in \N\setminus\{1\}$ and a pinching condition $\Gamma\subset \mathcal{S}(n)$ and denote by $\Lambda$ the convex hull of the cylindrical points lying in $\Gamma$. Let $X:M^n\times(-\infty,1)\to\R^{n+1}$ be a compact ancient solution of \mcf with bounded rescaled volume. Suppose, in addition, that the solution is uniformly pinched, in the sense that
\bann
A_{(x,t)}\in \overline\Gamma_0\quad\text{for all}\quad (x,t)\in M^n\times(-\infty,0]
\eann
for some pinching condition $\Gamma_0$ satisfying $\overline\Gamma_0\setminus\{0\}\subset \Gamma$. Then
\bann
A_{(x,t)}\in \Lambda\quad\text{for all}\quad (x,t)\in M^n\times(-\infty,1)\,.
\eann
Moreover, if $M^n$ is connected, then $A_{(x,t)}\in \mathrm{int}(\Lambda)$ for all $(x,t)\in M^n\times(-\infty,1)$ unless $\Lambda$ is the umbilic ray and $M^n_t$ the shrinking sphere $S^n_{\sqrt{2n(1-t)}}$.
\end{thm}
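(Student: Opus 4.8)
The plan is to combine the pinching principle (Theorem \ref{thm:pinching}) with the Huisken--Sinestrari trick for passing from a finite-time estimate with constants to a sharp estimate on ancient solutions, using the bounded rescaled volume hypothesis to control the geometric scales appearing in the constant $C_\varepsilon$. First, fix $\varepsilon>0$. For each $\tau>0$ I would apply Theorem \ref{thm:pinching} to the restricted solution $X:M^n\times[-\tau,1)\to\R^{n+1}$, with $t_0=-\tau$, $T=1$: conditions (2) and (3) hold with $\Theta$ and $\Gamma_0$ independent of $\tau$ by the uniform pinching hypothesis and the fact that $H$ is bounded on $M^n\times\{-\tau\}$ (one has to be slightly careful here---$\Theta$ must be chosen so that $\max_{M^n\times\{t_0\}}H\leq\Theta R^{-1}$, which is where the rescaled volume bound enters). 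Denote by $R_\tau$ the scale forced by condition (1), namely $R_\tau=\tfrac12\big[(\mu_{-\tau}(M^n)/\sigma_n)^{1/n}+\sqrt{2n(1+\tau)}\big]$. Theorem \ref{thm:pinching} then gives
\bann
\dist(A_{(x,t)},\Lambda)\leq \varepsilon H(x,t)+C_\varepsilon R_\tau^{-1}\qquad\text{for all }(x,t)\in M^n\times[-\tau,1).
\eann
The key point is that if $R_\tau\to\infty$ as $\tau\to\infty$, then fixing any $(x,t)$ and letting $\tau\to\infty$ yields $\dist(A_{(x,t)},\Lambda)\leq\varepsilon H(x,t)$, and since $\varepsilon>0$ was arbitrary, $A_{(x,t)}\in\Lambda$ (using that $\Lambda$ is closed, being the convex hull of the bounded-direction set of cylindrical points, or rather its closure---one should check $\Lambda$ is closed, which holds since the cylindrical points in $\Gamma$ form, after normalization, a compact set).

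The crux is therefore verifying $R_\tau\to\infty$, equivalently that $\mu_{-\tau}(M^n)\to\infty$ or $\sqrt{\tau}\to\infty$; the latter is automatic, so in fact $R_\tau\geq c\sqrt{\tau}\to\infty$ with no further work, and the volume term is not even needed for this step. \emph{This is where I expect the real subtlety}: the bounded rescaled volume hypothesis \eqref{eq:brv} is not needed to make $R_\tau$ large, but it \emph{is} needed to control condition (2)---we need $\max_{M^n\times\{-\tau\}}H\leq\Theta R_\tau^{-1}$ with $\Theta$ \emph{fixed}, i.e. $\max H$ on the time slice $\{-\tau\}$ must decay like $\tau^{-1/2}$. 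The bounded rescaled volume condition, together with the uniform pinching (which bounds $|A|^2\leq c H^2$ and hence makes the solution uniformly regular at its natural scale via Huisken's monotonicity and interior estimates), should force type-I curvature decay $\limsup_{\tau\to\infty}\sqrt{\tau}\max_{M^n\times\{-\tau\}}H<\infty$; this is essentially the argument of Huisken--Sinestrari \cite{HuSi15}, which I would reproduce. Thus the genuinely hard step is establishing this type-I bound from \eqref{eq:brv}: one uses that uniform pinching plus the rescaled volume bound gives a uniform bound on Huisken's Gaussian density ratios as $t\to-\infty$, hence on a suitable backward rescaling a non-flat, uniformly pinched self-shrinker or ancient limit, whose compactness and the pinching force the $\sqrt{\tau}$-scaling of the curvature maximum.

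For the moreover statement, I would run the strong maximum principle exactly as in the finite-time pinching principle. By Proposition \ref{prop:evolveG}, the signed distance $G:=-\dist(A,\partial\Gamma')$ to the boundary of a suitable convex cone $\Gamma'$ adapted to $\Lambda$ (with $\Lambda=\{W:G(W)\geq 0\text{ appropriately}\}$, or more precisely working with each supporting halfspace of $\Lambda$ that contains a cylindrical point in its boundary) is a supersolution of the Jacobi equation \eqref{eq:evolveH}; since we have just shown $A_{(x,t)}\in\Lambda$ for all $t$, the relevant support function $\ell(A)-0$ is $\geq 0$ and satisfies $(\partial_t-\Delta)(\cdots)\geq|A|^2(\cdots)$ (or the reverse, arranged so the strong maximum principle applies to a nonnegative supersolution of a Jacobi-type equation). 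If $M^n$ is connected and $A_{(x_0,t_0)}\in\partial\Lambda$ for some $(x_0,t_0)$, the strong maximum principle forces $A_{(x,t)}\in\partial\Lambda$ for all $t\leq t_0$, hence the Weingarten curvature lies in a fixed face of $\partial\Lambda$ for all earlier times. If that face is the umbilic ray, then $A=\tfrac{1}{n}H\,\mathrm{I}$ identically for $t\leq t_0$, and Huisken's theorem (or direct ODE analysis of \eqref{eq:evolveH} under umbilicity, giving $M^n_t$ a shrinking sphere) identifies the solution as $S^n_{\sqrt{2n(1-t)}}$; if the face is not the umbilic ray, it contains a null eigenvalue direction, and propagating this via the strong maximum principle applied to the smallest eigenvalue $\kappa_1$ (which then satisfies $\kappa_1\equiv 0$ on an open set of directions, forcing a splitting $\R^k\times$ lower-dimensional flow by the splitting theorem of Hamilton-type for hypersurfaces) produces a proper shrinking cylinder factor---but a \emph{compact} ancient solution cannot have a Euclidean factor, contradiction, unless the cylinder has no $\R$-factor, i.e. $\Lambda$ is the umbilic ray. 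Hence $A_{(x,t)}\in\mathrm{int}(\Lambda)$ for all $(x,t)$ unless $\Lambda$ is the umbilic ray and $M^n_t$ is the shrinking sphere. The one technical caveat is that the splitting argument in the non-umbilic case requires $m=1$ (the shrinking-sphere case is $m=1$, $n-m=n-1$... let me re-examine: the umbilic ray corresponds to $m=0$); for $m\geq 1$ the face contains genuine null directions and the splitting produces a noncompact factor, ruled out by compactness---so the only surviving boundary case is $m=0$, the umbilic ray, with the sphere.
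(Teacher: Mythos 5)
Your proposal takes a genuinely different route from the paper, and it contains a gap that I do not think you can close without essentially proving the theorem by another method.

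Your plan is to apply the finite-time pinching theorem (Theorem~\ref{thm:pinching}) to the restrictions $X|_{M^n\times[-\tau,1)}$ and let $\tau\to\infty$. You correctly identify that $R_\tau\gtrsim\sqrt\tau\to\infty$ for free, and you correctly identify the real obstacle: condition (2) requires $\max_{M^n\times\{-\tau\}}H\leq\Theta R_\tau^{-1}$ with a \emph{fixed} $\Theta$, i.e.\ type-I curvature decay $\max_{M^n\times\{-\tau\}}H\lesssim\tau^{-1/2}$. But you then assert that this follows from bounded rescaled volume together with uniform pinching ``via Huisken's monotonicity and interior estimates'' and ``essentially the argument of Huisken--Sinestrari.'' That is not what Huisken--Sinestrari do, and the implication you need is not available: bounded rescaled volume is a space-time $L^1$ bound on $H$, and there is no general mechanism to upgrade this to a pointwise decay rate. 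The paper's own Lemma~\ref{lem:weakHbound} establishes implications in the \emph{opposite} direction (type-I or $L^n$ bounds on $H$ imply bounded rescaled volume). Worse, note that type-I decay is item (6) of Corollary~\ref{cor:shrinkingsphere}, which is deduced \emph{from} Theorem~\ref{thm:pinchingancient}; so if your route to a type-I bound ultimately runs through that equivalence, the argument is circular.

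The paper's proof of Theorem~\ref{thm:pinchingancient} avoids this issue entirely by never reducing to the finite-time theorem. Instead it revisits the integral inequalities \eqref{eq:k0estmconvexity} and \eqref{eq:Poincareancient} directly, choosing $\sigma$ a bit smaller so as to \emph{retain} a negative term $-\sigma p\int G_{\varepsilon,\sigma,+}^p|A|^2\,d\mu$ on the right hand side. One then interpolates (using $G_{\varepsilon,\sigma}\leq H^\sigma$) to derive an ODE of the form $\frac{d}{dt}\varphi\leq-\varphi^{1+\beta}\psi^{-\beta}$ for $\varphi(t)=\int G_{\varepsilon,\sigma,+}^p\,d\mu$ and $\psi(t)=\int H\,d\mu$, where $\beta=2/(\sigma p-1)$. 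Integrating $\frac{d}{dt}\varphi^{-\beta}\geq\beta\psi^{-\beta}$ from $s$ to $\tau$ and applying H\"older gives $\varphi^\beta(\tau)\lesssim(1-s)^{\beta(n+1)}/(\tau-s)^{1+\beta}$, and because $\sigma p>2n+1$ forces $n\beta<1$, the right hand side tends to zero as $s\to-\infty$. This is the Huisken--Sinestrari trick, applied at the integral level; it is precisely the step at which bounded rescaled volume enters, and it bypasses any need for a pointwise curvature decay rate.

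Your sketch of the rigidity (``moreover'') statement is in the right spirit---strong maximum principle for the signed distance, splitting off a Euclidean factor, compactness contradiction in the $m\geq 1$ case---and it agrees in substance with Proposition~\ref{prop:splitting}, which is what the paper cites at this step.
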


We note that it was left open in \cite{HuSi15} whether or not the techniques apply to the convexity estimate. The difficulty appears to arise in the induction step in the proof of the convexity estimate in \cite{HuSi99b}: In order to start the $(k+1)$-st step, we require uniform pinching $H_{k}\geq \alpha H^k$ for some $\alpha>0$, where $H_k$ is the $k$-th mean curvature; however, the conclusion of the $k$-th step only yields strict pinching $H_k>0$. Under the assumption of bounded rescaled volume, the desired estimate is an immediate corollary of Theorem \ref{thm:pinchingancient}.

\begin{cor}\label{cor:convexityancient}
Fix $n\in \N\setminus\{1\}$ and let $X:M^n\times(-\infty,1)\to\R^{n+1}$ be a compact, mean convex ancient solution of \mcf with bounded rescaled volume. Suppose, in addition, that
\bann
\liminf_{t\to-\infty}\frac{\kappa_1}{H}>-\infty\,.
\eann
Then the solution is strictly convex for all $t\in(-\infty,1)$.
\end{cor}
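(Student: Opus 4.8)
The plan is to deduce this directly from Theorem \ref{thm:pinchingancient} by choosing the pinching cones appropriately. I would first record two elementary preliminaries. Since $M^n$ is compact it admits no minimal immersion into $\R^{n+1}$ (the coordinate functions would be bounded harmonic functions, hence constant), so the strong maximum principle applied to \eqref{eq:evolveH} forces $H>0$ everywhere on $M^n\times(-\infty,1)$: if $H$ vanished at a point it would vanish identically on a backward-in-time slab, making those time-slices minimal. Next, the hypothesis $\liminf_{t\to-\infty}\kappa_1/H>-\infty$ supplies $C_1<\infty$ and $T_1<0$ with $\kappa_1\geq-C_1H$ on $M^n\times(-\infty,T_1]$, and since $\kappa_1/H$ is continuous on the compact set $M^n\times[T_1,0]$ (on which $H>0$) it is bounded below there; enlarging the constant, there is thus a nonnegative $C<\infty$ with $\kappa_1\geq-CH$ on all of $M^n\times(-\infty,0]$.

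Next I would set $\Gamma:=\{W\in\mathcal{S}(n):\tr W>0\}$ and $\Gamma_0:=\{W\in\mathcal{S}(n):\tr W>0\text{ and }\kappa_1(W)+(C+1)\tr W>0\}$, where $\kappa_1(W)$ denotes the least eigenvalue of $W$. Both contain $\mathrm{I}$, and both are open, convex, $O(n)$-invariant cones: $\tr$ is a linear $O(n)$-invariant functional, while $W\mapsto\kappa_1(W)=\min_{|v|=1}\langle Wv,v\rangle$ is a concave, $O(n)$-invariant, positively $1$-homogeneous function, so $\Gamma$ and $\Gamma_0$ are intersections of open superlevel sets of such functions; hence both are pinching conditions. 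If $W\in\overline{\Gamma_0}\setminus\{0\}$ then $\tr W\geq0$ and $\kappa_1(W)\geq-(C+1)\tr W$; were $\tr W=0$ we would get $\kappa_1(W)\geq0$, so all eigenvalues of $W$ would be nonnegative with vanishing sum, forcing $W=0$, a contradiction; hence $\tr W>0$ and $\overline{\Gamma_0}\setminus\{0\}\subset\Gamma$. Finally, by the first paragraph $\tr A=H>0$ and $\kappa_1(A)+(C+1)\tr A=\kappa_1+(C+1)H\geq H>0$ on $M^n\times(-\infty,0]$, so $A_{(x,t)}\in\Gamma_0\subset\overline{\Gamma_0}$ there; thus Theorem \ref{thm:pinchingancient} applies with these $\Gamma$ and $\Gamma_0$.

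It remains to identify $\Lambda$, the convex hull of the cylindrical points lying in $\Gamma$, with the cone $\mathcal P$ of positive semidefinite endomorphisms of $\R^n$. Every cylindrical $W\in\Gamma$ has only the eigenvalues $0$ and a single positive number, hence is positive semidefinite, so $\Lambda\subseteq\mathcal P$. Conversely, writing $W\in\mathcal P\setminus\{0\}$ in an orthonormal eigenbasis $v_1,\dots,v_n$ with eigenvalues $\lambda_i\geq0$ exhibits $W=\sum_i\tfrac{\lambda_i}{\tr W}\big(\tr W\,v_i\otimes v_i\big)$ as a convex combination of the rank-one endomorphisms $\tr W\,v_i\otimes v_i$, each of which is cylindrical (with $(n-1)$-fold null eigenvalue) and of positive trace, hence lies in $\Gamma$; so $\mathcal P\setminus\{0\}\subseteq\Lambda\subseteq\mathcal P$. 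Since $\tr A=H>0$, Theorem \ref{thm:pinchingancient} now gives $A_{(x,t)}\in\Lambda\subseteq\mathcal P$ for all $(x,t)\in M^n\times(-\infty,1)$, i.e.\ the solution is weakly convex. To upgrade this to strict convexity I would pass to a connected component of $M^n$ --- itself a compact ancient solution with bounded rescaled volume and $\liminf_{t\to-\infty}\kappa_1/H>-\infty$, so the above applies to it verbatim --- and invoke the `moreover' clause of Theorem \ref{thm:pinchingancient}: since $\Lambda$ is plainly not the umbilic ray, we conclude $A_{(x,t)}\in\mathrm{int}(\Lambda)$ for all times, and as $\mathcal P\setminus\{0\}\subseteq\Lambda\subseteq\mathcal P$ with $0\in\partial\mathcal P$ this interior is the cone of positive definite endomorphisms; hence $\kappa_1>0$, i.e.\ the solution is strictly convex on $(-\infty,1)$. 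I do not expect a genuine obstacle here: Theorem \ref{thm:pinchingancient} carries all the analytic weight, and the remaining work is exactly the bookkeeping above --- establishing $H>0$, converting the asymptotic bound into a uniform one on $(-\infty,0]$, checking that $\Gamma$ and $\Gamma_0$ are legitimate pinching conditions, and recognising the convex hull of the cylindrical points of $\{\tr W>0\}$ as $\mathcal P$.
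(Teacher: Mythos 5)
Your proof is correct and follows exactly the route the paper intends: the paper presents Corollary \ref{cor:convexityancient} as an ``immediate corollary'' of Theorem \ref{thm:pinchingancient} without spelling out the choice of cones, and your argument supplies precisely the required bookkeeping --- establishing strict mean convexity via the strong maximum principle, converting the $\liminf$ hypothesis into a uniform bound on $(-\infty,0]$, choosing $\Gamma=\{\tr W>0\}$ and $\Gamma_0=\{\tr W>0,\ \kappa_1(W)+(C+1)\tr W>0\}$ so that $\overline\Gamma_0\setminus\{0\}\subset\Gamma$, identifying $\Lambda$ with the positive-semidefinite cone (minus the origin) via the spectral decomposition, and passing to connected components to invoke the rigidity clause and conclude $A\in\mathrm{int}(\Lambda)$, i.e.\ strict convexity.
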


The conditional bounded rescaled volume appears to be quite mild. Indeed, it holds automatically if $H$ is uniformly bounded in $L^n$ or $L^\infty$ for $t<0$ (see Lemma \ref{lem:weakHbound}). The latter is clearly true for type-I ancient solutions and, moreover, follows for convex ancient solutions from Hamilton's Harnack estimate \cite{HuSi15}. A similar estimate holds for (interior) non-collapsing solutions \cite{ShWa09} (see also \cite{HK1}). In their classification of embedded, closed, convex ancient solutions of the curve shortening flow, Daskalopoulos, Hamilton and \u{S}e\u{s}um show that bounds for the curvature in $L^1$ and $L^\infty$ are sufficient (and necessary) to deduce that a closed, embedded ancient solution is convex \cite{DHS10}. A sup-bound for the speed was assumed in the recent, very general, classification of convex ancient solutions of curvature flows in the sphere \cite{BIS} when the corresponding flow does not admit an appropriate Harnack estimate\footnote{It is tempting to conjecture that all ancient solutions should have bounded mean curvature as $t\to-\infty$; however, recent numerical evidence suggests that this is false \cite{Angenent}.}. In any case, this is already sufficient to weaken the convexity assumption in the rigidity result of Huisken and Sinestrari \cite{HuSi15} and (when $n\geq 2$) the non-collapsing assumption in the result of Haslhofer and Hershkovitz \cite{HaHe}.

\begin{cor}\label{cor:shrinkingsphere}
Fix $n\in \N\setminus\{1\}$ and let $X:M^n\times(-\infty,1)\to\R^{n+1}$ be a compact, connected, mean convex, embedded ancient solution of \mcf satisfying
\bann
\liminf_{t\to-\infty}\frac{\kappa_1}{H}>-\infty\,.
\eann
Then the following are equivalent:
\ben
\item $M_t^n$ is the shrinking sphere $S^n_{\sqrt{2n(1-t)}}$
\item $M^n_t$ is uniformly convex:
\[
\liminf_{t\to-\infty}\min_{M^n\times\{t\}}\frac{\kappa_1}{H}>0\,.
\]
\item $M^n_t$ has bounded rescaled diameter:
\[
\limsup_{t\to-\infty}\frac{\diam(M_t^n)}{\sqrt{1-t}}<\infty\,.
\]
\item $M^n_t$ has bounded eccentricity: 
\[
\limsup_{t\to-\infty}\frac{\rho_+(t)}{\rho_-(t)}<\infty\,,
\]
where $\rho_+(t)$ and $\rho_-(t)$ denote, respectively, the circum- and in-radii of $M_t^n$.
\item $M^n_t$ has bounded mean curvature ratios:
\[
\limsup_{t\to-\infty}\frac{\max_{M^n\times\{t\}}H}{\min_{M^n\times\{t\}}H}<\infty\,.
\]
\item $M^n_t$ has type-I curvature growth:
\[
\limsup_{t\to-\infty}\sqrt{1-t}\max_{M^n\times\{t\}}H<\infty\,.
\]
\item $M^n_t$ satisfies a reverse isoperimetric inequality:
\[
\limsup_{t\to-\infty}\frac{\mu_t(M)^{n+1}}{|\Omega_t|^n}<\infty\,.
\]
\een
\end{cor}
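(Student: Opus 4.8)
The plan is to establish the cycle of equivalences by showing that (1) trivially implies each of (2)--(7), while each of (2)--(7) forces the solution to have bounded rescaled volume \eqref{eq:brv}; granting the latter, Corollary \ref{cor:convexityancient} (its hypotheses are met, since $\liminf_{t\to-\infty}\kappa_1/H>-\infty$ is assumed) upgrades the solution to one that is strictly convex on all of $(-\infty,1)$, and for convex ancient solutions the equivalence of (1)--(7) follows from the rigidity theorems of Huisken--Sinestrari \cite{HuSi15} together with a few elementary reductions. That (1) implies (2)--(7) is immediate from the explicit geometry of $S^n_{\sqrt{2n(1-t)}}$, on which $\kappa_1/H\equiv\tfrac1n$, $H$ is spatially constant, $\rho_+=\rho_-=\tfrac12\diam(M^n_t)$, and $\mu_t(M^n)^{n+1}/|\Omega_t|^n$ equals the finite Euclidean value.

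For the bounded-rescaled-volume step, several cases are direct. Under (6), $H$ is bounded in $L^\infty$ for $t<0$, so Lemma \ref{lem:weakHbound} applies. Under (2), the bound $\kappa_1\geq\alpha H>0$ holds for all sufficiently negative $t$, so the solution is convex there and hence, by preservation of convexity, convex throughout; then Hamilton's Harnack inequality bounds $H$ for $t<0$ and Lemma \ref{lem:weakHbound} applies again. Under (3), $\Omega_t$ lies in a ball of radius $\leq\diam(M^n_t)\leq C\sqrt{1-t}$, so $|\Omega_t|=O\big((1-t)^{(n+1)/2}\big)$ and $(-t)^{-(n+1)}\int_t^0\!\!\int_{M^n}H=(-t)^{-(n+1)}(|\Omega_t|-|\Omega_0|)\to0$. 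Under (5), I would compare with the circumscribed sphere: since $B_{\rho_+(t)}\supseteq\Omega_t$ must survive until the maximal time $1$, one has $\rho_+(t)\geq\sqrt{2n(1-t)}$, and at a point where $\partial B_{\rho_+(t)}$ touches $M^n_t$ the Weingarten tensor satisfies $A\leq\rho_+(t)^{-1}g$, so $\min_{M^n_t}H\leq n\rho_+(t)^{-1}\leq\sqrt{n/(2(1-t))}$; combined with (5) this is precisely a type-I bound, and (5) reduces to (6). Conditions (4) and (7) require more: for (7) I would deduce an $L^n$-bound on $H$ (whence Lemma \ref{lem:weakHbound} again applies) from the reverse isoperimetric inequality together with the Michael--Simon Sobolev inequality and the evolution identities $\tfrac{d}{dt}|\Omega_t|=-\int_{M^n}H$, $\tfrac{d}{dt}\mu_t(M^n)=-\int_{M^n}H^2$; for (4) one argues, in the spirit of \cite{HaHe}, that bounded eccentricity plus interior curvature estimates for mean convex flow again control $H$.

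Once strict convexity is in hand, the solution shrinks to a round point as $t\to1$ (Huisken \cite{Hu84}), which legitimises the comparisons needed to close the remaining implications among (3)--(7). Under (4), the evolved inscribed ball cannot persist to time $1$, forcing $\rho_-(t)\leq\sqrt{2n(1-t)}$ and hence $\diam(M^n_t)\leq2\rho_+(t)\leq2C\sqrt{2n(1-t)}$, i.e.\ (3); a bounded-rescaled-diameter convex ancient solution has type-I growth, since it then occupies a fixed ball of radius $\sim\sqrt{1-t}$ over a time interval of comparable length and obeys the interior curvature estimates; under (7), John's theorem (the reverse isoperimetric inequality for convex bodies) turns (7) into bounded eccentricity and hence into (3). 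In every case we are thus reduced to a convex ancient solution that is of type-I growth or uniformly convex, and the Huisken--Sinestrari rigidity theorem \cite{HuSi15} identifies it as $S^n_{\sqrt{2n(1-t)}}$, closing the cycle.

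The principal obstacle is the bounded-rescaled-volume step for the weakest hypotheses (4), (5), (7): (5) is dispatched cleanly by the circumscribed-sphere comparison, but (4) and (7) demand ruling out, respectively, local collapsing and an over-elongated blow-down, which is where interior curvature estimates and the isoperimetric machinery enter, paralleling the analysis of \cite{HaHe}. A secondary subtlety to keep track of is a potential circularity --- the shrinking-sphere comparisons producing bounded rescaled diameter assume the solution shrinks to a point, i.e.\ convexity --- which is resolved by the order of operations above: bounded rescaled volume (and hence, via Corollary \ref{cor:convexityancient}, convexity) is extracted first, by means that do not presuppose convexity.
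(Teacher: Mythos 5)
Your overall strategy is the same as the paper's: prove (1)\,$\Rightarrow$\,(2)--(7) trivially, show each of (2)--(7) forces bounded rescaled volume, invoke Corollary \ref{cor:convexityancient} (via Theorem \ref{thm:pinchingancient}) to obtain strict convexity, and then close the loop using the rigidity results of Huisken--Sinestrari \cite{HuSi15}. Cases (2), (3) and (6) are handled correctly and essentially as in the paper.

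However, your argument for case (5) contains a sign error that breaks it. At a point $p$ where the circumscribed sphere $\partial B_{\rho_+(t)}\supseteq\Omega_t$ touches $M^n_t$, the hypersurface $M^n_t$ lies \emph{inside} the sphere, so the Weingarten tensor there satisfies $A\geq\rho_+(t)^{-1}\mathrm{I}$, not $A\leq\rho_+(t)^{-1}\mathrm{I}$; this yields a lower bound $\max_{M^n_t}H\geq n/\rho_+(t)$, not the claimed upper bound on $\min_{M^n_t}H$. If one tries to repair this by using the inscribed ball instead (where the touching inequality does read $A\leq\rho_-(t)^{-1}\mathrm{I}$, so $\min H\leq n/\rho_-(t)$), the comparison principle gives the \emph{upper} bound $\rho_-(t)\leq\sqrt{2n(1-t)}$, and inserting this into $\min H\leq n/\rho_-(t)$ still goes the wrong way. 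The geometric touching argument simply does not deliver the needed inequality. The paper obtains $\min_{M^n\times\{t\}}H\leq C\sqrt{n/(2(1-t))}$ by the standard maximum-principle comparison applied to the Jacobi equation \eqref{eq:evolveH}: since $(\partial_t-\Delta)H^{-2}\leq-2/n$ and $\max H^{-2}\to 0$ as $t\to 1$, one integrates backwards in time. That is the argument you need; combined with the bounded-ratio hypothesis it gives (5)\,$\Rightarrow$\,(6), after which Lemma \ref{lem:weakHbound}(2) applies.

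Two smaller points. For case (4) you defer the inscribed-ball comparison $\rho_-(t)\leq C\sqrt{1-t}$ to the ``after strict convexity'' phase and, in the first pass, invoke interior curvature estimates in the spirit of \cite{HaHe}; this is unnecessary and risks the very circularity you flag. The comparison with shrinking spheres needs no convexity at all, so (4)\,$\Rightarrow$\,(3)\,$\Rightarrow$ bounded rescaled volume can be closed directly in the first pass, as the paper does. For case (7), the paper's argument is a short H\"older computation combining $\frac{d}{dt}|\Omega_t|=-\int H\,d\mu$ with $\frac{d}{dt}\mu_t(M^n)=-\int H^2\,d\mu$ to get $|\Omega_t|\leq C\sqrt{1-t}\,|\Omega_t|^{n/(n+1)}$ and hence $|\Omega_t|\leq C(1-t)^{(n+1)/2}$; your sketch via Michael--Simon and John's theorem points in a plausible direction but is considerably less direct and not carried out.
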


As a consequence of the convexity estimate, we also obtain a sharp estimate for the exscribed curvature, so long as the flow is exterior non-collapsing.

\begin{thm}\label{thm:exscribedancient}
Fix $n\in \N$ and let $X:M^n\times(-\infty,1)\to\R^{n+1}$ be a compact, mean convex ancient solution of \mcf with bounded rescaled volume. Suppose, in addition, that
\[
\liminf_{t\to-\infty}\min_{M^n\times\{t\}}\frac{\underline k}{H}>-\infty\,.
\]
Then $M^n_t$ bounds a strictly convex region for all $t\in (-\infty,1)$.
\end{thm}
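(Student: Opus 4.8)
The strategy is to reduce Theorem~\ref{thm:exscribedancient} to the ancient convexity estimate of Corollary~\ref{cor:convexityancient}, using nothing more than the elementary pointwise bound
\[
\underline k(x,t)\le\kappa_1(x,t)
\]
between the exscribed curvature and the smallest principal curvature. This holds at every point of an embedded hypersurface: by \eqref{eq:exscribedk}, $\underline k(x,t)=\inf_{y\ne x}k(x,y)$, and letting $y\to x$ in $M^n_t$ along a curve leaving $x$ in a principal direction realising $\kappa_1(x,t)$, a second order Taylor expansion of $k(x,y)=2\inner{x-y}{\nu(x)}_{\R^{n+1}}/\norm{x-y}^2_{\R^{n+1}}$ gives $k(x,y)\to\kappa_1(x,t)$, so the infimum cannot exceed $\kappa_1(x,t)$. (Equivalently, $\underline k(x,t)$ equals either an attained value $k(x,y)$ with $y\ne x$, which is itself $\le\kappa_1(x,t)$ by comparison with nearby points, or else $\kappa_1(x,t)$ outright.)

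Next, since the solution is compact, mean convex and non-stationary, the strong maximum principle applied to the Jacobi equation \eqref{eq:evolveH}, together with the nonexistence of closed minimal hypersurfaces of $\R^{n+1}$, forces $H>0$ everywhere. Dividing the bound above by $H$ and minimising over each time slice yields, for every $t<1$,
\[
\min_{M^n\times\{t\}}\frac{\kappa_1}{H}\ \ge\ \min_{M^n\times\{t\}}\frac{\underline k}{H}\,,
\]
so the exterior non-collapsing hypothesis of Theorem~\ref{thm:exscribedancient} improves to $\liminf_{t\to-\infty}\min_{M^n\times\{t\}}\kappa_1/H>-\infty$. For $n\ge2$ the hypotheses of Corollary~\ref{cor:convexityancient} are then all met, and we conclude that $A_{(x,t)}>0$ for every $(x,t)\in M^n\times(-\infty,1)$. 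The case $n=1$ is elementary and needs neither the collapsing nor the bounded-rescaled-volume hypothesis: a closed embedded mean convex curve has monotone tangent angle, hence bounds a convex region, and $\kappa=H>0$ follows from the same strong maximum principle argument, a compact curve being unable to split off a line.

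It remains to promote `strictly convex immersion' to `bounds a strictly convex region'. For each $t<1$, $M^n_t$ is compact with positive definite second fundamental form, so by Hadamard's ovaloid theorem (for $n\ge2$; for $n=1$ one invokes embeddedness, already presupposed in the definition of $\underline k$) it is embedded and bounds a compact convex body $\Omega_t\subset\R^{n+1}$; being uniformly convex, its boundary contains no line segment, and so $\Omega_t$ is strictly convex. As a byproduct one recovers the sharp exscribed estimate $\underline k\ge0$: for $x,y\in\partial\Omega_t$ the point $y$ lies in the supporting halfspace $\{z\in\R^{n+1}:\inner{z-x}{\nu(x)}_{\R^{n+1}}\le0\}$ of $\Omega_t$ at $x$, whence $k(x,y)\ge0$; this is approached asymptotically on the shrinking cylinder $\R^{n-1}\times S^1_{\sqrt{2(n-1)(1-t)}}$, on which $\underline k\equiv0$.

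The main point is that there is no genuinely new analytic difficulty in this argument: its entire weight rests on Corollary~\ref{cor:convexityancient}, and hence on Theorem~\ref{thm:pinchingancient} and the adaptation of the Huisken--Sinestrari method to ancient solutions. The only subtlety is structural: although \eqref{eq:evolveexscribedk} exhibits $\underline k$ as a supersolution of the Jacobi operator, a direct Stampacchia iteration for $\underline k$ in the ancient setting would force one to reprove Brendle-type gradient estimates for the two-point function $k(x,y)$, whereas passing through $\kappa_1$ via the cost-free inequality $\underline k\le\kappa_1$ avoids this entirely. Accordingly, I expect the verification of $\underline k\le\kappa_1$ and the handling of the low-dimensional cases to be the only steps demanding any real care.
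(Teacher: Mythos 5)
Your reduction via the elementary pointwise inequality $\underline k\le\kappa_1$ is correct and, in the connected case, does give a genuinely simpler route than the one the paper intends: the paper's one-line proof (``Combining the argument in \cite{Br15} with the convexity estimate of Corollary \ref{cor:convexityancient}\dots'') asks one to re-run the full Brendle-type Stampacchia iteration for the two-point quantity $\underline k$ in the ancient setting, in the same way that Theorem~\ref{thm:inscribedancient} re-runs it for $\overline k$, thereby obtaining $\underline k\ge 0$ directly. Your observation that $\underline k\le\kappa_1$, which follows from $\underline k(x)=\inf_{y\ne x}k(x,y)\le\liminf_{y\to x}k(x,y)=\kappa_1(x)$, lets you skip all of that and go straight to Corollary~\ref{cor:convexityancient}. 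Your handling of $H>0$ via the strong maximum principle, and of the $n=1$ case, is also fine.

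However, there is a genuine gap: your argument does not rule out the solution having more than one connected component, and this is precisely the delicate point that the paper's route is set up to handle. Corollary~\ref{cor:convexityancient} only yields $A>0$ componentwise, and Hadamard's theorem is a statement about a single connected hypersurface; if $M^n_t$ has components $M_1,\dots,M_k$ with $k\ge 2$, each bounds its own strictly convex body, and $M^n_t$ then does \emph{not} bound a single strictly convex region --- the conclusion of the theorem fails even though $A>0$ holds everywhere. (Note also that $A>0$ alone does not imply $\underline k\ge 0$ when there are several components: for $x\in M_1$ and $y\in M_2$ the quantity $k(x,y)$ can be negative.) The paper's approach avoids this problem automatically: establishing $\underline k\ge 0$ by iteration shows that at every point $x$ the half-space $\{z:\inner{z-x}{\nu(x)}\le 0\}$ contains all of $M^n_t$, so $\Omega_t$ has a supporting hyperplane at every boundary point and is therefore a single convex body with connected boundary; connectedness is a consequence, not a hypothesis. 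This is also exactly what justifies the remark immediately following the theorem, that any such ancient solution with more than one component must be exterior collapsing. To repair your argument you would need a separate step showing that the exterior non-collapsing hypothesis forces $M^n_t$ to be connected (for instance by estimating $k(x,y)$ at nearest points of distinct components and showing $\underline k/H\to-\infty$), at which point the remainder of your proof goes through.
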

In particular, any compact, mean convex ancient solution of \mcf with bounded rescaled volume and more than one connected component is exterior collapsing as $t\to-\infty$.


Another immediate corollary of Theorem \ref{thm:pinchingancient} is a sharp estimate for the largest principal curvature.
\begin{cor}\label{cor:mconvexityancient}
Fix $n\in \N\setminus\{1\}$ and let $X:M^n\times(-\infty,1)\to\R^{n+1}$ be a compact, mean convex ancient solution of \mcf with bounded rescaled volume. Suppose, in addition, that
\bann
\liminf_{t\to-\infty}\min_{M^n\times\{t\}}\frac{\kappa_1+\dots+\kappa_{m+1}}{H}>0
\eann
for some $m\in \{0,\dots,n-2\}$. Then
\bann
\lb\kappa_n-\frac{1}{n-m}H\rb(x,t)\leq 0
\eann
for all $(x,t)\in M^n\times(-\infty,1)$. Moreover, if $M^n$ is connected, then the inequality is strict, unless $m=0$ and $M^n_t$ the shrinking sphere $S^n_{\sqrt{2n(1-t)}}$.
\end{cor}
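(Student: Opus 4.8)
The plan is to derive Corollary \ref{cor:mconvexityancient} as a direct specialisation of Theorem \ref{thm:pinchingancient}, exactly as Corollary \ref{cor:mconvexity} was deduced from Theorem \ref{thm:pinching} in the compact case. First I would fix the pinching condition to be the strict $(m+1)$-convexity cone
\bann
\Gamma:=\lcb W\in\mathcal{S}(n):\kappa_1(W)+\dots+\kappa_{m+1}(W)>0\rcb\,,
\eann
which is an open, convex, $O(n)$-invariant cone, hence a legitimate pinching condition. The cylindrical points lying in $\Gamma$ are precisely those with null eigenspace of multiplicity $j\in\{0,\dots,m\}$ and positive eigenvalue of multiplicity $n-j$; as in the proof of Corollary \ref{cor:mconvexity}, each such point $W$ satisfies $W\leq\frac{1}{n-m}\tr(W)\id$ (the worst case being multiplicity exactly $m$), so the convex hull $\Lambda$ is contained in the closed convex cone $\lcb W:W\leq\frac{1}{n-m}\tr(W)\id\rcb$. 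Conversely, the $m$-cylindrical point $\R^m\times S^{n-m}$ lies in $\pd\Lambda$.

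Next I would verify the hypotheses of Theorem \ref{thm:pinchingancient}. The quantitative pinching assumption $\liminf_{t\to-\infty}\min_{M^n\times\{t\}}\frac{\kappa_1+\dots+\kappa_{m+1}}{H}>0$, together with the fact that a compact ancient solution has $H>0$ on $(-\infty,0]$ (mean convexity is preserved and strict mean convexity follows from the strong maximum principle, or else the solution is a static plane, excluded by compactness), gives a uniform bound of the form $\kappa_1+\dots+\kappa_{m+1}\geq\alpha H$ on $M^n\times(-\infty,0]$ for some $\alpha>0$. Choosing $\Gamma_0:=\lcb W:\kappa_1(W)+\dots+\kappa_{m+1}(W)>\tfrac{\alpha}{2}\tr(W),\ \tr(W)>0\rcb$ (or any convex $O(n)$-invariant cone with $\overline\Gamma_0\setminus\{0\}\subset\Gamma$ containing the relevant Weingarten curvatures), we obtain $A_{(x,t)}\in\overline\Gamma_0$ for all $(x,t)\in M^n\times(-\infty,0]$, as required. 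The bounded rescaled volume hypothesis is assumed directly. Theorem \ref{thm:pinchingancient} then yields $A_{(x,t)}\in\Lambda$ for all $(x,t)\in M^n\times(-\infty,1)$, and since $\Lambda\subset\lcb W:W\leq\frac{1}{n-m}\tr(W)\id\rcb$, evaluating on the top eigenvalue gives $\kappa_n-\frac{1}{n-m}H\leq 0$ everywhere.

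For the rigidity statement, I would again quote the second part of Theorem \ref{thm:pinchingancient}: if $M^n$ is connected, then either $A_{(x,t)}\in\mathrm{int}(\Lambda)$ for all $(x,t)$, or $\Lambda$ is the umbilic ray and $M^n_t$ is the shrinking sphere. In the first case interiority of $\Lambda$ forces the strict inequality $\kappa_n<\frac{1}{n-m}H$, since $\lcb W:\kappa_n(W)=\frac{1}{n-m}\tr(W)\rcb$ is contained in the boundary of the cone $\lcb W:W\leq\frac{1}{n-m}\tr(W)\id\rcb$ and hence meets $\Lambda$ only in $\pd\Lambda$. In the second case, $\Lambda$ being the umbilic ray means that the only cylindrical point in $\Gamma$ is (a multiple of) the identity, i.e. the null-multiplicity-$j$ cylindrical points with $1\leq j\leq m$ are absent from $\Gamma$; inspecting the $(m+1)$-convexity cone, a cylindrical point $\R^j\times S^{n-j}$ lies in $\Gamma$ for every $j\leq m$, so the only way for $\Lambda$ to degenerate to the umbilic ray is $m=0$. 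This gives exactly the stated exception.

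The only genuine subtlety — and the step I would be most careful with — is the reduction from the asymptotic pinching $\liminf_{t\to-\infty}(\kappa_1+\dots+\kappa_{m+1})/H>0$ to a uniform bound valid on all of $M^n\times(-\infty,0]$, which is what Theorem \ref{thm:pinchingancient} requires: the $\liminf$ only controls the pinching near $t=-\infty$, so one must argue that on any compact time interval $[t_1,0]$ the quotient is bounded below (by compactness of $M^n\times[t_1,0]$ and strict $(m+1)$-convexity there, which is preserved from a slightly earlier time by the strong maximum principle and the fact that, once $\kappa_1+\dots+\kappa_{m+1}>0$ holds somewhere, it holds everywhere on a connected time-slice), and then take the minimum of the two bounds. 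Everything else is a formal translation of the proof of Corollary \ref{cor:mconvexity} into the ancient setting.
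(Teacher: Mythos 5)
Your proposal is correct and follows the paper's intended route: Corollary \ref{cor:mconvexityancient} is obtained by applying Theorem \ref{thm:pinchingancient} with $\Gamma$ the $(m+1)$-convexity cone, exactly mirroring the derivation of Corollary \ref{cor:mconvexity} from Theorem \ref{thm:pinching}, with Proposition \ref{prop:splitting} supplying the rigidity dichotomy. Two small points worth tightening: (i) the explicit cone $\Gamma_0=\lcb W:\kappa_1+\dots+\kappa_{m+1}>\tfrac{\alpha}{2}\tr(W),\ \tr(W)>0\rcb$ does not in fact satisfy $\overline\Gamma_0\setminus\{0\}\subset\Gamma$, since its closure contains nonzero $W$ with $\tr(W)=0$ and $\kappa_1+\dots+\kappa_{m+1}=0$; the standard fix is to note that uniform $(m+1)$-convexity together with $H>0$ gives $|A|\leq LH$ for some $L=L(n,m,\alpha)$ and to build the constraint $|W|\leq L\,\tr(W)$ into $\Gamma_0$, which forces $\tr(W)=0\Rightarrow W=0$; (ii) the passage from the $\liminf$ hypothesis to a uniform bound on all of $M^n\times(-\infty,0]$ is most cleanly done by choosing $t_1$ with $A_{(\cdot,t)}\in\overline\Gamma_0$ for $t\leq t_1$ and then invoking the tensor maximum principle (Corollary 2.4 of the paper) to propagate $\overline\Gamma_0$ forward, rather than the compactness/strong-maximum-principle argument you sketch, which as written conflates weak preservation with a spreading-of-positivity statement that is not needed here.
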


This allows us to obtain a sharp estimate for the inscribed curvature, so long as the flow is interior non-collapsing.

\begin{thm}\label{thm:inscribedancient}
Fix $n\in \N\setminus\{1\}$ and $m\in \{0,\dots,n-1\}$ and let $X:M^n\times(-\infty,1)\to\R^{n+1}$ be a compact, mean convex ancient solution of \mcf with bounded rescaled volume. Suppose, in addition, that
\[
\liminf_{t\to-\infty}\min_{M\times\{t\}}\frac{\kappa_1+\dots+\kappa_{m+1}}{H}>0
\]
and
\[
\limsup_{t\to-\infty}\max_{M\times\{t\}}\frac{\overline k}{H}<\infty\,.
\]
Then
\bann
\overline k(x,t)-\frac{1}{n-m}H(x,t)\leq 0
\eann
for all $(x,t)\in M^n\times(-\infty,1)$. Moreover, if $M^n$ is connected, then the inequality is strict, unless $m=0$ and $M^n_t$ the shrinking sphere $S^n_{\sqrt{2n(1-t)}}$.
\end{thm}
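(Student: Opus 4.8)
\textbf{Proof proposal for Theorem \ref{thm:inscribedancient}.}

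The plan is to deduce this statement from the ancient convexity-type estimate (Corollary \ref{cor:mconvexityancient}) in exactly the same way that the parabolic inscribed-curvature estimate (Theorem \ref{thm:inscribed}) is deduced from the $m$-convexity estimate (Corollary \ref{cor:mconvexity}), only now in the time-independent (ancient) regime where all additive error constants must vanish. First I would observe that the two hypotheses---uniform $(m+1)$-convexity pinching $\liminf_{t\to-\infty}\min(\kappa_1+\dots+\kappa_{m+1})/H>0$ and the interior non-collapsing bound $\limsup_{t\to-\infty}\max \overline k/H<\infty$---together with Corollary \ref{cor:mconvexityancient} applied on the time interval $(-\infty,0]$ give that on any finite sub-interval $[t_0,T)$ the solution satisfies hypotheses (1)--(4) of Theorem \ref{thm:inscribed} with constants $\Theta,\alpha,\varLambda$ that are \emph{uniform in $t_0$} (after a preliminary parabolic rescaling by $(-t_0)^{-1/2}$, or equivalently using the bounded rescaled volume assumption to control the normalized volume ratio in (1), exactly as in the derivation of Corollary \ref{cor:convexityancient} from Theorem \ref{thm:pinching}). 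Thus Theorem \ref{thm:inscribed} yields, for each fixed $\varepsilon>0$ and each late target time $t$,
\[
\Bigl(\overline k-\tfrac{1}{n-m}H\Bigr)(x,t)\leq \varepsilon H(x,t)+C_\varepsilon R_{t_0}^{-1}.
\]
Letting $t_0\to-\infty$ (so $R_{t_0}\to\infty$ after the rescaling) kills the additive term, and then letting $\varepsilon\to 0$ gives $\overline k\leq \tfrac{1}{n-m}H$ everywhere, which is the main inequality.

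For the rigidity statement I would run a strong-maximum-principle argument on the quantity $\tfrac{1}{n-m}H-\overline k$, which is now known to be a nonnegative supersolution of the linearized (Jacobi-type) inequality: combining the Jacobi equation \eqref{eq:evolveH} for $H$ with the differential inequality \eqref{eq:evolveinscribedk} for $\overline k$ gives $(\pd_t-\Delta)(\tfrac{1}{n-m}H-\overline k)\geq |A|^2(\tfrac{1}{n-m}H-\overline k)$ in the appropriate (viscosity) sense. On a connected solution, the strong maximum principle for such supersolutions (as used for the rigidity parts of Theorems \ref{thm:pinchingancient} and \ref{thm:exscribedancient}) forces either a strict inequality everywhere or $\overline k\equiv \tfrac{1}{n-m}H$ identically. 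In the equality case one first has $\kappa_n=\overline k\equiv\tfrac{1}{n-m}H$ (since $\overline k\geq\kappa_n$ always and $\kappa_n\geq\tfrac1{n-m}H$ is ruled out going the wrong way only in the degenerate case—here one uses that equality in $\overline k=\kappa_n$ together with the convexity forces the contact to be "at infinity", i.e.\ realized by the tangential direction), and then Corollary \ref{cor:mconvexityancient}'s own rigidity dichotomy applies: the curvature lies on the boundary $\pd\Lambda$ of the convex hull of cylindrical points, so either $m=0$ and $M^n_t$ is the shrinking sphere, or the solution splits off an $\R^m$ factor and is the shrinking cylinder $\R^m\times S^{n-m}_{\sqrt{2(n-m)(1-t)}}$—but on that cylinder $\overline k=\tfrac{1}{n-m}H$ requires the inscribed ball to touch at a point reached by the flat direction, which is impossible for an embedded cylinder with $m\geq 1$ (the inscribed curvature there equals $\kappa_n=\tfrac{1}{n-m}H$ only in the limit, not with contact at a genuine second point, and the strong maximum principle excludes this "non-generic" boundary case as in \cite{Br15}). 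Hence the only connected equality case is $m=0$ and the shrinking sphere.

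The main obstacle I anticipate is not the limiting argument (which is formal once Theorem \ref{thm:inscribed} is in hand) but the rigidity analysis at the boundary: one must carefully treat the "non-interior" alternative in \eqref{eq:inscribedk}, namely the possibility $\overline k(x,t)=\sup_{v\in T_xM\setminus\{0\}}A_x(v,v)/g_x(v,v)=\kappa_n(x,t)$, and show that in the equality case $\overline k\equiv\tfrac1{n-m}H$ this collapses to the situation already handled by Corollary \ref{cor:mconvexityancient}. Following Brendle's treatment in \cite{Br15}, the device is to show that if the supremum in \eqref{eq:inscribedk} were attained at a genuine second point $y\neq x$ on a solution with $\overline k\equiv\tfrac1{n-m}H$, the two-point maximum principle would propagate and force the whole hypersurface to be a round sphere, contradicting $m\geq 1$; and if instead $\overline k=\kappa_n$ identically, then $\kappa_n\equiv\tfrac1{n-m}H$ and one invokes the rigidity half of Corollary \ref{cor:mconvexityancient} verbatim. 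Spelling out the viscosity-sense strong maximum principle for $\overline k$ (which is only semiconvex/Lipschitz, not smooth) will require the two-point machinery of \cite{An12,ALM13,NC2}, but this is standard in this literature and no new ideas are needed beyond assembling the pieces.
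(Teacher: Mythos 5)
Your argument for the main inequality proceeds by applying the finite-time Theorem \ref{thm:inscribed} on truncated intervals $[t_0,0)$ and sending $t_0\to-\infty$, but this limiting argument has a genuine gap. The constant $C_\varepsilon$ in Theorem \ref{thm:inscribed} depends on $\Theta$, and hypothesis (2) there requires $\max_{M^n\times\{t_0\}}H\leq \Theta R^{-1}$. After a parabolic rescaling by $(1-t_0)^{-1/2}$ this becomes the \emph{type-I} condition $\sqrt{1-t_0}\max_{M^n\times\{t_0\}}H\leq\Theta$ uniformly as $t_0\to-\infty$, which is not implied by the hypotheses of Theorem \ref{thm:inscribedancient}: bounded rescaled volume is an $L^1$-type space-time bound on $H$ and does not give an $L^\infty$ bound (this is exactly why the paper records Lemma \ref{lem:weakHbound}, which derives bounded rescaled volume \emph{from} a type-I bound, not conversely). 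Likewise hypothesis (1) requires a bound on the rescaled \emph{surface area} $\mu_{t_0}(M^n)(1-t_0)^{-n/2}$, which also does not follow from bounded rescaled enclosed volume for merely mean-convex hypersurfaces. So the constants $R_{t_0}$, $\Theta$ cannot be taken uniform, and the additive term $C_\varepsilon R_{t_0}^{-1}$ has no reason to vanish in the limit. (Incidentally, the paper's Corollary \ref{cor:convexityancient} is obtained as an ``immediate corollary of Theorem \ref{thm:pinchingancient}'', i.e.\ of the ancient-solution estimate, not by limiting the finite-time Theorem \ref{thm:pinching}; so the analogy you invoke actually cuts the other way.)

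The paper's proof never passes through the finite-time estimate. Instead it runs the Huisken--Sinestrari ancient-solution variant of the $L^p$-argument directly on $(-\infty,1)$: using Corollary \ref{cor:mconvexityancient} to take $K=0$ in the definition of $\Gesk$, it obtains the strict differential inequality $\frac{d}{dt}\int\Gesp^p\,d\mu\leq -\sigma p\int\Gesp^p|A|^2\,d\mu$, and then, exactly as in Theorem \ref{thm:pinchingancient}, uses H\"older's inequality together with $n|A|^2\geq H^2$ and $\Ges\leq H^\sigma$ to relate $\int\Gesp^p\,d\mu$ to powers of $\int H\,d\mu$, which is precisely what the bounded rescaled volume controls. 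This $L^1$-in-$H$ mechanism is what your approach is missing. Your rigidity argument is essentially the paper's (strong maximum principle to propagate $\overline k\equiv\frac{1}{n-m}H$, then split on whether $\overline k=\kappa_n$ somewhere, using Proposition \ref{prop:splitting} in the first case and the negative gradient term of Lemma \ref{lem:evolvek} to force $\cd\overline k\equiv 0$, hence $\cd H\equiv 0$, in the second); your handling of the cylinder alternative, however, is more tangled than needed---Proposition \ref{prop:splitting} excludes the splitting by the \emph{compactness} of $M^n$, and no inscribed-ball contact analysis on the cylinder is required.
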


\section*{Acknowledgements}

This work has been discussed in the geometric analysis research seminar directed by Klaus Ecker at the Freie Universit\"at Berlin. I am grateful to the members of the geometric analysis group and to the students who attended this seminar for many useful comments. I am particularly indebted to Stephen Lynch for providing helpful comments on a draft of this paper. I am also grateful to Carlo Sinestrari for interesting discussions about ancient solutions of the mean curvature flow. Finally, I wish to acknowledge the financial support of the Alexander von Humboldt Foundation, whose fellowship made this work possible.

\section{Preliminaries}\label{sec:prelims}

In this section, we collect some background results which are needed for the proofs of the main theorems but may have a wider range of applicability. Particular results of interest are Propositions \ref{prop:evolveG}, \ref{prop:splitting}, \ref{prop:Poincare} and \ref{prop:kSimons}. 

We will begin with some evolution equations. So let $X:M^n\times I\to\R^{n+1}$ be a smooth  solution of \mcf for some compact manifold $M^n$. Then the family $\mu_t:=\mu(\cdot,t)$, $t\in I$, of measures induced by the immersions $X_t$ satisfy
\ba\label{eq:evolvemu}
\frac{d}{dt}\int \eta\, d\mu=\int \lb\pd_t\eta- \eta H^2\rb d\mu
\ea
for any $\eta\in C^\infty(M^n\times I)$, which is nothing more than the first variation formula for the area. In fact, \eqref{eq:evolvemu} holds for almost every $t$ for test functions which are only $L^1$ in space and $W^{1,1}$ in time.

Next, we consider functions of curvature. First, by a simple computation making use of \eqref{eq:evolveA}, we find
\ba\label{eq:evolvenormA}
(\pd_t-\Delta)|A|=|A|^2|A|-\frac{1}{2|A|^3}|A\otimes\cd A-\cd A\otimes A|^2
\ea
wherever $A\neq 0$. The gradient term on the right hand side will prove useful.




\begin{lem}[Cf. {\cite[Lemma 2.3]{Hu84}}]\label{lem:gradterm}
Given a dimension $n\in\N\setminus\{1\}$ and an open, $O(n)$-invariant cone $\Gamma_0\subset \mathcal{S}(n)$ whose closure does not contain the cylindrical point $\mathrm{diag}(0,\dots,0,1)$, there is a constant $\gamma>0$ with the following property: Given a smooth, strictly mean convex immersion $X:M^n\to\R^{n+1}$,
\bann
|A\otimes\cd A-\cd A\otimes A|^2\geq \gamma|A|^2|\cd A|^2
\eann
on the set $M_{0}^n:=\{x\in M^n:A_x\in \overline\Gamma_0\}$.
\end{lem}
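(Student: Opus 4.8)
The plan is to work pointwise on $M_0^n$ and reduce to a finite-dimensional linear algebra statement. Fix $x\in M_0^n$ and choose an orthonormal frame diagonalizing $A_x$, writing $A=\mathrm{diag}(\kappa_1,\dots,\kappa_n)$ with $\kappa_1\le\dots\le\kappa_n$; since $X$ is strictly mean convex and $A_x\in\overline\Gamma_0$ with $\Gamma_0$ a cone, we may normalize $|A|^2=\sum_i\kappa_i^2=1$. The tensor $T:=A\otimes\cd A-\cd A\otimes A$ has components $T_{ijk}=\kappa_i\cd_k A_{ij}-\cd_k A_{ij}\,\kappa_j=(\kappa_i-\kappa_j)\cd_k A_{ij}$ (using that $A$ is diagonal at $x$, so $(A\otimes\cd A)_{ijk}=A_{ip}\cd_k A_{pj}=\kappa_i\cd_k A_{ij}$). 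Hence
\[
|T|^2=\sum_{i,j,k}(\kappa_i-\kappa_j)^2(\cd_k A_{ij})^2,\qquad |\cd A|^2=\sum_{i,j,k}(\cd_k A_{ij})^2.
\]
So the claim is equivalent to showing there is $\gamma=\gamma(n,\Gamma_0)>0$ such that whenever $(\kappa_1,\dots,\kappa_n)\in\overline\Gamma_0$ with $\sum\kappa_i^2=1$, one has $(\kappa_i-\kappa_j)^2\ge\gamma$ for \emph{every pair} $i\ne j$ — except that this is clearly false (e.g.\ at a point with $\kappa_i=\kappa_j$). The resolution, and the point where Codazzi enters, is that the symmetry $\cd_k A_{ij}=\cd_i A_{kj}=\cd_j A_{ik}$ means that the components $\cd_k A_{ij}$ with $i,j,k$ \emph{not all equal} can be "rerouted": for such a triple at least two of the three indices differ, so the corresponding coefficient $(\kappa_i-\kappa_j)^2$ (or a permuted version) is controlled below as long as we never have three equal indices forcing genuinely diagonal derivatives $\cd_i A_{ii}$.

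Concretely, I would argue by contradiction and compactness. Suppose no such $\gamma$ exists; then there are frames/curvature tuples $\kappa^{(\ell)}\in\overline\Gamma_0$, $|A^{(\ell)}|=1$, and unit tensors $\cd A^{(\ell)}$ with $|T^{(\ell)}|^2\to0$. Passing to a subsequence, $\kappa^{(\ell)}\to\kappa\in\overline\Gamma_0$ with $|\kappa|=1$, and $\cd A^{(\ell)}\to B$, a nonzero symmetric-in-all-indices (Codazzi) tensor with $\sum_{i,j,k}(\kappa_i-\kappa_j)^2 B_{ijk}^2=0$. Thus $B_{ijk}=0$ unless $\kappa_i=\kappa_j$; by the full symmetry of $B$ this forces $B_{ijk}=0$ unless $\kappa_i=\kappa_j=\kappa_k$. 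Grouping the indices into the eigenspaces $E_\lambda=\{i:\kappa_i=\lambda\}$ of the limiting Weingarten map, $B$ is supported on $\bigsqcup_\lambda E_\lambda^{\times 3}$, i.e.\ $B$ is "block diagonal" along eigenspaces and nonzero on at least one block $E_\lambda$. Now I invoke a trace (Simons-type) identity: a nonzero totally symmetric $3$-tensor $B$ supported on a single eigenspace $E_\lambda$ must have nonvanishing trace $\sum_{i\in E_\lambda}B_{iij}$ for some $j\in E_\lambda$ — indeed for any nonzero totally symmetric tensor on a space of dimension $\ge1$ the contraction cannot vanish identically unless the dimension is large enough to support trace-free symmetric tensors, so this step actually needs the eigenspace $E_\lambda$ to be \emph{one-dimensional}, where $B$ reduces to a single number $B_{iii}\ne0$. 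To force that, I use the hypothesis: the only way a limit $\kappa\in\overline\Gamma_0$, $|\kappa|=1$, can have a repeated eigenvalue $\lambda$ whose eigenspace could carry a trace-free symmetric $3$-tensor is geometrically constrained, and — crucially — the excluded cylindrical point $\mathrm{diag}(0,\dots,0,1)$ is precisely the obstruction: if $B$ is supported purely on an eigenspace $E_\lambda$ with $\lambda\ne0$, rescaling shows the relevant configuration degenerates toward that cylindrical point, contradicting $\mathrm{diag}(0,\dots,0,1)\notin\overline\Gamma_0$.

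I expect the \textbf{main obstacle} to be making that last step precise — i.e.\ extracting from "$\overline\Gamma_0$ avoids $\mathrm{diag}(0,\dots,0,1)$" the quantitative exclusion of the bad limiting configurations. The cleanest route, following Huisken's original Lemma 2.3, is probably \emph{not} the pure compactness argument above but a direct one: split $|\cd A|^2 = \sum_{i\ \text{or}\ j\ \ne\ k}(\cd_k A_{ij})^2 + \sum_k(\cd_k A_{kk})^2$ and handle the two pieces separately. For the off-diagonal piece, each term $(\cd_k A_{ij})^2$ with $\{i,j,k\}$ not a singleton appears in $|T|^2$ with a coefficient $(\kappa_a-\kappa_b)^2$ for some distinguished pair; using Codazzi to always choose the pair realizing the largest gap, and using that $\overline\Gamma_0$ is a cone bounded away from $\mathrm{diag}(0,\dots,0,1)$ (so the two largest eigenvalues cannot both collapse relative to $|A|$), one bounds this piece by $C|T|^2$. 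For the diagonal piece $\sum_k(\cd_k A_{kk})^2$, Codazzi gives $\cd_k A_{kk}=\cd_i A_{ik}$ for any $i\ne k$, converting it into off-diagonal components already controlled — here one needs that for each $k$ there \emph{exists} $i\ne k$ with $(\kappa_i-\kappa_k)^2$ bounded below, which is exactly where the hypothesis on $\Gamma_0$ is used (the configuration where this fails for the top index is the cylindrical point being excluded). Assembling the pieces and absorbing constants yields $|\cd A|^2\le \gamma^{-1}|A|^{-2}|T|^2$ on $M_0^n$, with $\gamma$ depending only on $n$ and $\Gamma_0$ through $\mathrm{dist}(\overline\Gamma_0\cap\{|A|=1\},\mathrm{diag}(0,\dots,0,1))$.
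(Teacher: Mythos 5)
Your reformulation $T_{ijk}=(\kappa_i-\kappa_j)\cd_kA_{ij}$ misidentifies the tensor. The object $A\otimes\cd A-\cd A\otimes A$ in the evolution equation \eqref{eq:evolvenormA} is a genuine tensor product (a $2+3$-index tensor with components $A_{ij}\cd_kA_{pq}-\cd_kA_{ij}A_{pq}$), not the endomorphism composition $A\circ\cd_kA-\cd_kA\circ A$ that you compute. One can check this by matching \eqref{eq:evolvenormA} with the standard evolution equation for $|A|^2$: the gradient term must equal $\frac{|\cd A|^2-|\cd|A||^2}{|A|}$, which forces
\[
|A\otimes\cd A-\cd A\otimes A|^2 = 2|A|^2|\cd A|^2-2\sum_k\big(A^{ij}\cd_kA_{ij}\big)^2,
\]
i.e.\ the Cauchy--Schwarz deficit, whereas your $|T|^2=\sum_{ijk}(\kappa_i-\kappa_j)^2(\cd_kA_{ij})^2$ is a different, strictly smaller quantity. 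This matters: if $\cd A$ is supported on a single diagonal component $\cd_nA_{nn}$, your $|T|^2$ is identically zero (the coefficient $(\kappa_n-\kappa_n)^2$ kills it) while $|A|^2|\cd A|^2$ is not, so the lemma is \emph{false} for your $|T|^2$. By contrast the correct tensor gives $2\big(\sum_{p\neq n}\kappa_p^2\big)(\cd_nA_{nn})^2$, which is bounded below on $\overline\Gamma_0\cap\{|A|=1\}$ precisely because $\diag(0,\dots,0,1)\notin\overline\Gamma_0$ — this is exactly where the hypothesis enters.

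Your attempted repair in the last paragraph does not work either: you write ``Codazzi gives $\cd_kA_{kk}=\cd_iA_{ik}$ for any $i\neq k$,'' but Codazzi (total symmetry of $\cd A$) only permutes the three indices, and the triple $(k,k,k)$ is invariant under permutation, so the purely diagonal derivatives $\cd_kA_{kk}$ cannot be rerouted to off-diagonal components. Your compactness framing (pass to a limiting unit pair $(\kappa,B)$ and derive a contradiction) is the right skeleton and is what the paper does, but it must be applied to the \emph{full} tensor product: from $A\otimes B=B\otimes A$, i.e.\ $\kappa_p\delta_{pq}B_{ijk}=\kappa_i\delta_{ij}B_{pqk}$, one reads off directly (setting $p=q=n$ and using $\kappa_n>0$, then Codazzi) that $B$ is supported on $\cd_nA_{nn}$ alone and that $\kappa_i=0$ for all $i<n$, placing the limit at the excluded cylindrical point. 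No trace identity or eigenspace-dimension argument is needed.
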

\begin{proof}
Fix any $x\in M^n_0$ at which $|A||\cd A|\neq 0$ and rescale so that $|A||\cd A|=1$. By compactness of the set $\{(W,T)\in \overline \Gamma_0\times (\R^n\odot \R^n\odot\R^n):|W||T|=1\}$, where $\odot$ denotes the symmetric tensor product, it suffices to prove that
\bann
|A\otimes\cd A-\cd A\otimes A|^2>0
\eann
at $x$. Suppose, to the contrary, that
\bann
A\otimes\cd A=\cd A\otimes A\,.
\eann
In a principal frame, this becomes, after applying the Codazzi identity,
\ba\label{eq:gradterm1}
\kappa_p\delta_{pq}\cd_kA_{ij}=\kappa_i\delta_{ij}\cd_kA_{pq}
\ea
for each $p$, $q$, $k$, $i$ and $j$. By hypothesis, $\kappa_n>0$. Fix $k$, $i$ and $j$ so that $\cd_kA_{ij}\neq 0$. Then
\ba\label{eq:gradterm2}
\kappa_n\cd_kA_{ij}=\kappa_i\delta_{ij}\cd_kA_{nn}\,,
\ea
so that, in particular, $i=j$. By the Codazzi identity, the same argument 
implies $k=j$. Thus, $\cd_kA_{ij}$ is non-zero only if $k=i=j$. Returning to \eqref{eq:gradterm2}, we find
\bann
\kappa_n\cd_kA_{kk}=\kappa_k\cd_kA_{nn}
\eann
and conclude that $k=n$. That is, $\kappa_n\cd_kA_{ij}$ is non-zero only if $k=i=j=n$. On the other hand, for any $i\neq n$, \eqref{eq:gradterm1} yields
\bann
\kappa_i\cd_nA_{nn}=\kappa_n\cd_nA_{ii}=0\,.
\eann
It follows that $\kappa_i=0$ unless $i=n$. But this contradicts the hypothesis $A_x\in \overline \Gamma_0$.
\end{proof}

Next, we will consider the function which gives the distance of the curvature $A$ to the boundary of a pinching set $\Gamma\subset \mathcal{S}(n)$, but first we need to recall some facts from convex geometry: Given a convex subset $C\subset E$ of a finite dimensional normed linear space $E$, we recall that the signed distance to the boundary of $E$ is given by
\ba\label{eq:distance}
d_{C}(x)=\inf_{\ell\in \mathrm{S}C}\ell(x)\,,
\ea
where $\mathrm{S}C$ denotes the set of \emph{supporting affine functionals} for $C$; that is, the set of affine linear maps $\ell:E\to \R$ satisfying $\norm{D\ell}=1$ and $\ell(x)\geq 0$ for all $x\in C$ with equality at some $x_0\in \pd C$. We will also say that $\ell$ \emph{supports $C$ at $x_0$} and denote by $\mathrm{S}_{x_0}C$ the set of supporting affine functionals which support $C$ at $x_0$. Note that $d_C(x)$ is the distance from $x$ to $E\setminus C$ if $x\in C$ and the negative of the distance from $x$ to $C$ if $x\in E\setminus C$. Note also that, in case $C$ is a cone, $\mathrm{S}C\subset E^\ast$; that is, the supporting affine functionals are linear functionals. Moreover, by the Hahn--Banach Theorem, the infimum in \eqref{eq:distance} is always attained by some $\ell\in \mathrm{S}C$.

Finally, observe that a convex, $O(n)$-invariant cone $\Gamma\subset \mathrm{S}(n)$ defines a convex, symmetric\footnote{That is, invariant under permutation of components.} cone $\gamma\subset \R^n$ (and vice versa) via the rule
\[
z\in \gamma\quad\iff\quad o(\diag(z))\subset \Gamma\,,
\]
where $o$ denotes the orbit under the $O(n)$ action, and
\bann
d_{\gamma}(z)=d_{\Gamma}(Z)
\eann
for any $z\in \gamma$ and $Z\in o(\diag(z))$.

\begin{prop}\label{prop:evolveG}
Let $X:M^n\times I\to\R^{n+1}$ be a solution of mean curvature flow. Given any closed, convex, $O(n)$-invariant cone $\Gamma\subset\mathcal{S}(n)$, the function $G:\M^n\times I\to\R$ defined (with respect to some, and hence any, orthonormal basis) by
\bann
G(x,t):=d_\Gamma(A_{(x,t)})
\eann
satisfies
\ba\label{eq:evolveG}
(\pd_t-\Delta)G\geq |A|^2G
\ea
in both the viscosity and the distributional sense.
\end{prop}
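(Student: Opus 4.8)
The plan is to realise $G$ as a locally uniform limit of smooth functions for which the inequality is classical, and then to pass to the limit in both senses. Since $\Gamma$ is a cone, $d_\Gamma$ is an infimum of unit-norm linear functionals, hence concave and $1$-Lipschitz on all of $\mathcal{S}(n)$; in particular $G$ is locally Lipschitz. Convolving $d_\Gamma$ with a smooth, non-negative, $O(n)$-invariant (that is, $\norm{\cdot}$-radial) kernel $\rho_\varepsilon$ supported in $B_\varepsilon\subset\mathcal{S}(n)$ produces functions $\mathcal D_\varepsilon\in C^\infty(\mathcal{S}(n))$ which are again $O(n)$-invariant, concave and $1$-Lipschitz, with $|\mathcal D_\varepsilon-d_\Gamma|\leq\varepsilon$ everywhere; moreover Euler's identity $\inner{Dd_\Gamma(W)}{W}=d_\Gamma(W)$ (valid a.e.\ since $d_\Gamma$ is $1$-homogeneous) passes through the convolution to give $\inner{D\mathcal D_\varepsilon(W)}{W}\geq\mathcal D_\varepsilon(W)-\varepsilon$ for all $W$. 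I then set $G_\varepsilon:=\mathcal D_\varepsilon(A)$, so that $G_\varepsilon$ is smooth and $G_\varepsilon\to G$ locally uniformly.

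The heart of the matter is to compute $(\pd_t-\Delta)G_\varepsilon$ pointwise. Working at a point in a local orthonormal frame that is Levi-Civita parallel there and is transported in time by $\cd_t$ (so that, by metric compatibility, it is orthonormal for all $t$), the chain rule gives
\bann
(\pd_t-\Delta)\mathcal D_\varepsilon(A)=\inner{D\mathcal D_\varepsilon(A)}{(\cd_t-\Delta)A}-\inner{D^2\mathcal D_\varepsilon(A)}{\cd A\otimes\cd A}+\inner{D\mathcal D_\varepsilon(A)}{\mathfrak R}\,,
\eann
where $\mathfrak R$ collects the curvature and frame-rotation corrections that appear because the frame cannot be made parallel to second order. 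The point is that these corrections organise into $\mathfrak R=[\Xi,A]$ for a (frame-dependent) endomorphism field $\Xi$, while $D\mathcal D_\varepsilon(A)$ commutes with $A$ because $\mathcal D_\varepsilon$ is isotropic; hence $\inner{D\mathcal D_\varepsilon(A)}{[\Xi,A]}=-\tr\!\big([D\mathcal D_\varepsilon(A),A]\,\Xi\big)=0$ and the correction drops out. Inserting \eqref{eq:evolveA}, using concavity of $\mathcal D_\varepsilon$ (so that $-\inner{D^2\mathcal D_\varepsilon(A)}{\cd A\otimes\cd A}\geq0$), and the approximate Euler identity above, I obtain the classical inequality
\bann
(\pd_t-\Delta)G_\varepsilon=|A|^2\inner{D\mathcal D_\varepsilon(A)}{A}-\inner{D^2\mathcal D_\varepsilon(A)}{\cd A\otimes\cd A}\geq|A|^2G_\varepsilon-\varepsilon|A|^2\,.
\eann
On any compact subset of $\M^n\times I$ the error $\varepsilon|A|^2$ tends to $0$ uniformly while $G_\varepsilon\to G$ and $|A|^2G_\varepsilon\to|A|^2G$ uniformly; since each $G_\varepsilon$ is a classical (hence viscosity) supersolution of $(\pd_t-\Delta)u=|A|^2u-\varepsilon|A|^2$ and satisfies the corresponding distributional inequality against non-negative test functions, the standard stability of viscosity solutions and a direct passage to the limit in the weak formulation give, respectively, that $G$ is a viscosity and a distributional supersolution of $(\pd_t-\Delta)u=|A|^2u$, which is \eqref{eq:evolveG}.

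The step I expect to be the main obstacle is the curvature bookkeeping indicated above: one must verify that commuting the scalar Laplacian past the isotropic nonlinearity $\mathcal D_\varepsilon$ and past the evolving, necessarily non-parallel, frame produces only a correction of the form $[\Xi,A]$ tested against $D\mathcal D_\varepsilon(A)$, so that the isotropy of $\mathcal D_\varepsilon$ (equivalently, that $D\mathcal D_\varepsilon(A)$ shares eigenvectors with $A$) annihilates it. One may instead argue directly at points where $d_\Gamma$ is twice differentiable at $A$---so that $G$ is smooth there, $Dd_\Gamma(A)$ is a polynomial in $A$, and Euler's identity holds exactly---and invoke the viscosity machinery only at the remaining points; the mollification merely makes both conclusions uniform. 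The convex-geometry facts about $d_\Gamma$ are exactly those recalled in the excerpt, and the viscosity and distributional limiting arguments are routine.
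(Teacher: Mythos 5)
Your argument is correct, but it follows a genuinely different route from the paper's. The paper proceeds by \emph{linearizing} $d_\Gamma$ at a single supporting functional $L_0\in\mathrm{S}\Gamma$ realizing the distance at the base point, extending $L_0$ to a tensor $L$ via a parallel (in space and time) orthonormal frame, and using $\phi:=L(A)$ as a smooth \emph{upper} barrier for $G$. This gives the viscosity inequality directly, and the same barrier shows $G$ is locally semi-concave, whence Alexandroff's theorem converts the viscosity statement into a distributional one. You instead \emph{mollify} $d_\Gamma$ in $\mathcal S(n)$ by a radial kernel, observe that the mollification $\mathcal D_\varepsilon$ is smooth, concave, $O(n)$-invariant and approximately $1$-homogeneous, establish the classical supersolution inequality for $G_\varepsilon=\mathcal D_\varepsilon(A)$ with an $O(\varepsilon)$ error, and pass to the locally uniform limit. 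Both routes are sound; yours applies verbatim to any concave, $1$-homogeneous, $O(n)$-invariant function of $A$ (not only the distance to a cone), while the paper's route also delivers the semi-concavity of $G$, which is itself a useful by-product. Two points deserve tightening in your write-up. First, your description of the ``frame-rotation corrections'' as $\mathfrak R=[\Xi,A]$ is somewhat muddled; the cleanest statement is that if you use the covariant time derivative $\nabla_t$ (with respect to which orthonormal frames stay orthonormal) and the connection Laplacian, the chain rule is exactly
\[
(\pd_t-\Delta)\mathcal D_\varepsilon(A)=\inner{D\mathcal D_\varepsilon(A)}{(\nabla_t-\Delta)A}-\inner{D^2\mathcal D_\varepsilon(A)}{\nabla A\otimes\nabla A}
\]
with \emph{no} correction term at all; the $[\Xi,A]$ business only appears if you compute in a frame that is orthonormal but not parallel, and then the $O(n)$-invariance of $\mathcal D_\varepsilon$ (equivalently $[D\mathcal D_\varepsilon(A),A]=0$) kills it as you say. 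You should state one of these two versions precisely rather than gesture at the bookkeeping. Second, your alternative in the final paragraph---arguing only at points of twice-differentiability of $d_\Gamma$---does not by itself yield the distributional inequality; you would still need semi-concavity of $G$ and Alexandroff (which is precisely what the paper supplies), so the mollification is not merely a convenience but the substance of your distributional conclusion.
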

\begin{proof}
We first show that the inequality holds in the viscosity sense. Fix $(x_0,t_0)\in M^n\times I$ and let $\varphi\in C^\infty(B_r(x_0,t_0)\times(t_0-r^2,t_0])$, $r>0$, be any smooth lower support function for $G$ at $(x_0,t_0)$; that is,
\bann
\varphi\leq G \quad\text{on}\quad B_r(x_0,t_0)\times (t_0-r^2,t_0]\quad\text{and}\quad \varphi(x_0,t_0)=G(x_0,t_0)\,.
\eann
Here $B_r(x_0,t_0):=\{x\in M^n:d_{t_0}(x,x_0)<r\}$ denotes the time $t_0$ metric ball of radius $r$ centred at $x_0$.

We need to show that $\varphi$ satisfies
\bann
(\pd_t-\Delta)\varphi\geq |A|^2\varphi
\eann
at the point $(x_0,t_0)$. Fix an orthonormal basis $\{e^0_i\}_{i=1}^n$ at $(x_0,t_0)$ and let $L_0\in \mathrm{S}\Gamma$ be a supporting affine functional satisfying
\bann
G=L_0^{ij}A_{ij}\quad\text{at}\quad (x_0,t_0)\,.
\eann
Having fixed an orthonormal basis at $(x_0,t_0)$, we can consider $L_0$ as a symmetric bilinear form acting on $T_{x_0}M$. We smoothly extend $L_0$ to a symmetric bilinear form $L$ defined in a neighbourhood of $(x_0,t_0)$ by setting $L:=L_0^{ij}e_i\otimes e_j$, where the orthonormal frame $\{e_i\}_{i=1}^n$ is formed by solving
\bann
\overline\cd e_i\equiv 0\quad\text{and}\quad\cd_te_i\equiv 0\quad\text{with}\quad e_i(x_0,t_0)=e_i^0
\eann
for any metric connection $\overline\cd$. Set $\phi(x,t):=L(A_{(x,t)})$. Choosing $r$ smaller if needed, we can arrange that $\phi$ is defined on $B_r(x_0,t_0)\times (t_0-r^2,t_0]$. Moreover, since the basis $\{e_i\}_{i=1}^n$ remains orthonormal and $L_0\in \mathrm{S}\Gamma$, it is a consequence of the definition \eqref{eq:distance} that
\bann
\phi\geq G \quad\text{on} \quad B_r(x_0,t_0)\times(t_0-r^2,t_0]\,. 
\eann
Thus, $\phi$ is a smooth \emph{upper} support for $G$ at $(x_0,t_0)$. Since $\phi$ is smooth, we can compute
\ba\label{eq:metricconnection}
(\pd_t-\Delta)\varphi\geq{}&(\pd_t-\Delta)\phi\nonumber\\
={}&(\pd_t-\Delta)(L(A))\nonumber\\
={}&L((\cd_t-\Delta)A)-g^{kl}\lb 2\cd_kL(\cd_lA)+\cd_k\cd_lL(A)\rb\nonumber\\
={}&|A|^2\varphi-g^{kl}\lb 2\cd_kL(\cd_lA)+\cd_k\cd_lL(A)\rb
\ea
at $(x_0,t_0)$. Choosing $\overline \cd:=\cd$ yields the claim (later, we will choose $\overline \cd$ more carefully).

To see that the inequality is satisfied in the distributional sense, it now suffices, by Alexandroff's Theorem \cite{Al39}, to show that $G$ is locally quasi-concave \cite[Chapter 6]{EvansGariepy}. To prove this, fix $(x_0,t_0)$ and let $\phi\in C^\infty(B_r(x_0)\times(t_0-r^2,t_0])$ be the upper support for $G$ at $(x_0,t_0)$ constructed above. Setting $u(s):=G\circ\gamma$ for any unit length geodesic $\gamma:I\to M^n$ with $(\gamma(0),\gamma'(0))=(x_0,v)$, we find
\bann
u(s)-u(0)\leq{}&\phi(\gamma(s))-\phi(x_0)\\
={}&s\nabla_v\phi(x_0)+\frac{1}{2}s^2\nabla^2_{v,v}\phi(x_0)+o(s^2)\,.
\eann
Now set $u_{\lambda}(s):=u(s)-\lambda s^2$, where $\lambda:=\sup_{B_{r/2}(x_0,t_0)}|\nabla^2\phi|$. Then
\bann
u_\lambda(s)-u_\lambda(0)\leq{}&s\nabla_v\phi(x_0)+\frac{1}{2}s^2\nabla^2_{v,v}\phi(x_0)-\lambda s^2+o(s^2)\\
\leq{}&s\nabla_v\phi(x_0)-\frac{\lambda}{2} s^2+o(s^2)\,.
\eann
For $s$ sufficiently small, we obtain
\bann
u_\lambda(s)\leq{}&u_\lambda(0)+s\nabla_v\phi(x_0)\,.
\eann
Thus, at each point, we have found a supporting line lying locally above the graph of $u_\lambda$. This proves the claim.
\end{proof}

The following well-known `tensor maximum principle' is an immediate corollary.

\begin{cor}
Let $X:M^n\times [0,T)\to\R^{n+1}$ be a solution of mean curvature flow and $\Gamma\subset \mathcal{S}(n)$ a closed, convex, $O(n)$-invariant cone. Suppose that $A_{(x,0)}\in \Gamma$ for all $x\in M^n$. Then $A_{(x,t)}\in \Gamma$ for all $(x,t)\in M^n\times [0,T)$.
\end{cor}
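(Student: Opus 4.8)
The plan is to deduce the corollary directly from Proposition \ref{prop:evolveG}. Since $\Gamma$ is closed, the hypothesis $A_{(x,0)}\in\Gamma$ is precisely the statement $G(\cdot,0)\ge 0$ for the function $G(x,t)=d_\Gamma(A_{(x,t)})$, and the desired conclusion $A_{(x,t)}\in\Gamma$ is precisely $G\ge 0$ on $M^n\times[0,T)$. So it suffices to run a maximum-principle argument on the viscosity supersolution inequality $(\pd_t-\Delta)G\ge |A|^2G$ supplied by Proposition \ref{prop:evolveG}.

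First I would reduce to a compact time slab: fix $T'<T$ and set $C:=\sup_{M^n\times[0,T']}|A|^2<\infty$, which is finite because $M^n$ is compact and $A$ is smooth on the closed slab. It is enough to prove $G\ge 0$ on $M^n\times[0,T']$ for every such $T'$, since then $G\ge 0$ on all of $M^n\times[0,T)$.

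For the slab argument I would use an exponential barrier. Given $\varepsilon>0$, set $G_\varepsilon:=G+\varepsilon\,\E^{(C+1)t}$, so that $G_\varepsilon(\cdot,0)\ge\varepsilon>0$, and I claim $G_\varepsilon>0$ on $M^n\times[0,T']$. If not, then since $G$ is continuous (being the composition of the Lipschitz function $d_\Gamma$ with the smooth field $A$) and $M^n\times[0,T']$ is compact, there is a first time $t_0\in(0,T']$ and a point $x_0\in M^n$ with $G_\varepsilon(x_0,t_0)=0$ and $G_\varepsilon(\cdot,t)\ge 0$ for all $t\le t_0$. Then $\varphi(x,t):=-\varepsilon\,\E^{(C+1)t}$ satisfies $\varphi\le G$ on $M^n\times[0,t_0]$ with equality at $(x_0,t_0)$; that is, $\varphi$ is a smooth lower support for $G$ at $(x_0,t_0)$. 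Proposition \ref{prop:evolveG} therefore forces $(\pd_t-\Delta)\varphi\ge|A|^2\varphi$ at $(x_0,t_0)$. Since $\Delta\varphi=0$ and $\varphi(x_0,t_0)<0$, this reads $-(C+1)\varepsilon\,\E^{(C+1)t_0}\ge -|A|^2(x_0,t_0)\,\varepsilon\,\E^{(C+1)t_0}$, hence $C+1\le|A|^2(x_0,t_0)\le C$, a contradiction. Thus $G_\varepsilon>0$ on $M^n\times[0,T']$ for every $\varepsilon>0$, and letting $\varepsilon\downarrow 0$ gives $G\ge 0$ on $M^n\times[0,T']$, completing the proof.

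There is essentially no obstacle here; the content lies entirely in Proposition \ref{prop:evolveG}. The only points requiring a little care are: the potential $|A|^2$ is bounded only on compact subintervals of $[0,T)$, which is why one exhausts by $T'<T$; the sign of the exponential perturbation must be chosen so that the supersolution inequality is genuinely violated at the first touching point; and one must note that $G$ is continuous so that the first touching time and point exist by compactness. Alternatively, one could invoke the distributional form of Proposition \ref{prop:evolveG} together with the standard weak parabolic maximum principle, or simply cite Hamilton's vectorial maximum principle, but the viscosity argument above is the most self-contained.
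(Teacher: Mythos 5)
Your proof is correct and is precisely the standard maximum-principle argument that the paper leaves implicit when it calls the corollary ``an immediate corollary'' of Proposition \ref{prop:evolveG}: you correctly identify that $A\in\Gamma$ is equivalent to $G\ge 0$, exhaust by compact time slabs to bound $|A|^2$, and run a first-touching-point barrier argument against the viscosity lower supports, which is exactly what the proposition is set up to supply. The sign bookkeeping at the touching point is right, and the continuity of $G$ (via Lipschitzness of $d_\Gamma$ and $O(n)$-invariance of $\Gamma$) is noted where it matters.
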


A more careful analysis yields a stronger statement for the cones
\bann
\Lambda_m:=\mathrm{Conv}\{\mathrm{Cyl}_m\}\,,
\eann
where $\mathrm{Conv}$ denotes the convex hull in $\mathcal{S}(n)$ and $\mathrm{Cyl}_m$ denotes the set of \emph{$m$-cylindrical points}; that is, the points $W\in \mathcal{S}(n)$ with a null eigenvalue of multiplicity $m$ and a positive eigenvalue of multiplicity $n-m$.
\begin{prop}\label{prop:splitting}
Let $X:M^n\times [0,T)\to\R^{n+1}$, $n\geq 2$, be a compact, connected solution of mean curvature flow and $\Lambda_m\subset \mathcal{S}(n)$ the convex hull of the $m$-cylindrical points for some $m\in\{0,\dots,n-1\}$. Suppose that $A_{(x,0)}\in \Lambda_m$ for all $x\in M^n$. Then either $A_{(x,t)}\in \mathrm{int}(\Lambda_m)$ for all $(x,t)\in M^n\times (0,T)$ or $m=0$ and $M^n_t$ is a shrinking sphere.
\end{prop}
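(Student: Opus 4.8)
The plan is to argue by the strong maximum principle applied to the function $G(x,t):=d_{\Lambda_m}(A_{(x,t)})$, which by Proposition~\ref{prop:evolveG} is a viscosity (and distributional) supersolution of the Jacobi equation $(\pd_t-\Delta)G\geq|A|^2G$. By hypothesis $G\geq 0$ at $t=0$, hence $G\geq 0$ for all $t$ by the ordinary maximum principle (or the corollary already stated). Suppose, for some $t_1\in(0,T)$ and some $x_1\in M^n$, that $A_{(x_1,t_1)}\in\pd\Lambda_m$, i.e.\ $G(x_1,t_1)=0$. By the strong maximum principle for supersolutions (valid in the viscosity/distributional sense; see e.g.\ the version used for $\overline k$ and $H$ earlier, or a parabolic Hopf/Harnack argument), $G$ vanishes identically on $M^n\times(0,t_1]$; combined with connectedness of $M^n$ and a standard continuation argument in time, $G\equiv 0$ on $M^n\times(0,T)$, i.e.\ $A_{(x,t)}\in\pd\Lambda_m$ for every $(x,t)$.

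Next I would extract rigidity from the equality case. At a point where $G=0$ and $G$ is (infinitesimally) spatially constant, revisiting the computation \eqref{eq:metricconnection} in the proof of Proposition~\ref{prop:evolveG}, the inequality $(\pd_t-\Delta)G\geq|A|^2G$ must be an equality, which forces the gradient and Hessian error terms $g^{kl}(2\cd_kL(\cd_lA)+\cd_k\cd_lL(A))$ to vanish along with $\cd G$. The key structural input is the description of $\pd\Lambda_m$: a point $W\in\pd\Lambda_m$ whose spectrum has the form "a zero eigenvalue of some multiplicity and positive eigenvalues" lies on a face of $\Lambda_m$, and the supporting functional $L_0$ at $W$ must annihilate the positive-eigenvalue directions (this is exactly the content that the extreme points $\mathrm{Cyl}_m$ generating $\Lambda_m$ are the $m$-cylindrical points). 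One then shows that $\cd_kL(\cd_lA)\equiv 0$ forces $\cd A$ to be "supported" on the null directions of $A$ in a way incompatible with $\cd A\neq 0$ unless the flow splits. Concretely, if $m\geq 1$, the vanishing of these terms says the distribution spanned by the null directions of $A$ (the "cylindrical" directions) is parallel and $A$ restricted to its orthogonal complement is a multiple of the identity, so by a de Rham/Hamilton-type splitting argument $M^n_t$ is (an open piece of, hence by compactness all of) a cylinder $\R^m\times S^{n-m}$ — but a compact cylinder has no $\R^m$ factor, contradicting $m\geq 1$. Hence $m=0$, $\Lambda_0$ is the umbilic ray, $A=\kappa\,\mathrm{I}$ everywhere, and a totally umbilic compact hypersurface in $\R^{n+1}$ is a round sphere, which under \mcf is the shrinking sphere $S^n_{\sqrt{2n(1-t)}}$.

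A cleaner route for the splitting step, avoiding heavy de Rham machinery, is to apply Hamilton's strong maximum principle for systems (for the tensor $A$ under \eqref{eq:evolveA}) directly: the set of null vectors of $A$, if nontrivial at one point, spans a parallel subbundle invariant under $\cd_t$, and the ambient flatness of $\R^{n+1}$ plus the Gauss equation then yields a local isometric splitting $M^n_t = \R^m \times N^{n-m}_t$ with $N_t$ evolving by \mcf; compactness kills the Euclidean factor unless $m=0$. This is the standard mechanism (cf.\ Huisken--Sinestrari's convexity estimates and Hamilton's work on Ricci flow) and I would simply cite it.

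The main obstacle is the rigidity/equality analysis: one must carefully verify that the supporting functional $L_0$ at a boundary point of the \emph{convex hull} $\Lambda_m$ (as opposed to at the generating cylindrical points themselves) still annihilates precisely the positive eigendirections — faces of $\mathrm{Conv}\{\mathrm{Cyl}_m\}$ need to be identified — and then to convert "the error terms in \eqref{eq:metricconnection} vanish" into "the null distribution of $A$ is parallel". Getting the bookkeeping of eigenvalue multiplicities right in the maximum-principle step (the set where $G=0$ may a priori meet strata of $\pd\Lambda_m$ of different cylindrical type) is the delicate point; everything else is a routine application of the strong maximum principle and the classification of umbilic hypersurfaces.
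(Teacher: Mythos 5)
Your high-level strategy (strong maximum principle for $G$, then rigidity from equality) is the one the paper uses, but there are two genuine gaps that would sink the proposal as written.

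First, the plan to ``revisit the computation \eqref{eq:metricconnection}'' and conclude that ``the gradient and Hessian error terms $g^{kl}\lb 2\cd_kL(\cd_lA)+\cd_k\cd_lL(A)\rb$ vanish'' extracts nothing: in Proposition \ref{prop:evolveG} the proof takes $\overline\cd=\cd$, so $L$ is $\cd$-parallel by construction and those terms are \emph{identically zero}. Equality therefore gives no constraint. The decisive step in the paper's proof is to re-derive the supersolution inequality with a \emph{nontrivially modified} metric connection $\overline\cd=\cd+C$, then optimize $C$ pointwise by taking $(\kappa_j-\kappa_i)C_{kij}=-\cd_kA_{ij}$, producing a genuine non-negative gradient term
$Q(\cd A)=2\sup_{\ell\in \mathrm{S}_{\vec\kappa}\lambda_m}\sum_k\sum_{\kappa_i<\kappa_p}\frac{\ell^i-\ell^p}{\kappa_p-\kappa_i}(\cd_kA_{ip})^2$.
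Only after that does the equality case yield $Q(\cd A)\equiv 0$ and, together with $\cd G\equiv 0$, the algebraic identities $\sum_i\ell^i\cd_kA_{ii}=0$ and $\ell^i\cd_kA_{ij}=\ell^j\cd_kA_{ij}$. Your proposal skips this connection-choice entirely.

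Second, your picture of $\pd\Lambda_m$ is off. A generic boundary point of $\mathrm{Conv}\{\mathrm{Cyl}_m\}$ is a convex combination of cylindrical matrices whose zero eigenvalues sit in \emph{different} coordinate slots, and such a combination typically has all eigenvalues strictly positive. So at a generic point where $G=0$ the ``null distribution of $A$'' is trivial and the de~Rham/Hamilton splitting argument you outline never gets started; there is no parallel degenerate subbundle to work with. The paper circumvents this precisely because it has the gradient identities above: combining them with the Codazzi identity gives $\ell^k\cd_kH=0$ for every $k$, so either $\cd H\equiv 0$ (Alexandrov, shrinking sphere) or there is a point where some component $\ell^k$ vanishes. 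A separate combinatorial claim about supporting functionals of $\lambda_m$ then shows that at such a point one principal curvature is zero \emph{and} $\ell$ supports the positive cone $\overline\Gamma_+$, so the standard splitting theorem for $\Gamma_+$ (not for $\Lambda_m$) applies, contradicting compactness. This reduction from $\Lambda_m$ to $\Gamma_+$ is exactly the missing mechanism in your sketch, and without it the rigidity step does not close.
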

\begin{proof}
First, we make a more careful choice of the metric connection in \eqref{eq:metricconnection} in order to obtain a good gradient term (cf. \cite[Theorem 3.2]{An07}). Given any $C\in \Gamma(M\times [0,T),T^\ast M\otimes T^\ast M\otimes TM)$ satisfying
\ba\label{eq:antisymmetry}
g(C(w,u),v)+g(u,C(w,v))=0\quad\text{for all}\quad u,v,w\in TM
\ea
we can define a metric connection $\overline \cd$ on $TM$ (and the entire tensor algebra using the Leibniz rule) via
\[
\overline \cd_uv:=\cd_uv+C(u,v)\,.
\]
Recalling \eqref{eq:metricconnection}, we want to estimate the gradient term
\bann
Q_C(\cd A):={}&-g^{kl}\big( 2\cd_kL(\cd_lA)+\cd_k\cd_lL(A)\big)
\eann
at the point $(x_0,t_0)$, where, with respect to the $\overline \nabla$-parallel frame $\{e_i\}_{i=1}^n$, $L=L_0^{ij}e_i\otimes e_j$ for some supporting affine functional $L_0\in\mathrm{S}\Lambda_m$ satisfying $G(x_0,t_0)=L_0(A_{(x_0,t_0)})$. So we need to compute
\bann
\cd_kL=L_0^{ij}\lb C_{ki}{}^pe_p\otimes e_j+C_{kj}{}^qe_i\otimes e_q\rb
\eann
and
\bann
\cd_k\cd_lL={}&L_0^{ij}\lb\cd_kC_{li}{}^pe_p\otimes e_j+\cd_kC_{lj}{}^qe_i\otimes e_q\right.\\
{}&+C_{li}{}^pC_{kp}{}^re_r\otimes e_j+C_{li}{}^pC_{kj}{}^qe_p\otimes e_q\\
{}&\left.+C_{lj}{}^qC_{ki}{}^pe_p\otimes e_q+C_{lj}{}^qC_{kq}{}^re_i\otimes e_r\rb\,.
\eann
To simplify things, we can arrange that $\{e^0_i\}_{i=1}^n$ is a principal frame and
\bann
L_0=\diag(\ell_0)
\eann
for some $\ell_0\in \mathrm{S}_{\vec\kappa(x_0,t_0)}\lambda_m$, where $\lambda_m$ is the convex, symmetric cone in $\R^n$ corresponding to $\Lambda_m$ and $\vec\kappa$ denotes the eigenvalue $n$-tuple $(\kappa_1,\dots,\kappa_n)$. Then, making use of the antisymmetry \eqref{eq:antisymmetry}, we find
\bann
Q_C(\cd A)={}&-2\sum_{k,i,p=1}^n\ell_0^iC_{kip}\lb2\cd_{k}A_{ip}+C_{kip}(\kappa_p-\kappa_i)\rb\\
={}&-2\sum_{k=1}^n\sum_{i<p}\lb\ell_0^i-\ell_0^p\rb C_{kip}\lb2\cd_{k}A_{ip}+C_{kip}(\kappa_p-\kappa_i)\rb\
\eann
at the point $(x_0,t_0)$.

Noting that $\cd_kA_{ij}=0$ whenever $\kappa_i=\kappa_j$, we can rewrite this as
\bann
Q_C&(\cd A)=\\
{}&2\sum_{k=1}^n\sum_{\kappa_i<\kappa_p}\lb\ell_0^i-\ell_0^p\rb\lb\kappa_p-\kappa_i\rb \lb\frac{(\cd_kA_{ip})^2}{(\kappa_p-\kappa_i)^2}-\lsb C_{kip}+\frac{\cd_{k}A_{ip}}{\kappa_p-\kappa_i}\rsb^2\rb\,.
\eann
If we choose $C$ so that
\[
(\kappa_j-\kappa_i)C_{kij}=-\cd_{k}A_{ij}
\]
at $(x_0,t_0)$ and set
\bann
Q(\cd A):=2\sup_{\ell\in \mathrm{S}_{\vec\kappa}\lambda_m}\sum_{k=1}^n\sum_{\kappa_i<\kappa_p}\frac{\ell^i-\ell^p}{\kappa_p-\kappa_i}(\cd_kA_{ip})^2\,,
\eann
then we have proved that $G$ satisfies
\bann
(\pd_t-\Delta)G\geq |A|^2G+Q(\cd A)
\eann
in the viscosity sense.

Note that the gradient term is non-negative.
\begin{claim}
Let $\gamma\subset \R^n$ be a convex, symmetric cone. Then for each $w\in \gamma$ and each $\ell\in \mathrm{S}\gamma$ such that $d_\gamma(w)=\ell(w)$,
\[
(\ell^i-\ell^j)(w_j-w_i)\geq 0\quad\text{for each}\quad i,\, j\,.
\]
\end{claim}
\begin{proof}
Fix $i,j\in\{1,\dots,n\}$. By symmetry of $\gamma$, we have $\widehat w\in \gamma$, where $\widehat w$ is obtained from $w$ by interchanging its $i$-th and $j$-th component; that is,
\[
\widehat w:=w-(w_i-w_j)(e_i-e_j)\,.
\]
Since $\ell(w)=d_{\gamma}(w)=d_{\gamma}(\widehat w)=\inf_{\widehat\ell\in \mathrm{S}\gamma}\widehat \ell(\widehat w)$, we obtain
\[
(\ell^j-\ell^i)(w_i-w_j)=\sum_{k=1}^n\ell^k(\widehat w_k-w_k)\geq d_{\gamma}(\widehat w)-d_{\gamma}(w)=0\,.
\]
\end{proof}

So suppose now that $G$ reaches zero at an interior point $(x_0,t_0)$. Then, by the strong maximum principle \cite{DL04}, $G\equiv 0$. Hence $G$ is smooth and satisfies
\bann
\cd G\equiv 0\quad\text{and}\quad Q(\cd A)\equiv 0\,.
\eann
It follows that
\bann
\sum_{i=1}^n\ell^i\cd_kA_{ii}=0\quad\text{for each} \quad k
\eann
and
\bann
\ell^i\cd_{k}A_{ij}=\ell^j\cd_{k}A_{ij} \quad\text{for each}\quad k,i,j
\eann
at each point $(x,t)$ for every supporting $\ell\in \mathrm{S}_{\vec\kappa(x,t)}\gamma$. Putting these together, we find
\bann
\ell^k\cd_kH=\sum_{i=1}^n\ell^k\cd_kA_{ii}=\sum_{i=1}^n\ell^k\cd_iA_{ki} =\sum_{i=1}^n\ell^i\cd_iA_{ki}=\sum_{i=1}^n\ell^i\cd_kA_{ii}=0
\eann
for each $k$. If $\cd H\equiv 0$ then, by compactness of $M$, the time slices must be round spheres \cite{Al39}, which implies the claim; otherwise, there is a point at which $\ell^k=0$ for some $k$. That is, $\ell\cdot e_k=0$. We claim that $\kappa_1=0$ at such a point.

\begin{claim}
Let $\ell\in \mathrm{S}_w\lambda_m$ support $\lambda_m$ at $w\in \pd\lambda_m$. Then either $\ell^k\neq 0$ for every $k$ or there is an $l$ such that $w_l=0$.
\end{claim}
\begin{proof}
The claim is evident for $m=n-1$, since in that case $\lambda_m$ is the non-negative cone $\overline \gamma_+=\{w\in \R^n:\min_{1\leq i\leq n}w_i\geq 0\}$. So suppose that $m\leq n-2$. Note that $\lambda_m$ is the convex hull of the cylindrical points $\{(w^m_{\sigma(1)},\dots,w^m_{\sigma(n)}):\sigma\in P_n\}$, where $P_n$ denotes the set of permutations of $\{1,\dots,n\}$ and
\bann
w^m:=(\underbrace{0,\dots,0}_{m-\text{times}},1,\dots,1)\,.
\eann
Thus,
\ba\label{eq:msupport0}
\sum_{i=1}^n\ell^iw^m_{\sigma(i)}\geq 0
\ea
for each $\sigma\in P_n$. If $\ell^k=0$ for some $k\in\{1,\dots,n\}$, then applying \eqref{eq:msupport0} to each $\sigma$ satisfying $\sigma(k)\in \{m+1,\dots,n\}$ (i.e. by putting one of the $1$'s in the $\sigma(k)$-th position), we find that the sum of any $n-(m+1)$ of the components of $\ell$ is non-negative; that is,
\ba\label{eq:msupport1}
\sum_{m+2}^n\ell^{\sigma(i)}\geq 0
\ea
for each $\sigma\in P_n$. On the other hand, by the convex hull property, the supporting hyperplane $\{z\in \R^n:\ell(z)=0\}$ must pass through at least one cylindrical point. That is, there is some $\omega\in P_n$ such that
\ba\label{eq:msupport2}
0=\sum_{i=1}^n\ell^iw^m_{\omega^{-1}(i)}=\sum_{i=m+1}^n\ell^{\omega(i)}\,.
\ea
Combining \eqref{eq:msupport1} and \eqref{eq:msupport2}, we deduce that
\bann
\ell^{\omega(i)}=0\quad\text{for}\quad i=m+1,\dots,n\quad\text{and}\quad \ell^{\omega(i)}\geq 0 \quad\text{for}\quad i=1,\dots,m\,.
\eann
It follows that $\ell$ supports the positive cone, which yields the claim.
\end{proof}

It follows that the distance to the boundary of the positive cone $\Gamma_+:=\{W\in \mathcal{S}(n):W>0\}$ reaches an interior minimum at such a point. The well-know splitting theorem for $\Gamma_+$ (see, for example, \cite[Proposition 4.2.7]{Mantegazza}) now implies that the solution splits off a line. But this is impossible by compactness of $M^n$.
\end{proof}

Next, we prove a geometric Poincar\'e inequality for functions with support compactly contained in the set of non-cylindrical points. To formulate the estimate, we denote by
\bann
\mathrm{Cyl}:=\cup_{m=0}^{n-1}\mathrm{Cyl}_m
\eann
the set of \emph{cylindrical points}, where $\mathrm{Cyl}_m$ is the set of $m$-cylindrical points defined above.

\begin{prop}[Poincar\'e inequality (Cf. {\cite[Lemma 5.4]{Hu84}} and {\cite[Proposition 3.3]{BrHu}})]\label{prop:Poincare}
Let $\Gamma\subset \mathcal{S}(n)$ be an open, $O(n)$-invariant cone satisfying $\overline\Gamma\setminus\{0\}\subset \{W\in \mathcal{S}(n):\tr(W)>0\}$ and $\overline\Gamma \cap\mathrm{Cyl}=\emptyset$. Then there is a constant $\gamma=\gamma(n,\Gamma)>0$ with the following property: Let $X:M^n\to\R^{n+1}$ be a smooth hypersurface and $u\in W^{2,1}(M^n)$ a function for which the set $\{A_{x}:u(x)>0, x\in M^n\}$ is precompact and lies in $\Gamma$. Then, for every $r>0$,
\bann
\gamma\int u^2|A|^2\,d\mu\leq r^{-1}\int|\cd u|^2\,d\mu+(1+r)\int u^2\frac{|\cd A|^2}{H^2}\,d\mu\,.
\eann
\end{prop}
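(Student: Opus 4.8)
The plan is to run the same kind of argument Huisken uses in \cite[Lemma 5.4]{Hu84}: integrate by parts against a cleverly chosen divergence identity coming from the Codazzi equations, and then estimate the resulting pointwise algebraic quantities. First I would recall that, by Simons' identity (see also Proposition \ref{prop:kSimons} referenced in the preliminaries), the full second fundamental form satisfies $\Delta A_{ij} = \cd_i\cd_j H + H A_{ik}A_{kj} - |A|^2 A_{ij}$, so that, tracing against $A$,
\[
A_{ij}\Delta A_{ij} = A_{ij}\cd_i\cd_j H + \tr(A^3)H - |A|^4\,.
\]
Rearranging and integrating $\int u^2 A_{ij}\Delta A_{ij}\,d\mu$ by parts twice (moving two derivatives off $A$) produces a term $\int u^2(|A|^4 - \tr(A^3)H)\,d\mu$ on one side, which is exactly the quantity I want to bound below by $\gamma\int u^2|A|^2\,d\mu$ up to the gradient terms $\int |\cd u|^2\,d\mu$ and $\int u^2|\cd A|^2\,d\mu$. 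The integration by parts uses the Codazzi identity $\cd_i A_{jk}=\cd_j A_{ik}$ to symmetrize, and it is here that one absorbs the cross terms via Cauchy–Schwarz with a parameter, producing the $r^{-1}$ and $(1+r)$ weights; the $H^2$ in the denominator of the last term appears because the clean gradient quantity that survives is $|\cd A|^2$ and one rewrites it against $|A|^2$ using $|A|^2 \le (\text{const})\, H^2$, valid on $\overline\Gamma$ since $\Gamma$ is a cone with $\tr > 0$ there.

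The heart of the matter — and the only place where the hypotheses $\overline\Gamma\cap\mathrm{Cyl}=\emptyset$ and $\overline\Gamma\setminus\{0\}\subset\{\tr>0\}$ enter — is the pointwise inequality
\[
|A|^4 - \tr(A^3)H \;\ge\; \gamma\,|A|^2\big(|A|^2 - \tfrac{1}{?}H^2\big) \quad\text{or rather}\quad |A|^2\,\big||A|^2 g - H A\big|^2/\text{(something)}\;\ge\;\gamma|A|^4,
\]
i.e. one needs $|A|^4 - \tr(A^3)H$ (equivalently $\tfrac12\sum_{i,j}(\kappa_i-\kappa_j)^2\kappa_i\kappa_j$ after a principal-frame computation — note $|A|^4-\tr(A^3)H = \sum_{i<j}\kappa_i\kappa_j(\kappa_i-\kappa_j)^2 \cdot(\text{combinatorial factor})$, let me instead write it as the manifestly correct $|A|^2\,\mathrm{tr}(A^2) - H\,\mathrm{tr}(A^3)$) to be bounded below by $\gamma |A|^4$ on $\overline\Gamma$. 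This is where I use compactness: the set $\{W\in\overline\Gamma : |W|=1\}$ is compact, and on it the continuous function $W\mapsto |W|^4 - \tr(W^3)\tr(W)$ — wait, this can vanish, e.g. at a sphere, so the correct statement must involve a gradient term, not a bound that holds for all of $\overline\Gamma$ uniformly. Re-examining: the clean identity should be that after integration by parts the \emph{surviving} non-negative pointwise term is not $|A|^4-\tr(A^3)H$ alone but combines with a genuine gradient contribution; the role of avoiding $\mathrm{Cyl}$ is that on $\overline\Gamma\setminus\mathrm{Cyl}$ the quadratic form $\sum_{i<j}\tfrac{\kappa_i-\kappa_j}{\text{(stuff)}}(\cd_k A_{ij})^2$-type expression controlling the gradient terms is \emph{coercive}, i.e. the eigenvalue gaps do not all degenerate simultaneously except at cylindrical points. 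Concretely, I would argue by contradiction: if the claimed inequality failed there would be a sequence $x_i$ with $A_{x_i}/|A_{x_i}|\to W_\infty\in\overline\Gamma$, $|W_\infty|=1$, along which the gradient-controlled positive term vanishes in the limit relative to $|A|^2$, forcing $W_\infty$ to have the structure "one eigenvalue zero with high multiplicity, the rest equal" — that is, $W_\infty\in\mathrm{Cyl}$, contradicting $\overline\Gamma\cap\mathrm{Cyl}=\emptyset$.

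The main obstacle, then, is isolating the correct non-negative pointwise algebraic quantity produced by the integration by parts and verifying that its vanishing locus (on the unit sphere of $\mathcal{S}(n)$) is exactly $\mathrm{Cyl}\cup\{W : \tr W \le 0\}$, so that the hypotheses on $\Gamma$ give the uniform positive lower bound $\gamma$ by compactness. I expect the bookkeeping to parallel Huisken's original computation closely, with the only genuinely new input being a soft compactness-plus-contradiction argument for the pointwise estimate, replacing Huisken's explicit umbilic-cone computation. The $\tr>0$ hypothesis is needed both to rewrite $|\cd A|^2$ against $H^{-2}$ and to ensure $H$ does not vanish on $\overline\Gamma\setminus\{0\}$, so that the term $\int u^2|\cd A|^2/H^2\,d\mu$ is well-defined on the support of $u$. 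Once the pointwise inequality is in hand, choosing the parameter in Cauchy–Schwarz to be $r$ and collecting terms yields exactly the stated inequality.
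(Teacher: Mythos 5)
Your high-level plan (Simons' identity, integrate by parts, absorb cross terms with a Cauchy--Schwarz parameter, and close the pointwise algebraic estimate via compactness of $\{W\in\overline\Gamma : |W|=1\}$) matches the skeleton of the paper's proof, but there is a genuine gap in the central step: you never identify the right algebraic quantity, and the one you propose cannot work.

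The quantity you isolate after tracing the traced Simons identity is (up to your sign error) Huisken's
\[
Z := H\,\tr(A^3) - |A|^4 \;=\; \sum_{i<j}\kappa_i\kappa_j(\kappa_i-\kappa_j)^2.
\]
This is nonnegative on the \emph{convex} cone, which is why Huisken's Lemma 5.4 closes for convex hypersurfaces; but Proposition~\ref{prop:Poincare} only assumes $\overline\Gamma\setminus\{0\}\subset\{\tr>0\}$ and $\overline\Gamma\cap\mathrm{Cyl}=\emptyset$. On a merely mean-convex cone, $Z$ changes sign, so the compactness argument you want to run ("restrict to the unit sphere of $\overline\Gamma$ and get a positive lower bound") has nothing to grip: the infimum can be negative. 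You sense this ("wait, this can vanish, e.g.\ at a sphere, so the correct statement must involve a gradient term...") but you then misdiagnose the fix as a statement about coercivity of the gradient part. That is not the resolution. Also note the sphere point \emph{is} excluded here: the umbilic ray is $\mathrm{Cyl}_0\subset\mathrm{Cyl}$, so the vanishing of $Z$ at the sphere is not the real obstruction; sign-indefiniteness on non-convex mean-convex cones is.

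The paper's key new ingredient, which you are missing, is the tensor
\[
C := A\otimes A^2 - A^2\otimes A,\qquad |C|^2 = 2\sum_{i<j}\kappa_i^2\kappa_j^2(\kappa_i-\kappa_j)^2.
\]
Unlike $Z$, $|C|^2$ is manifestly nonnegative on \emph{all} of $\mathcal{S}(n)$, and its vanishing locus on the unit sphere is exactly the set of (positively or negatively) cylindrical matrices. The hypotheses $\tr>0$ and $\overline\Gamma\cap\mathrm{Cyl}=\emptyset$ then exclude that locus from $\overline\Gamma\cap\{|W|=1\}$, and compactness gives the uniform bound $|C|^2 \geq \gamma\,|A|^2 H^4$ on $\overline\Gamma$. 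The reason this quantity is usable is the (symmetrized) Simons identity in its tensorial, un-traced form,
\[
\cd_{(i}\cd_{j)}A_{kl} - \cd_{(k}\cd_{l)}A_{ij} = C_{ijkl},
\]
which lets one write $|C|^2 = C^{ijkl}\big(\cd_i\cd_j A_{kl} - \cd_k\cd_l A_{ij}\big)$ and integrate by parts against the weight $u^2 H^{-4}$; the resulting cross terms are then controlled by $|C|\le cH^3$, $|\cd C|\le cH^2|\cd A|$ (again compactness on $\overline\Gamma$) and Young's inequality with parameter $r$. Your traced computation only sees the fully traced Simons identity and hence only produces $Z$, which is why you cannot escape the convex cone. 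Separately, your proposed "contradiction along a sequence $x_i$" is not a valid way to prove the integral inequality directly: compactness legitimately enters only at the level of the \emph{pointwise} algebraic bound $|C|^2\geq\gamma|A|^2H^4$, not of the integral estimate.
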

\begin{proof}
Define the tensor
\bann
C:=A\otimes A^2-A^2\otimes A\,.
\eann
Observe that
\bann
|C|^2=2\sum_{i>j}\kappa_i^2\kappa_j^2(\kappa_i-\kappa_j)^2.
\eann
It follows that $C$ vanishes only at cylindrical points. In particular, by homogeneity, and compactness of $\{W\in \overline\Gamma:|W|=1\}$, there is a constant $\gamma=\gamma(n,\Gamma)>0$ such that
\bann
|C|^2\geq\gamma |A|^2H^4
\eann
for all points in the support of $u$. On the other hand, Simons' identity states that
\bann
\cd_{(i}\cd_{j)}A_{kl}-\cd_{(k}\cd_{l)}A_{ij}=C_{ijkl}\,,
\eann
where the brackets indicate symmetrization. Thus,
\bann
\gamma\int u^2 |A|^2\,d\mu\leq{}& \int u^2H^{-4}|C|^2\\
={}&\int u^2H^{-4}C^{ijkl}(\cd_{i}\cd_{j}A_{kl}-\cd_{k}\cd_{l}A_{ij})\\
={}&\int u^2\lb 2H^{-4}C^{ijkl}u^{-1}\cd_iu-4H^{-5}C^{ijkl}\cd_{i}H\right.\\
{}&\left.+H^{-4}\cd_iC^{ijkl}\rb\cd_jA_{kl}\\
{}&-\int u^2\lb 2H^{-4}C^{ijkl}u^{-1}\cd_ku-4H^{-5}C^{ijkl}\cd_{k}H\right.\\
{}&\left.+H^{-4}\cd_kC^{ijkl}\rb\cd_lA_{ij}\,.
\eann
Estimating (using homogeneity and compactness) $|C|\leq c(n,\Gamma)H^3$ and $|\cd C|\leq c(n,\Gamma)H^2|\cd A|$ yields
\bann
\gamma\int u^2 |A|^2\,d\mu\leq{}&c\int u^2\lb2\frac{|\cd u|}{u}\frac{|\cd A|}{H}+\frac{|\cd A|^2}{H^2}\rb
\eann
for a constant $c$ depending only on $n$ and $\Gamma$. The claim now follows from Young's inequality.
\end{proof}

The following identity will play an analogous role for the inscribed curvature $\overline k$.

\begin{prop}[Cf. {\cite[\S 4]{ALM13}}, {\cite[\S 2]{NC2}} and {\cite[\S 3]{Br15}}]\label{prop:kSimons}
Let $X:M^n=\pd\Omega^{n+1}\hookrightarrow \R^{n+1}$ be a smooth, properly embedded, strictly mean convex hypersurface. Then, on the set $\overline M:= \{x\in M:\overline k(x)>\kappa_n(x)\}$,
\ba\label{eq:kSimons}
\frac{1}{2}H
\leq{}& \Div\lb W^2\cd\overline k\rb-\inner{W}{\cd_{W^2\cd\overline k}A}+\frac{1}{2}\vert W\cd\overline k\vert^2\tr(W)\,.
\ea
in both the viscosity and the distributional sense, where the tangent bundle endomorphism $W:\overline M\to T^\ast M\otimes TM$ is defined by $W^{-1}:=\overline k\,\mathrm{I}-A$.
\end{prop}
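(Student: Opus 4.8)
The plan is to mimic the computation that produces the evolution inequality \eqref{eq:evolveinscribedk} for $\overline k$, but to track \emph{all} terms rather than just the sign of the reaction term, so that one obtains a clean divergence-form identity. First I would work on the open set $\overline M$ where the supremum in \eqref{eq:inscribedk} is attained at some $y=y(x)\neq x$, so that $\overline k(x)=k(x,y(x))$ with $y(x)$ depending smoothly on $x$ by the implicit function theorem (using strict mean convexity and the fact that $\overline k>\kappa_n$ rules out the boundary case $y\to x$). On this set, $\overline k$ is smooth and one can differentiate the two-point function $k(x,y)$ directly: the first-order condition $D_y k(x,y)=0$ at the maximizing $y$ lets one eliminate the $y$-derivatives, and a second differentiation in $x$, combined with the Codazzi identity and the standard algebra of the two-point function (as in \cite{ALM13,NC2,Br15}), yields an expression for $\Delta\overline k$ in terms of $\cd\overline k$, $A$ and $W=(\overline k\,\mathrm{I}-A)^{-1}$. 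The point is to organize the resulting second-derivative terms into the divergence $\Div(W^2\cd\overline k)$ plus lower-order corrections; the term $-\inner{W}{\cd_{W^2\cd\overline k}A}$ arises precisely from differentiating $W$ (hence $A$) inside that divergence, and the gradient-quadratic term $\tfrac12|W\cd\overline k|^2\tr(W)$ is the leftover after completing the divergence. The left-hand side $\tfrac12 H$ (which should really read $\tfrac12 H^{-1}$ or carry the appropriate power — I would follow the normalization in \cite{Br15}) is the reaction contribution, and establishing that the error terms have the claimed sign is what forces the inequality rather than an equality.

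Next I would deal with the remaining points, where $\overline k(x)=\kappa_n(x)=\sup_{v}A_x(v,v)/g_x(v,v)$, i.e. the interior supremum is not attained. Here $\overline k$ need only be treated as a viscosity (and distributional) subsolution: given a smooth upper support $\varphi$ for $-\overline k$ — equivalently a lower support for $\overline k$ at $x_0$ — one tests it against $\kappa_n$, uses that $\kappa_n$ is the largest eigenvalue of $A$ and that $A$ satisfies \eqref{eq:evolveA}, and runs the usual eigenvalue-perturbation argument (choosing a frame diagonalizing $A$ at $x_0$ with $\kappa_n$ simple, or passing to the top eigenspace when it is not) to obtain the inequality \eqref{eq:kSimons} for $\varphi$ at $x_0$. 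In this regime $W^{-1}=\overline k\,\mathrm{I}-A$ has a kernel, so I would either interpret the right-hand side via a limiting argument from $\overline M$ or verify directly that $W^2\cd\overline k$ extends continuously (the gradient $\cd\overline k$ vanishes to sufficient order in the degenerate directions). Finally, the distributional statement follows from the viscosity one together with a semiconcavity bound for $\overline k$, exactly as in the proof of Proposition \ref{prop:evolveG}: one shows $\overline k$ is locally semiconcave by using the smooth upper supports just constructed (bounding their Hessians locally) and then invokes Alexandrov's theorem, so that the pointwise second-derivative inequality holds a.e. and hence distributionally.

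The main obstacle I anticipate is the bookkeeping on $\overline M$: correctly differentiating the two-point function twice, using $D_yk=0$ and the second-order maximality condition $D^2_{yy}k\leq 0$, and then recognizing the exact combination that assembles into $\Div(W^2\cd\overline k)-\inner{W}{\cd_{W^2\cd\overline k}A}+\tfrac12|W\cd\overline k|^2\tr(W)$. In particular one must check that the contribution of $D^2_{yy}k\leq 0$ enters with the right sign to produce the inequality (this is the analogue of the ``good term'' discarded in deriving \eqref{eq:evolveinscribedk}), and that no spurious terms involving $\cd A$ of the wrong sign survive. A secondary technical point is the matching of the two regimes along $\pd\overline M$, where the two formulas for $\overline k$ agree but $W$ degenerates; handling this cleanly in the viscosity framework — rather than by an ad hoc limiting argument — is where some care is needed.
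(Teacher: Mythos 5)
Your overall strategy (differentiate the two-point function $k(x,y)$, use the first- and second-order maximality conditions in $y$, reorganize into divergence form) is the right one and matches the paper, but two points need correction.

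First, the claim that $\overline k$ is smooth on $\overline M$ because the maximizing $y=y(x)$ depends smoothly on $x$ via the implicit function theorem is not justified and is in fact false in general: the maximizer need not be unique, and $D^2_{yy}k$ need not be invertible at a maximizer even when $\overline k>\kappa_n$. This is precisely why the proposition is stated in the viscosity and distributional senses. The paper never differentiates $\overline k$ directly; instead it takes a smooth upper support $\varphi$ for $\overline k$ at $x_0\in\overline M$, observes that $\varphi(x,y):=\varphi(x)$ is then an upper support for $k$ at $(x_0,y_0)$ for \emph{some} maximizer $y_0$, and differentiates the smooth functions $\varphi$ and $k$ along directions of the form $\pd_{x^i}+\Lambda_i{}^p\pd_{y^p}$, optimizing over the matrix $\Lambda$ (ultimately $\Lambda=XY^{-1}$ with $X=\overline k\,\mathrm{I}-A^x$, $Y=\overline k\,\mathrm{I}-A^y$). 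Your plan to differentiate $k(x,y(x))$ directly would need the smooth-dependence claim you have not established, so as written it has a gap; you should replace it with the support-function argument.

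Second, your entire second paragraph about handling points where $\overline k(x)=\kappa_n(x)$ is a misreading of the statement: the proposition only asserts the inequality on $\overline M=\{\overline k>\kappa_n\}$, so there is nothing to prove at such points and no ``matching of two regimes along $\pd\overline M$'' is required. The only thing the strict inequality is used for is to guarantee the existence of $y_0\neq x_0$ with $\overline k(x_0)=k(x_0,y_0)$ (so that $k$ is smooth near $(x_0,y_0)$) and, for the distributional statement, to produce a smooth \emph{lower} support $\phi(x):=k(x,y_0)$, after which local semiconcavity and Alexandrov's theorem finish the argument exactly as in Proposition~\ref{prop:evolveG} --- this last part of your plan is correct. (Also, the factor on the left-hand side is indeed $\tfrac12 H$, not $\tfrac12 H^{-1}$.)
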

\begin{proof}
We first show that the inequality holds in the viscosity sense. So fix $x_0\in \overline M$ and let $\varphi \in C^\infty(B_{r}(x_0))$, $r>0$, be an upper support for $\overline k$ at $x_0$; that is,
\[
\varphi\geq \overline k\quad \text{on}\quad  B_{r}(x_0) \quad\text{and}\quad \varphi(x_0)=\overline k(x_0)\,.
\]
Now, since $\overline k(x_0)>\kappa_n(x_0)$, there is a point $y_0\in M\setminus\{x_0\}$ such that $\overline k(x_0)=k(x_0,y_0)$. Choosing $r$ possibly smaller (namely, so that $B_r(x_0)\cap B_r(y_0)=\emptyset$), we can arrange that $k\in C^\infty(B_{r}(x_0)\times B_{r}(y_0))$. Thus, the function $\varphi(x,y):=\varphi(x)$ is an upper support for $k$ at $(x_0,y_0)$. Since both functions are smooth, this implies
\bann
-\cd^2\varphi\leq -\cd^2k
\eann
at $(x_0,t_0)$, where the Hessians are over $M^n\times M^n$. To estimate the right hand side, choose local orthonormal coordinates $\{x^i\}$ near $x_0$ and $\{y^i\}$ near $y_0$ and, for any $n\times n$ matrix $\Lambda$, consider
\bann
\cd_{\pd_{x^i}+{\Lambda_i}^p\pd_{y^p}}k={} \frac{2}{d^2}\left(\inner{\pd^x_i-{\Lambda_i}^p\pd^y_p}{\nu_x-kdw}+\inner{dw}{\W^x(\pd_{x^i})}\right)\,,
\eann
where we have defined
\bann
d(x,y,t):=\norm{\X(x,t)-\X(y,t)}\,;&\quad w(x,y,t):= \frac{\X(x,t)-\X(y,t)}{d}\,,
\eann
and
\bann
\partial^x_i:= \frac{\partial \X}{\partial x^i}\,;&\quad \partial^y_i:= \frac{\partial \X}{\partial y^i}\,,
\eann
with sub- and super-scripts $x$ and $y$ denoting quantities relating to the first and second factors respectively.

We now compute the second derivatives.
\ba\label{eq:D2k}
\cd_{\pd_{x^j}+{\Lambda_j}^p\pd_{y^p}}&\cd_{\pd_{x^i}+{\Lambda_i}^p\pd_{y^p}}k\nonumber\\
={}& \frac{2}{d^2}\Big\{\inner{-\W^x{}_{ij}\nu_x +{\Lambda_i}^p{\Lambda_j}^q\W^y{}_{pq}\nu_y}{\nu_x-kdw}\nonumber\\
{}& +\inner{\pd^x_i-{\Lambda_i}^p\pd^y_p}{{\W^x_j}^q\pd^x_q}-\cd_{\pd_{x^j}+{\Lambda_j}^q\pd_{y^q}}k\inner{\pd^x_i-{\Lambda_i}^p\pd^y_p}{dw}\nonumber\\
{}&-k\inner{\pd^x_i-{\Lambda_i}^p\pd^y_p}{\pd^x_j-{\Lambda_j}^q\pd^y_q}\nonumber\\
{}&+\inner{\pd^x_j-{\Lambda_j}^q\pd^y_q}{\W^x_i{}^p\pd^x_p}+\inner{dw}{\cd \W^x_{ij}-(\W^x)^2_{ij}\nu_x}\nonumber\\
&{}-\cd_{\pd_{x^i}+{\Lambda_i}^p\pd_{y^p}}k\inner{\pd^x_j -{\Lambda_j}^q\pd^y_q}{dw}\Big\}\,.
\ea

Next, we use the vanishing of the $y$-derivatives at $y_0$ to determine the tangent plane to $M^n$ at $y_0$:
\begin{lem}[See {\cite[Lemma 6]{ALM13}}]\label{lem:ytangent}
Fix $x_0\in M^n$ and suppose that $\overline k(x_0)=k(x_0,y_0)$ for some $y_0\in M^n\setminus\{x_0\}$. Then
$$
\nu_y=\nu_x-kdw
$$
at $(x_0,y_0)$.
\end{lem}
\begin{proof}[Proof of Lemma \ref{lem:ytangent}]
Since there is an inscribed ball $B\subset\Omega$ of radius $1/k$ touching $\pd\Omega$ at $x_0$ and $y_0$, we obtain
\bann
\nu(y_0)={}& k(x_0,y_0)\lb y_0-\lb x_0- \frac{1}{k(x_0,y_0)}\nu(x_0)\rb\rb \\
={}&(\nu_x-dkw)|_{(x_0,y_0)}\,.
\eann
\end{proof}

Thus, the tangent plane at $y_0$ is the reflection of the tangent plane at $x_0$ about the plane orthogonal to $y_0-x_0$. In particular, we can choose the basis at $y_0$ to be the reflection of the basis at $x_0$:
\ba\label{eq:reflectingtangentplanes}
\pd^y_i=\pd_i^x-2\inner{\pd_i^x}{w}w\quad\text{at}\quad (x_0,y_0)\,.
\ea

Recall now that
\ba\label{eq:Dk}
\pd_i^xk={}&-\frac{2}{d^2}\inner{dw}{(k\mathrm{I}-A^x)\pd^x_i}\,.
\ea
If we choose the basis at $x_0$ so that $A^x$ is diagonal, then
\bann
\frac{2}{d^2}\inner{dw}{\pd_{x^i}}={}&-\frac{\pd_i^xk}{k-\kappa_i^x}\,.
\eann
In particular, this implies (at $(x_0,y_0)$)
\ba\label{eq:dw}
dw={}&\inner{dw}{\nu_x}\nu_x+\sum_{i=1}^n\inner{dw}{\pd^x_i}\pd^x_i\nonumber\\
={}&\frac{d^2}{2}\lb k\nu_x-\sum_{i=1}^n\frac{\pd_{x^i}k}{k-\kappa^x_i}\pd^x_i\rb
\ea
so that
\ba\label{eq:d}
\frac{2}{d^2}=\frac{1}{2}\lb k^2+\sum_{i=1}^n\frac{(\pd_{x^i}k)^2}{(k-\kappa^x_i)^2}\rb\,.
\ea

Applying Lemma \ref{lem:ytangent} and equations \eqref{eq:reflectingtangentplanes}, \eqref{eq:dw} and \eqref{eq:d} to \eqref{eq:D2k} yields, after some calculation,
\ba\label{eq:Dk2}
-\cd_i\cd_j\varphi\leq{}&-\cd_{\pd_{x^j}+{\Lambda_j}^p\pd_{y^p}}\cd_{\pd_{x^i}+{\Lambda_i}^p\pd_{y^p}}k\nonumber\\
\leq{}& k(A^2_x)_{ij}-k^2A^x_{ij}-\cd_{\pd_{x^i}}k\frac{\cd_{\pd_{x^j}}k}{k-\kappa^x_j}-\cd_{\pd_{x^j}}k\frac{\cd_{\pd_{x^i}}k}{k-\kappa^x_i}\nonumber\\
{}&+\frac{1}{2}\lb k^2+\sum_{i=1}^n\frac{(\pd_{x^i}k)^2}{(k-\kappa^x_i)^2}\rb\lb X_{ij}-2\Lambda_i{}^pX_{pj}+\Lambda_i{}^p\Lambda_j{}^qY_{pq}\rb\nonumber\\
{}&+\sum_{p=1}^n\frac{\cd_{\pd_{x^p}}k}{k-\kappa^x_p}\cd_pA^x_{ij}
\ea
at $(x_0,y_0)$, where we have set $X:=k\mathrm{I}-A^x$ and $Y:=k\mathrm{I}-A^y$. Writing
\bann
kA^2_x-k^2A^x=-kA_xX=-k(k\mathrm{I}-X)X,
\eann
the zero order terms can be collected, up to a factor of two, into the matrix
\bann
Z:={}&k^2\lb X-2\Lambda X+\Lambda Y\Lambda^T-2\lb \mathrm{I}-\frac{X}{k}\rb X\rb\,.
\eann
If $Y$ is positive definite at $(x_0,y_0)$, the expression is optimized with the choice $\Lambda:=XY^{-1}$, in which case,
\bann
Z:={}&k^2\lb X-XY^{-1}X-2\lb \mathrm{I}-\frac{X}{k}\rb X\rb\\
={}&kX\lb \mathrm{I}-kY^{-1}+\mathrm{I}-kX^{-1}\rb X\\
={}&kX\lb (Y-k\mathrm{I})Y^{-1}+(X-k\mathrm{I})X^{-1}\rb X\\
={}&-kX\lb A^yY^{-1}+A^xX^{-1}\rb X\,.
\eann
We claim that
\bann
A^yY^{-1}\geq k^{-1}A^y\,.
\eann
Since each of the matrices in the expression can be mutually diagonalized, it suffices to show that
\bann
\frac{\kappa^y_i/k}{1-\kappa^y_i/k}\geq \kappa^y_i/k
\eann
for each $i$, which follows immediately from the fact that $\kappa^y_i/k<1$. Similarly, we obtain
\bann
A^xX^{-1}\geq k^{-1}A^x\,.
\eann
It follows that
\bann
\tr\big(WZW^T\big)\leq -k(H_x+H_y)\leq -kH_x\,,
\eann
where $W:=X^{-1}$. By a straightforward approximation argument, this must also hold when $Y$ is only non-negative definite at $(x_0,y_0)$. Returning to \eqref{eq:Dk2}, we conclude that
\ba\label{eq:evolvekprelim}
\frac{1}{2}H\leq{}&\tr\Big\{ \cd^2\overline k\lb W\,\cdot\,,W\,\cdot\,\rb+\cd_{W\cd\overline k}\,A\lb W\,\cdot\,,W\,\cdot\,\rb\nonumber\\
{}&
-2W^2(\cd\overline k)\otimes W(\cd\overline k)+\frac{1}{2}\vert W\cd\overline k\vert^2W\Big\}
\ea
in the viscosity sense. To obtain \eqref{eq:kSimons}, observe that
\bann
\cd_kW=
W\cdot \cd_kA\cdot W-\cd_k\overline k W^2
\eann
so that
\bann
\Div(W^2\cd\overline k)={}&\tr\lb\cd^2\overline k(W\cdot,W\cdot)\rb+\inner{W}{\cd_{W^2\cd\overline k}A}\\
{}&+\inner{W^2}{\cd_{W\cd\overline k}A}-2\inner{W^3(\cd\overline k)}{\cd\overline k}\,.
\eann
The final two terms cancel with the second and third terms of \eqref{eq:evolvekprelim}, which yields the claim.


To see that the inequality is satisfied in the distributional sense it now suffices, as in the proof of Proposition \ref{prop:evolveG}, to find, for each $x_0\in \overline M$, a smooth lower support $\phi\in C^\infty(B_{r}(x_0))$, $r>0$, for $\overline k$ at $x_0$. Since $\overline k(x_0)>\kappa_n(x_0)$, we have $\overline k(x_0)=k(x_0,y_0)$ for some $y_0\in M$, so that we may take $\phi(x):=k(x,y_0)\leq \overline k(x)$ on a small ball about $x_0$ (i.e. one not containing $y_0$).
\end{proof}

Finally, we recall the following differential inequality for $\overline k$ under mean curvature flow.
\begin{lem}\label{lem:evolvek}
Let $X:M^n\times[0,T)\to\R^{n+1}$ a smooth mean curvature flow of properly embedded hypersurfaces $M^n_t=\pd\Omega_t^{n+1}$. Then
\bann
(\pd_t-\Delta)\overline k\leq |A|^2\overline k-2\inner{\cd\overline k}{W(\cd\overline k)}
\eann
on the set $\overline U:= \{(x,t)\in M^n\times(0,T):\overline k(x,t)>\kappa_n(x,t)\}$ in both the viscosity and the distributional sense, where the tangent bundle endomorphism $W\in \Gamma(\overline U,T^\ast M\otimes TM)$ is defined by $W^{-1}:=\overline k\,\mathrm{I}-A$.
\end{lem}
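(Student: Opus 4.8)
The plan is to establish the inequality in the viscosity sense by the two-point maximum-principle method, in close parallel with the proof of Proposition \ref{prop:kSimons}, and then to upgrade it to the distributional statement exactly as was done for $G$ in the proof of Proposition \ref{prop:evolveG}. For the latter, observe that on $\overline U$ the inscribed curvature is realised, near any point, by an interior contact point $y_0\neq x_0$ (since $\overline k>\kappa_n$ there), so locally $\overline k=\sup_{y\in K}k(\cdot,y,\cdot)$ for a compact $K$ bounded away from the diagonal; hence $\overline k$ is locally a supremum of smooth functions with locally bounded Hessians, and therefore locally semiconvex. By Alexandrov's theorem it is then twice differentiable almost everywhere, and testing the pointwise viscosity inequality against second-order Taylor polynomials at such points (perturbed by $\varepsilon(|x-x_0|^2-(t-t_0))$) together with semiconvexity yields the distributional inequality.

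For the viscosity inequality, fix $(x_0,t_0)\in\overline U$, a smooth spacetime upper support $\varphi$ for $\overline k$ at $(x_0,t_0)$, and a contact point $y_0\neq x_0$ with $\overline k(x_0,t_0)=k(x_0,y_0,t_0)$. After shrinking the domain, $\varphi(x,t)$ is a smooth spacetime upper support for the two-point function $k$ at $(x_0,y_0,t_0)$; hence, at that point, $\pd_t\varphi\leq\pd_t k$ and $\cd^2\varphi\geq\cd^2 k$ on $M^n\times M^n$. Since $\varphi$ is independent of $y$, for any matrix $\Lambda$ and the directions $v_i:=\pd_{x^i}+{\Lambda_i}^p\pd_{y^p}$ (with $\{\pd_{x^i}\}$ an $A^x$-principal frame and $\{\pd_{y^p}\}$ its reflection) we get $\Delta\varphi=\sum_i\cd^2\varphi(v_i,v_i)\geq\sum_i\cd^2k(v_i,v_i)$, so $(\pd_t-\Delta)\varphi\leq\pd_t k-\sum_i\cd^2 k(v_i,v_i)$ at $(x_0,y_0,t_0)$. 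The second sum is precisely the trace of \eqref{eq:Dk2}: tracing that inequality, invoking Lemma \ref{lem:ytangent} and the identities \eqref{eq:dw}, \eqref{eq:d}, and taking the optimal choice $\Lambda=XY^{-1}$ (with $X:=\overline k\,\mathrm{I}-A^x=W^{-1}$ and $Y:=\overline k\,\mathrm{I}-A^y\geq 0$, the degenerate case handled by the approximation argument of Proposition \ref{prop:kSimons}), one obtains, in the $A^x$-principal frame,
\begin{align*}
-\Delta\varphi\leq k\,\tr\!\big((A^x)^2\big)-k^2H_x-2\sum_i\frac{(\cd_i k)^2}{k-\kappa^x_i}+\frac{2}{d^2}\,\tr\!\big(X-XY^{-1}X\big)+\sum_i\frac{\cd_i k\,\cd_i H}{k-\kappa^x_i}\,.
\end{align*}

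A direct computation of the time derivative, using the evolution equations $\pd_t X=\vec H=-H\nu$ and $\pd_t\nu=\cd H$, Lemma \ref{lem:ytangent}, and \eqref{eq:dw}, \eqref{eq:d}, gives
\begin{align*}
\pd_t k=k^2H_x+\frac{2}{d^2}(H_y-H_x)-\sum_i\frac{\cd_i k\,\cd_i H}{k-\kappa^x_i}\,.
\end{align*}
Adding the two displays, the terms $k^2H_x$ and $\sum_i(k-\kappa^x_i)^{-1}\cd_i k\,\cd_i H$ cancel, and using $H_y-H_x+\tr(X)=2\tr(X)-\tr(Y)$ one is left with
\begin{align*}
(\pd_t-\Delta)\varphi\leq k\,\tr\!\big((A^x)^2\big)-2\sum_i\frac{(\cd_i k)^2}{k-\kappa^x_i}+\frac{2}{d^2}\Big(2\tr(X)-\tr(Y)-\tr\!\big(XY^{-1}X\big)\Big)\,.
\end{align*}
Finally, for fixed positive definite $Y$ the function $X\mapsto\tr(XY^{-1}X)-2\tr(X)+\tr(Y)$ is a positive-definite quadratic form on self-adjoint endomorphisms whose unique critical point is $X=Y$, where it vanishes; hence it is non-negative, so the last bracket above is $\leq0$. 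Since $d^{-2}>0$, and $\cd\overline k=\cd\varphi=\cd_x k$ and $W=X^{-1}$ at $(x_0,t_0)$, this is exactly $(\pd_t-\Delta)\varphi\leq|A|^2\overline k-2\inner{\cd\overline k}{W(\cd\overline k)}$, which is the asserted viscosity inequality.

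I expect the main obstacle to be the bookkeeping: carrying out the $\pd_t k$ computation carefully and verifying that it cancels exactly the spurious zero- and first-order terms produced by tracing \eqref{eq:Dk2}, together with the matrix inequality $\tr(XY^{-1}X)\geq2\tr(X)-\tr(Y)$ in the non-commuting case and the treatment of degenerate $Y$ by approximation. The spatial side, by contrast, is already essentially contained in the proof of Proposition \ref{prop:kSimons} (the choice of reflected frame at $y_0$, the use of $Y\geq0$, and equation \eqref{eq:Dk2}), so the genuinely new input needed here is just the evolution in time of the two-point function $k$.
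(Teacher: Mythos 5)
Your proof is correct and takes the same two-point maximum principle route that the paper implicitly relies on: the paper itself does not prove Lemma \ref{lem:evolvek} but instead cites \cite{ALM13,NC2,Br15}, and your argument is a correct, self-contained reconstruction in that style, closely parallel to the paper's own proof of Proposition \ref{prop:kSimons}. I verified the essential steps: the trace of \eqref{eq:Dk2} (with $\Lambda=XY^{-1}$) gives precisely your first display; the time-derivative computation using $\partial_t X=-H\nu$, $\partial_t\nu=\nabla H$, Lemma \ref{lem:ytangent} and \eqref{eq:dw}--\eqref{eq:d} gives precisely $\partial_t k=k^2H_x+\frac{2}{d^2}(H_y-H_x)-\sum_i\frac{\nabla_i k\,\nabla_i H}{k-\kappa_i^x}$; the $k^2H_x$ and mixed $\nabla k\cdot\nabla H$ terms cancel on addition; $H_y-H_x+\tr(X)=2\tr(X)-\tr(Y)$; and the matrix inequality $\tr(XY^{-1}X)-2\tr(X)+\tr(Y)=\tr\bigl((X-Y)Y^{-1}(X-Y)\bigr)\geq 0$ closes the argument. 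The distributional upgrade by semiconvexity is the same mechanism the paper uses in Propositions \ref{prop:evolveG} and \ref{prop:kSimons} (existence of smooth lower supports, Alexandrov's theorem), just phrased via ``local supremum of smooth functions with uniformly bounded Hessians,'' which is equivalent. The only thing worth flagging is that the paper's cited version of the evolution inequality (e.g.\ in \cite{Br15}) also needs the observation that the gradient term $-2\inner{\nabla\overline k}{W(\nabla\overline k)}$ has a definite sign on $\overline U$ when estimates are needed, but that is not part of the lemma as stated and you do not claim it; your proposal proves exactly what is asserted.
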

\begin{proof}
See \cite{ALM13,NC2} and \cite{Br15}.
\end{proof}

\section{Proof of Theorem \ref{thm:pinching}}\label{sec:pinching}

In this section, we prove Theorem \ref{thm:pinching}. By time-translation and scaling invariance of \eqref{eq:MCF}, it suffices to consider the case $t_0=0$ and $R=1$. Let $\Lambda$ be the convex hull of the cylindrical points in $\Gamma$ and let $G_1:M^n\times[0,T)\to\R$ be the distance of $A$ to $\Lambda$. Then
\bann
G_1(x,t):=\max\{-d_{\Lambda}(A_{(x,t)}),0\}\,.
\eann
Thus, by Lemma \ref{eq:evolveG}, $G_1$ satisfies
\bann
(\pd_t-\Delta)G_1\leq |A|^2G_1
\eann
in the distributional sense. Using \eqref{eq:evolvenormA} and Lemma \ref{lem:gradterm}, we can modify $G_1$ to obtain a useful gradient term. Indeed, by Proposition \ref{prop:evolveG}, there is a constant $L=L(n,\Gamma_0)<\infty$ such that $|A|\leq LH$ under the flow. Setting $G_2:=2LH-|A|$, we find
\bann
(\pd_t-\Delta)G_2=|A|^2G_2+\frac{1}{2|A|^3}|A\otimes \cd A-\cd A\otimes A|^2\,.
\eann
If we set
\ba\label{eq:uniformGmconvexity}
G:=
G_1^2/G_2
\ea
then a straightforward computation yields
\ba\label{eq:evolveGgradient}
(\pd_t-\Delta)G
\leq{}& |A|^2G-\frac{G}{2G_2|A|^3}|A\otimes \cd A-\cd A\otimes A|^2
\ea
on the set $\{(x,t)\in M^n\times [0,T):G(x,t)>0\}$ in the distributional sense. Estimating
\bann
G_2|A|\leq 2L^2H^2
\eann
we obtain, from Lemma \ref{lem:gradterm},
\ba\label{eq:evolveGeps}
(\pd_t-\Delta)G\leq |A|^2G-\gamma_1G\frac{|\cd A|^2}{H^2}
\ea
on the set $U_\varepsilon:=\{(x,t)\in M^n\times[0,T):G(x,t)\geq \varepsilon H(x,t)\}$ for some $\gamma_1=\gamma_1(\Gamma_0,\varepsilon)$. 

Now consider, for any $\sigma\in(0,1)$, the function 
\bann
\Ges:=(G-\varepsilon H)H^{\sigma-1}\,.
\eann
We will prove the theorem by bounding $\Ges$ from above for some $\sigma\in(0,1)$.

First observe that, by \ref{eq:evolveGeps},
\ba\label{eq:evolveGes}
(\pd_t-\Delta)\Ges\leq{}&\sigma|A|^2\Ges-\gamma_1H^{\sigma-1}G\frac{|\cd A|^2}{H^2}-\sigma(1-\sigma)\Ges\frac{|\cd H|^2}{H^2}\nonumber\\
{}&+2(1-\sigma)\inner{\cd\Ges}{\frac{\cd H}{H}}\nonumber\\
\leq{}&\sigma|A|^2\Ges-\gamma_1\Ges\frac{|\cd A|^2}{H^2}+2|\cd\Ges|\frac{|\cd H|}{H}
\ea
on $U_\varepsilon$ in the distributional sense. Setting $\Gesp:=\max\{\Ges,0\}$ and applying \eqref{eq:evolvemu}, we obtain (for almost every $t\in[0,T)$)
\bann
\frac{d}{dt}\int \Gesp^pd\mu={}&p\int \Gesp^{p-1}\pd_t\Ges\,d\mu-\int\Gesp^pH^2
\,d\mu\,.
\eann
Discarding the second term on the right, applying \eqref{eq:evolveGes} and integrating the diffusion term by parts, we obtain 
\bann
\frac{d}{dt}\int \Gesp^pd\mu\leq{}&-p(p-1)\int \Gesp^{p-2}|\cd\Ges|^2d\mu-\gamma_1p\int\Gesp^p\frac{|\cd A|^2}{H^2}d\mu\\
{}&+2p\int\Gesp^p\frac{|\cd\Ges|}{\Ges}\frac{|\cd H|}{H}\,d\mu+\sigma p\int\Gesp^p|A|^2\,d\mu\,.
\eann
Young's inequality now yields
\ba\label{eq:k0estmconvexity}
\frac{d}{dt}\int \Gesp^pd\mu\leq {}&-\lb p^2-p^{\frac{3}{2}}-p\rb\int \Gesp^{p-2}|\cd\Ges|^2\,d\mu\nonumber\\
{}&-\lb\gamma_1p-p^{\frac{1}{2}}\rb\int\Gesp^p\frac{|\cd A|^2}{H^2}\,d\mu\nonumber\\
{}&+\sigma p\int\Gesp^p|A|^2\,d\mu\,.
\ea

To estimate the final term, we apply the Poincar\'e inequality, Proposition \ref{prop:Poincare}, to the function $u^2:=\Gesp^p$ with $r=p^{\frac{1}{2}}$. Noting that $|\cd u|^2=\frac{p^2}{4}\Gesp^{p-2}|\cd\Ges|^2$, this yields
\ba\label{eq:Poincareancient}
\gamma_2\int \Gesp^p|A|^2\,d\mu\leq{}& \frac{p^\frac{3}{2}}{4}\int \Gesp^{p-2}|\cd \Ges|^2\,d\mu\nonumber\\
{}&+\lb p^{\frac{1}{2}}+1\rb\int \Gesp^p\frac{|\cd A|^2}{H^2}\,d\mu\,,
\ea
where $\gamma_2>0$ depends only on $n$, $\Gamma_0$ and $\varepsilon$. Applying this to \eqref{eq:k0estmconvexity} yields
\bann
\frac{d}{dt}\int \Gesp^pd\mu\leq {}&-\lb p^2-p^{\frac{3}{2}}-p-\gamma_2^{-1}\sigma p^{\frac{5}{2}}\rb\int \Gesp^{p-2}|\cd\Ges|^2\,d\mu\nonumber\\
{}&-\lb\gamma_1p-p^{\frac{1}{2}}-\gamma_2^{-1}\sigma p(p^{\frac{1}{2}}+1)\rb\int\Gesp^p\frac{|\cd A|^2}{H^2}\,d\mu\,.
\eann
For large $p$ and $\sigma\sim p^{-\frac{1}{2}}$ the right hand side is non-positive.
\begin{prop}\label{prop:mconvexityLpest}
There exists $\ell=\ell(n,\Gamma_0,\varepsilon)>0$ such that, for almost every $t\in [0,T)$,
\bann
\frac{d}{dt}\int \Gesp^pd\mu\leq 0
\eann
for all $p>\ell^{-1}$ and $\sigma\leq \ell p^{-\frac{1}{2}}$.
\end{prop}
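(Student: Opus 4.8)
The proof reduces to an elementary balancing of coefficients in the differential inequality already established above. Recall that, for almost every $t\in[0,T)$, we have shown
\[
\frac{d}{dt}\int \Gesp^pd\mu\leq -a(p,\sigma)\int \Gesp^{p-2}|\cd\Ges|^2\,d\mu-b(p,\sigma)\int\Gesp^p\frac{|\cd A|^2}{H^2}\,d\mu\,,
\]
where $a(p,\sigma):=p^2-p^{\frac{3}{2}}-p-\gamma_2^{-1}\sigma p^{\frac{5}{2}}$ and $b(p,\sigma):=\gamma_1p-p^{\frac{1}{2}}-\gamma_2^{-1}\sigma p(p^{\frac{1}{2}}+1)$, with $\gamma_1,\gamma_2>0$ depending only on $n$, $\Gamma_0$ and $\varepsilon$. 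Since both integrals on the right-hand side are non-negative, it suffices to exhibit $\ell=\ell(n,\Gamma_0,\varepsilon)>0$ such that $a(p,\sigma)\geq 0$ and $b(p,\sigma)\geq 0$ hold simultaneously whenever $p>\ell^{-1}$ and $0<\sigma\leq \ell p^{-\frac12}$.

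I would first dispose of the gradient coefficient. Inserting the constraint $\sigma\leq \ell p^{-1/2}$ gives $\gamma_2^{-1}\sigma p^{5/2}\leq \gamma_2^{-1}\ell\, p^2$, so that $a(p,\sigma)\geq (1-\gamma_2^{-1}\ell)p^2-p^{3/2}-p$; choosing $\ell\leq \gamma_2/2$ bounds the leading coefficient below by $\tfrac12$, and then $a(p,\sigma)\geq \tfrac12 p^2-p^{3/2}-p\geq 0$ once $p$ exceeds an absolute constant. For the reaction coefficient, the same constraint yields $\gamma_2^{-1}\sigma p(p^{1/2}+1)\leq \gamma_2^{-1}\ell\,(p+p^{1/2})$, so $b(p,\sigma)\geq (\gamma_1-\gamma_2^{-1}\ell)p-(1+\gamma_2^{-1}\ell)p^{1/2}$; taking $\ell\leq \tfrac12\gamma_1\gamma_2$ (which also forces $\ell\leq\gamma_2/2\leq\gamma_2$, so $1+\gamma_2^{-1}\ell\leq 2$) bounds this below by $\tfrac12\gamma_1 p-2p^{1/2}$, which is non-negative once $p\geq 16\gamma_1^{-2}$.

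It then only remains to choose a single $\ell$ small enough --- depending only on $\gamma_1$ and $\gamma_2$, hence only on $n$, $\Gamma_0$ and $\varepsilon$ --- that subsumes the smallness conditions $\ell\leq\gamma_2/2$ and $\ell\leq\tfrac12\gamma_1\gamma_2$ above and, moreover, forces $\ell^{-1}$ to exceed both of the auxiliary largeness thresholds on $p$ (the absolute constant and $16\gamma_1^{-2}$); for instance $\ell:=c\min\{1,\gamma_1^2,\gamma_1\gamma_2,\gamma_2\}$ for a suitable small absolute constant $c$ works. With this $\ell$, every pair $(p,\sigma)$ with $p>\ell^{-1}$ and $\sigma\in(0,\ell p^{-1/2}]$ satisfies $a(p,\sigma),b(p,\sigma)\geq 0$, whence $\frac{d}{dt}\int \Gesp^p\,d\mu\leq 0$, as required.

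I do not expect a genuine obstacle at this stage: all of the substantive analysis --- the refined gradient term of Lemma \ref{lem:gradterm}, the auxiliary function \eqref{eq:uniformGmconvexity}, and the geometric Poincar\'e inequality of Proposition \ref{prop:Poincare} --- has already been deployed to produce the displayed inequality. The only point demanding a modicum of care is the order of quantifiers: $\ell$ must be fixed before $p$, and the single parameter $\ell$ must simultaneously make the two coefficients non-negative and push the threshold $\ell^{-1}$ past every incidental largeness requirement encountered along the way.
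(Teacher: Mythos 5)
Your proposal is correct and follows exactly the route the paper takes: the paper derives the same differential inequality with coefficients $a(p,\sigma)$ and $b(p,\sigma)$ and then simply remarks that ``for large $p$ and $\sigma\sim p^{-1/2}$ the right hand side is non-positive,'' leaving the coefficient bookkeeping implicit. You have merely fleshed out that one-line remark (carefully tracking the dependence of $\ell$ on $\gamma_1,\gamma_2$ and hence on $n,\Gamma_0,\varepsilon$, and making sure the threshold $\ell^{-1}$ dominates the auxiliary largeness requirements on $p$), which is precisely what is needed.
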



Just as in \cite{Hu84}, we can now use the Michael--Simon Sobolev inequality and Stampacchia's lemma to extract an $L^\infty$-bound for $\Gesp$ from the $L^p$-estimate.This yields, for any $\varepsilon>0$, constants $\sigma_0=\sigma_0(n,\Gamma_0,\varepsilon)\in(0,1)$ and $C=C(n,\Gamma_0,\Theta,\varepsilon)<\infty$ such that
\ba
G \leq{}&\varepsilon H+CH^{1-\sigma}\label{eq:sigma0}
\ea
for any $\sigma\in(0,\sigma_0]$. It follows that

\ba
G\leq{}&2\varepsilon H+C_\varepsilon\,,\label{eq:asymptoticGest}
\ea
where $C_\varepsilon=C_\varepsilon(n,\Gamma_0,\Theta,\varepsilon)$. 

To complete the proof, we fix any $\eta>0$ and consider two cases. First, observe that
\bann
G_1\leq \eta H
\eann
whenever $G_1\leq \frac{\eta}{2L}G_2$. On the other hand, if we set $\varepsilon:=\eta^2/4L$, then \eqref{eq:asymptoticGest} yields, wherever $G_1>\frac{\eta}{2L} G_2$,
\bann
G_1\leq{}& \frac{G_2}{G_1}(2\varepsilon H+C_\varepsilon)\\
\leq {}&\frac{2L}{\eta}(2\varepsilon H+ C_\varepsilon)\\
\leq{}& \eta H+C_\eta\,.
\eann
This completes the proof of Theorem \ref{thm:pinching}.

\section{Proof of Theorem \ref{thm:inscribed}}\label{sec:inscribed}

We now prove Theorem \ref{thm:inscribed}. 
Once again, it suffices to consider the case $t_0=0$ and $R=1$. So fix $m\in\{0,\dots,n-2\}$ and set $G_1:=\overline k-\frac{1}{n-m}H$. Recalling Lemma \ref{lem:evolvek}, we obtain
\bann
(\pd-\Delta)G_1\leq |A|^2G_1+Q_1(\cd \overline k)
\eann
on the set $\overline U:=\{(x,t)\in M^n\times(0,T):\overline k(x,t)>\kappa_n(x,t)\}$ in the distributional sense, where, in a principal frame,
\bann
Q_1(\cd \overline k)={}&-2\sum_{i=1}^n\frac{(\cd_i\overline k)^2}{\overline k-\kappa_i}\,.
\eann
This gradient term will be used to control terms involving $\cd\overline k$. 
\begin{lem}\label{lem:kQestmconvex}
There is, for any $\varepsilon>0$, a constant $\gamma=\gamma(n,\varepsilon)>0$ such that
\bann
-2\sum_{i=1}^n\frac{(\cd_i\overline k)^2}{\overline k-\kappa_i}\leq -\gamma\frac{|\cd \overline k|^2}{G}
\eann
on the set $\{(x,t)\in M\times(0,T):G(x,t)\geq\varepsilon H(x,t)\}$.
\end{lem}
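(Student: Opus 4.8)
The plan is to reduce the asserted differential inequality to the elementary pointwise bound
\[
\overline k-\kappa_1\leq C\,G\qquad\text{on }\{G\geq\varepsilon H\},
\]
for a constant $C$ depending only on $\varepsilon$ and the structural data fixed in Theorem \ref{thm:inscribed}, and then to exploit the non-degeneracy of the weights $\overline k-\kappa_i$ on $\overline U$.

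First I would work at a point of $\overline U$ lying in $\{G\geq\varepsilon H\}$ (at which $\overline k$ is twice differentiable, so that $\cd\overline k$ and a principal frame for $A$ are unambiguous) and order the principal curvatures $\kappa_1\leq\dots\leq\kappa_n$. On $\overline U$ we have $\overline k>\kappa_n\geq\kappa_i$ for every $i$, so each weight $\overline k-\kappa_i$ is strictly positive and $\overline k-\kappa_i\leq\overline k-\kappa_1$. Hence
\[
-2\sum_{i=1}^n\frac{(\cd_i\overline k)^2}{\overline k-\kappa_i}\leq-\frac{2}{\overline k-\kappa_1}\sum_{i=1}^n(\cd_i\overline k)^2=-\frac{2}{\overline k-\kappa_1}\,|\cd\overline k|^2\,,
\]
so it suffices to show $\overline k-\kappa_1\leq C\,G$ on $\{G\geq\varepsilon H\}$.

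For this I would invoke the two a priori bounds in force under the hypotheses of Theorem \ref{thm:inscribed}. Condition (3) places the initial Weingarten curvature in a pinching cone $\Gamma_0$ (the $(m+1)$-convexity condition with pinching constant $\alpha$), so by Proposition \ref{prop:evolveG}, exactly as in the proof of Theorem \ref{thm:pinching}, there is $L=L(n,\Gamma_0)<\infty$ with $|A|\leq LH$ on all of $M^n\times[0,T)$; and condition (4) gives $\overline k\leq\varLambda H$ at $t=0$, which is preserved since $\overline k-\varLambda H$ is a subsolution of the Jacobi equation by \eqref{eq:evolveinscribedk} and \eqref{eq:evolveH}. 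Therefore $\overline k-\kappa_1\leq\overline k+|A|\leq(\varLambda+L)H$ everywhere. On $\{G\geq\varepsilon H\}$ we have, by definition of the set, $H\leq\varepsilon^{-1}G$, hence $\overline k-\kappa_1\leq(\varLambda+L)\varepsilon^{-1}G$. Substituting into the display above yields the claim with $\gamma=2\varepsilon/(\varLambda+L)$.

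I do not expect a genuine obstacle: the content of the lemma is precisely that, away from the degenerate locus $\{\overline k=\kappa_n\}$ and on the region where $G$ is not small relative to $H$, the weighted gradient sum $\sum_i(\cd_i\overline k)^2/(\overline k-\kappa_i)$ dominates $|\cd\overline k|^2/G$. The only points requiring care are bookkeeping ones: that the inequality is read distributionally, so every manipulation is pointwise at a.e.\ point; and that the two preservation statements giving $|A|\leq LH$ and $\overline k\leq\varLambda H$ on all of $M^n\times[0,T)$ are correctly quoted from \S\ref{sec:prelims}.
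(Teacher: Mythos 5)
Your argument is correct, but it establishes a weaker statement than the lemma asserts and imports more structure than is needed. You bound every weight by the largest one, $\overline k-\kappa_1$, and then estimate $\overline k-\kappa_1\leq\overline k+|A|\leq(\varLambda+L)H$ via the preserved pinching $|A|\leq LH$ and non-collapsing $\overline k\leq\varLambda H$, arriving at $\gamma=2\varepsilon/(\varLambda+L)$. Since $L=L(n,\alpha,\varLambda)$, this $\gamma$ depends on $\alpha$ and $\varLambda$, whereas the lemma claims $\gamma=\gamma(n,\varepsilon)$. The paper gets the stated dependence with no a priori inputs beyond the definition of $\overline U$: since $\overline k>\kappa_j$ for every $j$,
\[
\overline k-\kappa_i\leq\sum_{j=1}^n(\overline k-\kappa_j)=n\overline k-H=nG_1+\tfrac{m}{n-m}H\,,\quad G_1:=\overline k-\tfrac{1}{n-m}H\,,
\]
so on $\{G_1\geq\varepsilon H\}$ one has $\overline k-\kappa_i\leq\lb n+\frac{m}{n-m}\varepsilon^{-1}\rb G_1$, whence $\gamma=2\lb n+\frac{m}{n-m}\varepsilon^{-1}\rb^{-1}$, depending only on $n$, $m\leq n-2$ and $\varepsilon$. (Note this identity is specific to $G_1$, while the lemma's display writes $G$, which the paper only defines afterward as $G_1^2/G_2$ --- a small notational slip in the source; your version survives either reading since you use only $H\leq\varepsilon^{-1}G$, but the paper's identity does not.) For the downstream iteration your weaker constant is harmless, as $\alpha$ and $\varLambda$ are among the fixed data, but the detour through the maximum principle for $|A|/H$ and $\overline k/H$ is unnecessary, and it obscures the fact that this lemma is a purely pointwise algebraic estimate.
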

\begin{proof} Since, for each $i$, $\overline k>\kappa_i$, we can estimate
\bann
\overline k-\kappa_i\leq n\overline k-H= nG+\frac{m}{n-m}H\leq \lb n+\frac{m}{n-m}\varepsilon^{-1}\rb G\,.
\eann
\end{proof}

To control terms involving $\cd A$, we can make use of Lemma \ref{lem:gradterm} by modifying $G_1$ as in the proof of Theorem \ref{thm:pinching}. So set $G_2:=2LH-|A|$, where $L=L(n,\alpha,\varLambda)$ is chosen so that
\bann
|A|\leq LH \quad \text{and} \quad G_1/G_2\leq 1\,.
\eann
Then the function
\ba\label{eq:inscribedG}
G:=
G_1^2/G_2
\ea
is well-defined and satisfies
\bann
(\pd_t-\Delta)G\leq{}& |A|^2G-4\frac{G}{G_2}\sum_{i=1}^n\frac{(\cd_i\overline k)^2}{\overline k-\kappa_i}\\
{}&-\frac{G}{2G_2|A|^3}|A\otimes \cd A-\cd A\otimes A|^2
\eann
on the set $\overline U$ in the distributional sense. By Lemmas \ref{lem:gradterm} and \ref{lem:kQestmconvex}, there is, for any $\varepsilon>0$, a constant $\gamma_1=\gamma_1(n,\alpha,\varLambda,\varepsilon)$ such that\footnote{Note that the cylindrical point $\mathrm{diag}(0,\dots,0,1)$ is ruled out by the initial pinching condition $\kappa_1+\dots+\kappa_{m+1}\geq \alpha H$, $m\in \{0,1,\dots,n-2\}$.}
\bann
(\pd_t-\Delta)G\leq |A|^2G-\gamma_1 G\lb\frac{|\cd\overline k|^2}{G^2}+\frac{|\cd A|^2}{H^2}\rb
\eann
on the set $\overline U_\varepsilon:=\overline U\cap \{(x,t)\in M^n\times(0,T):G(x,t)>\varepsilon H(x,t)\}$ in the distributional sense.

Next, fix $\varepsilon>0$ and, by \eqref{eq:sigma0}, constants $\sigma_0=\sigma_0(n,\alpha,\varepsilon)\in(0,\frac{1}{2})$ and $K=C(n,\alpha,\Theta,\varepsilon)<\infty$ such that 
\ba\label{eq:msmall}
\kappa_n(x,t)-\frac{1}{n-m}H(x,t)\leq \frac{\varepsilon}{2}H(x,t)+KH^{1-\sigma}
\ea
for all $(x,t)\in M\times[0,T)$ and $\sigma\in (0,\sigma_0)$, and define, for any $\sigma\in (0,\sigma_0)$,
$$
\Gesk:=(G-\varepsilon H)H^{\sigma-1}-K\,.
$$
For notational convenience, we also set
$$
\Ges:=(G-\varepsilon H)H^{\sigma-1}\,.
$$
Then, using \eqref{eq:msmall}, we find
\bann
\overline k-\kappa_n\geq{}&G_1-\frac{\varepsilon}{2}H-KH^{1-\sigma}\\
\geq{}&G-\frac{\varepsilon}{2}H-KH^{1-\sigma}\\
={}&H^{1-\sigma}\Gesk+\frac{\varepsilon}{2}H
\eann
so that, wherever $\Gesk\geq 0$,
\bann
\overline k-\kappa_n\geq{}& \frac{\varepsilon}{2}H\,.
\eann

We will show that $\Gesk$ is bounded from above for some $\sigma\in(0,\sigma_0)$. Observe first that $\Gesk$ satisfies the differential inequality
\ba\label{eq:evolvekGesmconvex}
(\pd_t-\Delta)\Gesk\leq{}& \sigma|A|^2\Ges-\gamma_1GH^{\sigma-1}\lb\frac{|\cd\overline k|^2}{G^2}+\frac{|\cd A|^2}{H^2}\rb\nonumber\\
{}&+2(1-\sigma)\inner{\cd\Ges}{\frac{\cd H}{H}}
\ea
on the support of $\Gesk$ in the distributional sense. 

Setting $\Geskp:=\max\{\Gesk,0\}$, we will prove the following analogue of Proposition \ref{prop:mconvexityLpest}.
\begin{prop}[$L^p$-estimate]\label{prop:kintestmconvex}
There exists a constant $\ell>0$, which depends only on  $n$, $\alpha$, $\varLambda$, and $\varepsilon$, such that
\bann
\frac{d}{dt}\int \Geskp^p\,d\mu\leq \sigma K^p\int |A|^2d\mu
\eann
for all $p\geq \ell^{-1}$ and $\sigma\leq \ell p^{-\frac{1}{2}}$.
\end{prop}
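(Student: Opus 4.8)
The plan is to follow the proof of Proposition~\ref{prop:mconvexityLpest} almost verbatim; the two genuinely new points are the shift by the constant $K$ in $\Gesk$, which is exactly what produces the term $\sigma K^p\int|A|^2\,d\mu$ on the right, and the use of Proposition~\ref{prop:kSimons} in place of the Poincar\'e inequality of Proposition~\ref{prop:Poincare} to absorb the reaction term (the latter is unavailable here, since $\overline k$ is a two-point quantity and the support of $\Geskp$ need not avoid the cylindrical points of $A$).

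First I would differentiate using \eqref{eq:evolvemu}, discard the nonpositive term $-\int\Geskp^pH^2\,d\mu$, insert \eqref{eq:evolvekGesmconvex}, and integrate the Laplacian by parts (noting $\cd\Geskp=\cd\Ges$ on $\{\Geskp>0\}$). This produces the good term $-p(p-1)\int\Geskp^{p-2}|\cd\Ges|^2\,d\mu$, the gradient terms $-\gamma_1p\int\Geskp^{p-1}GH^{\sigma-1}\bigl(\tfrac{|\cd\overline k|^2}{G^2}+\tfrac{|\cd A|^2}{H^2}\bigr)d\mu$, the cross term $2(1-\sigma)p\int\Geskp^{p-1}\inner{\cd\Ges}{\tfrac{\cd H}{H}}d\mu$, and the reaction term $\sigma p\int\Geskp^{p-1}|A|^2\Ges\,d\mu$. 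On the support $\Ges=\Gesk+K=\Geskp+K$, so $GH^{\sigma-1}=\Ges+\varepsilon H^{\sigma}\ge\Geskp$ and the gradient terms dominate $-\gamma_1p\int\Geskp^{p}\bigl(\tfrac{|\cd\overline k|^2}{G^2}+\tfrac{|\cd A|^2}{H^2}\bigr)d\mu$; the cross term is handled exactly as in the derivation of \eqref{eq:k0estmconvexity}, by Young's inequality and $|\cd H|\le c(n)|\cd A|$, at the cost of $p^{3/2}\int\Geskp^{p-2}|\cd\Ges|^2\,d\mu+c(n)p^{1/2}\int\Geskp^{p}\tfrac{|\cd A|^2}{H^2}\,d\mu$, which is absorbed for $p$ large.

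For the reaction term I would again use $\Ges=\Geskp+K$ on the support to write
\[
\sigma p\int\Geskp^{p-1}|A|^2\Ges\,d\mu=\sigma p\int\Geskp^{p}|A|^2\,d\mu+\sigma pK\int\Geskp^{p-1}|A|^2\,d\mu\,,
\]
and Young's inequality in the form $pK\Geskp^{p-1}\le K^p+(p-1)\Geskp^{p}$ converts the second summand into $\sigma K^p\int|A|^2\,d\mu$, the asserted right-hand side, plus $\sigma(p-1)\int\Geskp^{p}|A|^2\,d\mu$. It thus remains to absorb $\sigma(2p-1)\int\Geskp^{p}|A|^2\,d\mu$ into the negative terms. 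For this I would invoke Proposition~\ref{prop:kSimons}: since $\overline k-\kappa_n\ge\tfrac{\varepsilon}{2}H$ on the support (shown just above the statement), the endomorphism $W=(\overline k\,\mathrm{I}-A)^{-1}$ satisfies $c^{-1}H^{-1}\mathrm{I}\le W\le cH^{-1}\mathrm{I}$ with $c=c(n,\alpha,\varLambda,\varepsilon)$, and $G\le cH$ there, so $\tfrac{|\cd\overline k|^2}{H^2}\le c^{2}\tfrac{|\cd\overline k|^2}{G^2}$. Multiplying the pointwise inequality of Proposition~\ref{prop:kSimons} by $\Geskp^{p}H\ge0$ (legitimate because $\supp\,\Geskp\subset\overline U$) and using $|A|\le LH$ gives
\[
\tfrac{1}{2L^2}\int\Geskp^{p}|A|^2\,d\mu\le\int\Geskp^{p}H\Bigl(\Div(W^{2}\cd\overline k)-\inner{W}{\cd_{W^{2}\cd\overline k}A}+\tfrac12|W\cd\overline k|^2\tr(W)\Bigr)d\mu\,;
\]
integrating the divergence by parts and estimating each resulting term with the bounds on $W$, on $W^{2}\cd\overline k$ and on $G$, together with Young's inequality with a free parameter $\lambda$, one obtains, after multiplying back by $\sigma(2p-1)$, a bound of the form $c\bigl(\sigma p^{2}\lambda\int\Geskp^{p-2}|\cd\Ges|^2+(\sigma p^{2}\lambda^{-1}+\sigma p)\int\Geskp^{p}\tfrac{|\cd\overline k|^2}{G^2}+\sigma p\int\Geskp^{p}\tfrac{|\cd A|^2}{H^2}\bigr)$. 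Choosing $\lambda\sim p^{1/2}$ and $\sigma\le\ell p^{-1/2}$ with $\ell=\ell(n,\alpha,\varLambda,\varepsilon)$ sufficiently small, each of these three pieces is dominated by the matching good term $-p^{2}\int\Geskp^{p-2}|\cd\Ges|^2$, $-\gamma_1p\int\Geskp^{p}\tfrac{|\cd\overline k|^2}{G^2}$, $-\gamma_1p\int\Geskp^{p}\tfrac{|\cd A|^2}{H^2}$, and the estimate closes.

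The main obstacle is this last step: one must check that the geometry stays uniformly controlled on the support of $\Geskp$ --- that $\overline k-\kappa_n$, and hence $|W|$ and $|W^{2}\cd\overline k|$, is comparable to $H$ and that $G\le cH$ --- and then track the powers of $p$ carefully through the integration by parts so that the Simons-type term closes against the gradient terms precisely when $\sigma\lesssim p^{-1/2}$. The hypotheses (3) (preserved by the flow) and (4) (which keeps $\overline k\le\varLambda H$ for all $t$), together with the previously established estimate \eqref{eq:msmall} for $\kappa_n$, are exactly what supply this control.
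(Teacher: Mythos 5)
Your proposal is correct and follows essentially the same route as the paper: you expand $\Ges=\Geskp+K$ on the support (the $K\Geskp^{p-1}$ term yields $\sigma K^p\int|A|^2$ via the same Young inequality $pK\Geskp^{p-1}\le K^p+(p-1)\Geskp^p$), use the flow's gradient terms to control $\cd\overline k$ and $\cd A$, and replace the algebraic Poincar\'e inequality of Proposition~\ref{prop:Poincare} by the Simons-type identity of Proposition~\ref{prop:kSimons} (your reason for this substitution is the right one), with $\sigma\lesssim p^{-1/2}$ closing the absorption exactly as in the paper's Proposition~\ref{prop:kPoincaremconvex} and the final display \eqref{eq:Poincareremainskmconvex}.
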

\begin{proof}
Applying \eqref{eq:evolvekGesmconvex} and integrating the diffusion term by parts, we obtain
\bann
\frac{d}{dt}\int \Geskp^pd\mu\leq{}&-p(p-1)\int\Geskp^{p-2}|\cd\Ges|^2\,d\mu\\
{}&+\sigma p\int\Geskp^{p-1}\Ges|A|^2\\
{}&-\gamma_1p\int\Geskp^{p-1}GH^{\sigma-1}\lb\frac{|\cd\overline k|^2}{G^2}+\frac{|\cd A|^2}{H^2}\rb\,d\mu\\
{}&+2p(1-\sigma)\int\Geskp^{p}\frac{|\cd\Ges|}{\Gesk}\frac{|\cd H|}{H}\,d\mu\,.
\eann

Estimating $\Gesk\leq GH^{1-\sigma}$ and, by Young's inequality,
\bann
pK\Gesk^{p-1}\leq K^p+(p-1)\Gesk^{p}
\eann
and
\bann
2\frac{|\cd\Ges|}{\Gesk}\frac{|\cd H|}{H}\leq p^{\frac{1}{2}}\frac{|\cd\Ges|^2}{\Gesk^2}+p^{-\frac{1}{2}}\frac{|\cd H|^2}{H^2}\,,
\eann
we obtain
\ba
\frac{d}{dt}\int \Geskp^pd\mu\leq {}&-p\lb p-p^{\frac{1}{2}}-1\rb\int\Geskp^{p-2}|\cd\Ges|^2d\mu\nonumber\\
{}&-\gamma_1 p\int \Geskp^{p}\frac{|\cd \overline k|^2}{G^2}d\mu\nonumber\\
{}&-\lb \gamma_1 p-p^{\frac{1}{2}}\rb\int \Geskp^{p}\frac{|\cd A|^2}{H^2}d\mu\nonumber\\
{}&+2\sigma p\int\Geskp^{p}|A|^2d\mu+\sigma K^p \int |A|^2d\mu\,.\label{eq:Poincareremainskmconvex}
\ea

To estimate the penultimate term, we make use of Proposition \ref{prop:kSimons}.
\begin{prop}[Poincar\'e inequality]\label{prop:kPoincaremconvex}
There is a constant $\gamma_2>0$, which depends only on $n$, $\varLambda$ and $\varepsilon$, such that
\bann
\gamma_2\int \Geskp^p|A|^2\,d\mu\leq{}& p^{\frac{3}{2}}\int \Geskp^{p-2}|\cd\Ges|^2d\mu\\
{}&+(p^{\frac{1}{2}}+1)\int \Geskp^{p}\frac{|\cd\overline k|^2}{G^2}d\mu\\
{}&+\int \Geskp^{p}\frac{|\cd A|^2}{H^2}d\mu\,.
\eann
\end{prop}
\begin{proof}
Fix $\gamma=\gamma(n,\varLambda)>0$ so that $2\gamma|A|^2\leq H^2$. Then, by \eqref{eq:kSimons},
\bann
\gamma \int \Geskp^p{}&|A|^2\,d\mu\leq \frac{1}{2}\int \Geskp^pH^2\,d\mu\\
\leq{}& \int\Geskp^pH\Big(\Div\lb W^2\cd\overline k\rb-\inner{W}{\cd_{W^2\cd\overline k}A}\\
{}&\qquad\qquad+\frac{1}{2}\vert W\cd\overline k\vert^2\tr(W)\Big) d\mu\,.
\eann
Integration by parts now yields
\bann
\gamma \int &\Geskp^p|A|^2\,d\mu\\
{}&\quad\leq -\int\Geskp^pH^2\lb p\inner{\frac{\cd \Ges}{\Gesk}}{\frac{W^2\cd\overline k}{H}}+\inner{\frac{\cd H}{H}}{\frac{W^2\cd\overline k}{H}}\right.\\
{}&\qquad\left.+\inner{W}{\frac{\cd_{W^2\cd\overline k}A}{H}}-\frac{1}{2}|W\cd\overline k|^2\frac{\tr(W)}{H}\rb d\mu\,.
\eann
Estimating $|W|\leq C(n,\varepsilon)H^{-1}$ and $G\leq C(n,\varLambda)H$, we obtain
\bann
\gamma \int \Geskp^p|A|^2\,d\mu\leq{}& C\int\Geskp^p\lb p\frac{|\cd\Ges|}{\Gesk}\frac{|\cd\overline k|}{G}\right.\\
{}&\qquad\qquad\qquad+\left.\frac{|\cd\overline k|}{G}\frac{|\cd A|}{H}+\frac{|\cd\overline k|^2}{G^2}\rb d\mu\,,
\eann
where $C=C(n,\varLambda,\varepsilon)$. The claim now follows from Young's inequality.
\end{proof}

Applying Proposition \ref{prop:kPoincaremconvex} to the inequality \eqref{eq:Poincareremainskmconvex} yields

\bann
\frac{d}{dt}\int \Geskp^pd\mu\leq {}&-p\lb p-2\gamma_2^{-1}\sigma p^{\frac{3}{2}}-p^{\frac{1}{2}}-1\rb\int\Geskp^{p-2}|\cd\Ges|^2d\mu\\
{}&-\lb\gamma_1 p-2\gamma_2^{-1}\sigma p\lb p^{\frac{1}{2}}+1\rb\rb\int \Geskp^{p}\frac{|\cd \overline k|^2}{G^2}d\mu\\
{}&-\lb \gamma_1 p-p^{\frac{1}{2}}-2\gamma_2^{-1}\sigma p\rb\int \Geskp^{p}\frac{|\cd A|^2}{H^2}d\mu\\
{}&+\sigma K^p \int |A|^2d\mu\,.
\eann
This completes the proof of Proposition \ref{prop:kintestmconvex}.
\end{proof}

The iteration argument leading to an upper bound for $\Gesk$ now proceeds similarly as in \cite{Hu84}. The proof is then completed by estimating as in the final argument of the proof of Theorem \ref{thm:pinching}.

\section{Ancient solutions}\label{sec:ancient}


First, we prove Theorem \ref{thm:pinchingancient}.

\begin{proof}[Proof of Theorem \ref{thm:pinchingancient}]
Choose $L=L(n,\Gamma_0)<\infty$ such that 
\[
|A|\leq LH\quad\text{and}\quad -d_\Lambda(A)\leq L^{\frac{1}{2}}H\,,
\]
where $d_\Lambda$ denotes the signed distance to the boundary of $\Lambda$ (see \eqref{eq:distance}), and, as in the proof of Theorem \ref{thm:pinching}, define a function $G:M^n\times(-\infty,1)\to \R$ via
\bann
G:=\frac{\max\{-d_\Lambda(A),0\}^2}{2LH-|A|}\,.
\eann
Set also, for any $\varepsilon>0$ and $\sigma\in(0,1)$,
\[
\Ges:=(G-\varepsilon H)H^{\sigma-1}\,.
\]
Then, proceeding as in the proof of Theorem \ref{thm:pinching}, we obtain the inequalities
\ba\label{eq:pinchingancient1}
\frac{d}{dt}\int \Gesp^pd\mu\leq {}&-\lb p^2-p^{\frac{3}{2}}-p\rb\int \Gesp^{p-2}|\Ges|^2\,d\mu\nonumber\\
{}&-\lb\gamma_1p-p^{\frac{1}{2}}\rb\int\Gesp^p\frac{|\cd A|^2}{H^2}\,d\mu\nonumber\\
{}&+\sigma p\int\Gesp^p|A|^2\,d\mu
\ea
(cf. \eqref{eq:k0estmconvexity}) and 
\ba\label{eq:pinchingancient2}
\gamma_2\int \Gesp^p|A|^2\,d\mu\leq{}& \frac{p^\frac{3}{2}}{4}\int \Gesp^{p-2}|\cd \Ges|^2\,d\mu\nonumber\\
{}&+\lb 1+p^{\frac{1}{2}}\rb\int \Gesp^p\frac{|\cd A|^2}{H^2}\,d\mu
\ea
(cf. \eqref{eq:Poincareancient}) for suitable constants $\gamma_1$ and $\gamma_2$ which depend only on $n$, $\Gamma_0$ and $\varepsilon$, where $\Gesp:=\max\{\Ges,0\}$. We used these to conclude that
\bann
\frac{d}{dt}\int \Gesp^p\,d\mu\leq 0
\eann
for $p$ sufficiently large and $\sigma$ sufficiently small, of the order $p^{-\frac{1}{2}}$; however, choosing $\sigma$ a little smaller (but still of order $p^{-\frac{1}{2}}$), we obtain  the following.
\begin{lem}
There exists $\ell>0$, which depends only on $n$, $\Gamma_0$ and $\varepsilon$, such that
\ba\label{eq:intestconvexityancient}
\frac{d}{dt}\int \Gesp^p\,d\mu\leq -\sigma p\int \Gesp^p|A|^2\,d\mu
\ea
for all $p>\ell^{-1}$ and $\sigma<\ell p^{-\frac{1}{2}}$.
\end{lem}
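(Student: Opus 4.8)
The plan is to mimic the computation already carried out in the proof of Theorem \ref{thm:pinching} leading to \eqref{eq:k0estmconvexity} and \eqref{eq:Poincareancient}, which are reproduced here as \eqref{eq:pinchingancient1} and \eqref{eq:pinchingancient2}. First I would combine these two inequalities: use \eqref{eq:pinchingancient2} to estimate the last term $\sigma p\int \Gesp^p|A|^2\,d\mu$ in \eqref{eq:pinchingancient1}, but retaining a definite fraction of $\int \Gesp^p|A|^2\,d\mu$ on the good side rather than discarding it entirely. Concretely, from \eqref{eq:pinchingancient2} we may write, for any $\theta\in(0,1)$,
\bann
\sigma p\int\Gesp^p|A|^2\,d\mu\leq{}&\theta\,\sigma p\int\Gesp^p|A|^2\,d\mu\\
{}&+(1-\theta)\frac{\sigma p}{\gamma_2}\lb\frac{p^{\frac{3}{2}}}{4}\int\Gesp^{p-2}|\cd\Ges|^2\,d\mu+(1+p^{\frac{1}{2}})\int\Gesp^p\frac{|\cd A|^2}{H^2}\,d\mu\rb\,.
\eann
Substituting this into \eqref{eq:pinchingancient1} gives
\bann
\frac{d}{dt}\int\Gesp^p\,d\mu\leq{}&-\lb p^2-p^{\frac{3}{2}}-p-(1-\theta)\gamma_2^{-1}\sigma p^{\frac{5}{2}}/4\rb\int\Gesp^{p-2}|\cd\Ges|^2\,d\mu\\
{}&-\lb\gamma_1p-p^{\frac{1}{2}}-(1-\theta)\gamma_2^{-1}\sigma p(1+p^{\frac{1}{2}})\rb\int\Gesp^p\frac{|\cd A|^2}{H^2}\,d\mu\\
{}&-\theta\sigma p\int\Gesp^p|A|^2\,d\mu\,.
\eann

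The point is that each bracketed coefficient on the right can be made non-negative by the same choices as in Theorem \ref{thm:pinching}: since $\sigma\sim p^{-\frac{1}{2}}$, the term $(1-\theta)\gamma_2^{-1}\sigma p^{\frac{5}{2}}/4\sim p^2$, so we need the proportionality constant $\ell$ in $\sigma\leq\ell p^{-\frac{1}{2}}$ small enough (depending on $\gamma_2$ and the fixed choice of, say, $\theta=\frac12$) that $(1-\theta)\gamma_2^{-1}\ell/4<1$, and similarly for the $|\cd A|^2$ coefficient we need $\gamma_1 p$ to dominate, which holds for $p$ large since the competing term is $O(p)$ with small constant. Then the first two lines are $\leq 0$ and we are left with exactly \eqref{eq:intestconvexityancient} with the identification $\theta=1$ replaced by any fixed $\theta$; absorbing $\theta$ into $\ell$ (i.e. shrinking $\ell$ by a further fixed factor) we may as well state it with coefficient $\sigma p$ in front of $\int\Gesp^p|A|^2\,d\mu$ as written, since $\theta$ is a fixed constant.

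There is really no new obstacle here beyond bookkeeping — the only subtlety, and the one place to be careful, is to make sure that shrinking $\ell$ to retain the $-\theta\sigma p\int\Gesp^p|A|^2\,d\mu$ term does not interfere with the non-negativity of the other two coefficients; but since all three constraints are of the form ``$\ell$ smaller than a constant depending on $n,\Gamma_0,\varepsilon$'' and ``$p$ larger than a constant of the same type,'' one simply takes the minimum of the finitely many admissible thresholds. Thus I would phrase the proof as: ``Arguing exactly as in the derivation of \eqref{eq:k0estmconvexity} and \eqref{eq:Poincareancient}, but absorbing only a fraction $(1-\theta)$ of the term $\sigma p\int\Gesp^p|A|^2\,d\mu$ via Proposition \ref{prop:Poincare} and keeping the remaining $\theta\sigma p\int\Gesp^p|A|^2\,d\mu$ on the left, one obtains \eqref{eq:intestconvexityancient} upon choosing $\ell$ sufficiently small (depending on $n$, $\Gamma_0$ and $\varepsilon$).''
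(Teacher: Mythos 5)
There is a sign error that invalidates the central step. You split
\[
\sigma p\int\Gesp^p|A|^2\,d\mu=\theta\,\sigma p\int\Gesp^p|A|^2\,d\mu+(1-\theta)\sigma p\int\Gesp^p|A|^2\,d\mu
\]
with $\theta\in(0,1)$ and bound only the $(1-\theta)$ piece by the Poincar\'e inequality. Substituting this into \eqref{eq:pinchingancient1}, the leftover term is $+\theta\sigma p\int\Gesp^p|A|^2\,d\mu$, with a \emph{plus} sign: the $\sigma p\int\Gesp^p|A|^2\,d\mu$ term in \eqref{eq:pinchingancient1} appears with a positive coefficient, and retaining a fraction of it keeps it on the bad side. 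Your displayed inequality asserts $-\theta\sigma p\int\Gesp^p|A|^2\,d\mu$ instead, and this sign is not consistent with any self-consistent choice of $\theta\in(0,1)$; in fact, with the manipulation as you wrote it, the conclusion is \emph{weaker} than Proposition \ref{prop:mconvexityLpest}, not stronger.

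The correct version of the idea requires absorbing \emph{more}, not less, than in the proof of Theorem \ref{thm:pinching}. Add $\sigma p\int\Gesp^p|A|^2\,d\mu$ to both sides of \eqref{eq:pinchingancient1} to obtain
\[
\frac{d}{dt}\int\Gesp^p\,d\mu+\sigma p\int\Gesp^p|A|^2\,d\mu\leq -\lb p^2-p^{\frac32}-p\rb\int\Gesp^{p-2}|\cd\Ges|^2\,d\mu-\lb\gamma_1p-p^{\frac12}\rb\int\Gesp^p\frac{|\cd A|^2}{H^2}\,d\mu+2\sigma p\int\Gesp^p|A|^2\,d\mu\,,
\]
and then bound $2\sigma p\int\Gesp^p|A|^2\,d\mu$ by \eqref{eq:pinchingancient2}. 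The resulting coefficients
\[
p^2-p^{\frac32}-p-\tfrac{1}{2}\gamma_2^{-1}\sigma p^{\frac52}\,,\qquad \gamma_1p-p^{\frac12}-2\gamma_2^{-1}\sigma p\lb p^{\frac12}+1\rb
\]
are non-negative for $p>\ell^{-1}$ and $\sigma<\ell p^{-\frac12}$ with $\ell$ small, which is precisely the ``choose $\sigma$ a little smaller'' step the paper gestures at (one may take half the $\ell$ from Proposition \ref{prop:mconvexityLpest}). Your final remark about shrinking $\ell$ is the right instinct; the needed repair is to the decomposition, replacing the convex split $\sigma p=\theta\sigma p+(1-\theta)\sigma p$ by $\sigma p=-\sigma p+2\sigma p$.
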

We can now proceed exactly as in \cite[Theorem 5.2]{HuSi15}: Since $\sigma$ is of the order $p^{-\frac{1}{2}}$, we can still arrange that $\sigma p>2n+1$ if $p$ is sufficiently large. In that case, $\delta:=\frac{2}{\sigma p+1}<\frac{1}{n+1}<1$. Thus, noting also that $\Ges\leq H^\sigma$, we may estimate
\bann
\int \Gesp^p\,d\mu={}&\int \Gesp^{p(1-\delta)}\Gesp^{\delta p}\,d\mu\\
\leq{}&\int \Gesp^{p(1-\delta)}H^{\delta \sigma p}\,d\mu\\
={}&\int (\Gesp^{p}H^2)^{1-\delta}H^{\delta}\,d\mu\\
\leq{}&\lb \int \Gesp^{p}H^2\,d\mu\rb^{1-\delta}\lb\int H\,d\mu\rb^\delta
\eann
for $p$ sufficiently large. Applying \ref{eq:intestconvexityancient}, and recalling the algebraic inequality $n|A|^2\geq H^2$, this yields
\bann
\frac{d}{dt}\int \Gesp^p\,d\mu
\leq{}&-(2n+1)\int \Gesp^p|A|^2\,d\mu\\
\leq{}&-\int \Gesp^pH^2\,d\mu\\
\leq{}&-\lb\int \Gesp^p\,d\mu\rb^{\frac{1}{1-\delta}}\lb \int H\,d\mu\rb^{-\frac{\delta}{1-\delta}}\\
\leq{}&-\lb\int \Gesp^p\,d\mu\rb^{1+\frac{2}{\sigma p-1}}\lb \int H\,d\mu\rb^{-\frac{2}{\sigma p-1}}\,.
\eann
Setting
\bann
\varphi(t):=\int \Gesp(x,t)^p\,d\mu_t(x)\,,\quad \psi(t):=\int H(x,t)\,d\mu_t(x)
\eann
and $\beta:=\frac{2}{\sigma p-1}$, this becomes
\ba\label{eq:phiinequalityconvexityancient2}
\frac{d}{dt}\lb\varphi^{-\beta}\rb=-\beta \varphi^{-\beta-1}\frac{d}{dt}\varphi\geq \beta\psi^{-\beta}\,.
\ea
Note that, if $\varphi(s)=0$ for some $s\in (-\infty,1)$, then $\varphi(t)=0$ for all $t\in [s,1)$ since the inequality $G\leq\varepsilon H$ is preserved under the flow. Set
\bann
\tau_0:=\sup\{s\in (-\infty,0]:\varphi(t)>0 \text{ for all } t<s\}\,.
\eann
Suppose that $\tau_0>-\infty$. Then \eqref{eq:phiinequalityconvexityancient2} and H\"older's inequality yield
\ba\label{phiinequalityconvexityancient3}
\varphi^{-\beta}(\tau)-\varphi^{-\beta}(s)\geq{}& \beta\int_{s}^{\tau}\psi^{-\beta}(t)dt\nonumber\\
\geq{}& \beta(\tau-s)^{1+\beta}\lb\int_{s}^{\tau}\psi(t) dt\rb^{-\beta}
\ea
for any $s<\tau<\tau_0$. Applying the bounded rescaled volume hypothesis \eqref{eq:brv}, we conclude
\bann
\varphi^{\beta}(\tau)\leq C\frac{(1-s)^{\beta(n+1)}}{(\tau-s)^{1+\beta}}
\eann
for some $C<\infty$ and all $s<\tau<\tau_0$. But $0<n\beta<1$, so that, taking $s\to-\infty$, we obtain $\varphi(\tau)=0$, a contradiction. We are forced to conclude that $\tau_0=-\infty$. It follows that $G\geq -\varepsilon H$ for any $\varepsilon>0$, and hence $A_{(x,t)}\in \Lambda$ for all $(x,t)\in M^n\times(-\infty,1)$. The rigidity statement now follows from Proposition \ref{prop:splitting}.
\end{proof}

The following Lemma provides sufficient conditions for the bounded rescaled volume hypothesis.
\begin{lem}[Cf. \cite{HuSi15}]\label{lem:weakHbound}
Let $X:M^n\times(-\infty,1)\to\R^{n+1}$, $n\geq 2$, be a compact, connected, mean convex, embedded ancient solution of \eqref{eq:MCF}. Suppose that one of the following hold:
\ben
\item $\displaystyle \limsup_{t\to-\infty}\int H^n\,d\mu<\infty$,
\item $\displaystyle \limsup_{t\to-\infty}\max_{M^n\times\{t\}}H<\infty$,
\item $\displaystyle \liminf_{t\to-\infty}\min_{M^n\times\{t\}}\frac{\kappa_1}{H}>0$ or
\item $\displaystyle |\cd H|^2\leq \frac{1}{n-1}|\AA|^2H^2$.
\een
Then $M_t$ has bounded rescaled volume.
\end{lem}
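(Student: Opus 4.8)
The plan is to reduce all four cases to showing that $|\Omega_t|=O\big((-t)^{n+1}\big)$ as $t\to-\infty$; in view of the identity
\[
|\Omega_t|-|\Omega_0|=\int_t^0\hspace{-2mm}\int_{M^n}H(\cdot,s)\,d\mu_s\,ds\,,
\]
this is precisely condition \eqref{eq:brv}. I would dispatch (2) and (1) directly and then show that (3) implies (2) and (4) implies (1). For (2), fix $T_0$ and $C<\infty$ with $H\leq C$ on $M^n\times(-\infty,-T_0]$. Since the flow moves with speed $H$, for $t\leq-T_0$ the slices $M^n_t$ and $M^n_{-T_0}$ lie within Hausdorff distance $C(-T_0-t)$, so the circumradius grows at most linearly: $\rho_+(t)\leq\rho_+(-T_0)+C(-T_0-t)=O(-t)$. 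As $M^n_t=\pd\Omega_t$ with $\Omega_t$ bounded, $\Omega_t$ lies inside a ball of radius $O(-t)$, whence $|\Omega_t|=O\big((-t)^{n+1}\big)$.

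For (1), fix $T_0$ and $C<\infty$ with $\int_{M^n}H^n\,d\mu\leq C$ for $t\leq-T_0$. By \eqref{eq:evolvemu} with $\eta\equiv1$, $\tfrac{d}{dt}\mu_t(M^n)=-\int_{M^n}H^2\,d\mu$, and H\"older's inequality gives $\int H^2\,d\mu\leq C^{2/n}\mu_t(M^n)^{(n-2)/n}$ (this is trivial when $n=2$, where $\int H^2\,d\mu\leq C$ outright). Integrating the resulting differential inequality for $\mu_t(M^n)^{2/n}$ yields $\mu_t(M^n)=O\big((-t)^{n/2}\big)$. A second application of H\"older gives $\int_{M^n}H\,d\mu\leq C^{1/n}\mu_t(M^n)^{(n-1)/n}=O\big((-t)^{(n-1)/2}\big)$, and integrating in $s$ gives $\int_t^0\int_{M^n}H\,d\mu_s\,ds=O\big((-t)^{(n+1)/2}\big)=o\big((-t)^{n+1}\big)$.

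For (4), using \eqref{eq:evolveH}, \eqref{eq:evolvemu}, the identity $n|A|^2-H^2=n|\AA|^2$, and $\Delta(H^n)=nH^{n-1}\Delta H+n(n-1)H^{n-2}|\cd H|^2$, I would compute
\[
\frac{d}{dt}\int_{M^n}H^n\,d\mu=n\int_{M^n}H^{n-2}\big(H^2|\AA|^2-(n-1)|\cd H|^2\big)\,d\mu\geq0\,,
\]
so $\int_{M^n}H^n\,d\mu$ is non-decreasing in $t$ and hence bounded as $t\to-\infty$; this is hypothesis (1). For (3), the hypothesis gives $\delta>0$ and $T_0$ with $\kappa_1\geq\delta H$ on $M^n\times(-\infty,-T_0]$, with $H>0$ there by the strong maximum principle applied to \eqref{eq:evolveH}. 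Thus the restriction of the flow to $(-\infty,-T_0]$ is a convex ancient solution, and Hamilton's differential Harnack inequality (applied on $[-s,-T_0]$ and letting $s\to\infty$) gives $\pd_tH\geq0$ there, so $\max_{M^n}H(\cdot,t)\leq\max_{M^n}H(\cdot,-T_0)$ for all $t\leq-T_0$; this is hypothesis (2).

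I expect the only delicate point to be the reduction (3)$\Rightarrow$(2): one must verify that Hamilton's Harnack inequality applies to the ancient, but only eventually convex, solution obtained by restriction, and that passing the initial time to $-\infty$ produces the sharp pointwise bound $\pd_tH\geq0$. Everything else follows from the two displayed differential (in)equalities together with elementary ODE and H\"older estimates.
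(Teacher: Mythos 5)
Your proof is correct in all four cases, and cases (2) and (4) match the paper's argument essentially verbatim (including the observation that $\tfrac{d}{dt}\int H^n\,d\mu=n\int H^{n-2}(H^2|\AA|^2-(n-1)|\cd H|^2)\,d\mu$, where your exponent $H^{n-2}$ is the correct one---the paper actually has a typo there). Your treatments of (1) and (3) take genuinely different, though equally valid, routes. In case (1) the paper first derives the area bound $\mu_t(M^n)\leq C(1-t)^{n/2}$ just as you do, but then invokes the isoperimetric inequality to pass to $|\Omega_t|\leq C\mu_t(M^n)^{(n+1)/n}\leq C(1-t)^{(n+1)/2}$; your route applies H\"older a second time to bound $\int H\,d\mu$ pointwise in time and then integrates, avoiding the isoperimetric inequality altogether---slightly more elementary, same $(-t)^{(n+1)/2}$ bound. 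In case (3) the paper does not pass through Harnack: it observes that uniform convexity $\kappa_1\geq\varepsilon H$ gives $H^n\leq\varepsilon^{-n}K$ pointwise, so $\int H^n\,d\mu\leq\varepsilon^{-n}\int K\,d\mu=\varepsilon^{-n}\mathrm{Area}(S^n)$ by the area formula for the Gauss map of a convex embedded hypersurface, reducing directly to case (1). Your reduction via Hamilton's differential Harnack inequality (restricting to $(-\infty,-T_0]$ where the solution is uniformly convex, letting the initial time go to $-\infty$ to get $\pd_tH\geq 0$, hence a uniform bound on $H$) is legitimate and the point you flag---that Harnack is applicable to the restricted ancient convex flow and that the time-dependent term disappears in the limit---does go through, but it imports heavier machinery than the paper needs; the Gauss-map argument is a one-liner and worth knowing as the simpler route here.
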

\begin{proof}
The first three claims are proved just as in \cite{HuSi15}: For the first case, set
\[
\varLambda:=\sup_{t\in(-\infty,0]}\lb\int H^n\,d\mu\rb^{\frac{1}{n}}<\infty\,.
\]
Using H\"older's inequality and \eqref{eq:evolvemu}, observe that
\bann
\frac{d}{dt}\mu(M^n)^{\frac{2}{n}}={}&\frac{2}{n}\mu(M^n)^{\frac{2-n}{n}}\frac{d}{dt}\mu(M^n)\\
={}&-\frac{2}{n}\mu(M^n)^{\frac{2-n}{n}}\int H^2\,d\mu\\
\geq{}&-\frac{2}{n}\lb\int H^{n}\,d\mu\rb^{\frac{2}{n}}\\
\geq{}&-\frac{2}{n}\varLambda^{2}
\eann
for $t<0$. Integrating yields
\bann
\mu_t(M^n)\leq C(1-t)^{\frac{n}{2}}
\eann
for $t<0$. Applying the isoperimetric inequality for compact subsets of $\R^{n+1}$ then yields
\bann
(n+1)\omega_{n+1}^\frac{1}{n+1}\,|\Omega_t|\leq \mu_t(M^n)^{\frac{n+1}{n}}\leq C(1-t)^{\frac{n+1}{2}}\,,
\eann
where $\omega_{n+1}$ is the volume of the unit ball in $\R^{n+1}$. The claim follows.

For the second case, the speed bound yields a bound, for $t<0$,
\bann
\rho_+(t)\leq C(1-t)
\eann
for the circumradius, which immediately yields the desired bound, for $t<0$,
\bann
|\Omega_t|\leq C(1-t)^{n+1}
\eann
for the enclosed volume.

For the third case, we apply the area formula to obtain
\bann
\int H^n\,d\mu\leq \varepsilon^{-n}\int K\,d\mu=\varepsilon^{-n}\mathrm{Area}(S^n)
\eann
for some $\varepsilon>0$, where $K$ is the Gauss curvature, which reduces to case (1).

The final claim also follows from the first, after integrating the identity
\bann
\frac{d}{dt}\int H^n\,d\mu=n\int H^n\lb H^2|\AA|^2-(n-1)\frac{|\cd H|^2}{H^2}\rb\,d\mu\geq 0\,.
\eann
\end{proof}

We now prove Corollary \ref{cor:shrinkingsphere}.

\begin{proof}[Proof of Corollary \ref{cor:shrinkingsphere}]
The equivalence of (1) and (2) was proved by Huisken and Sinestrari \cite{HuSi15}, as were the remaining cases under the additional hypothesis that $M_t$ is convex for all $t$. Thus, by Theorem \ref{thm:pinchingancient}, it suffices to prove that the solutions in the remaining cases have bounded rescaled volume. This is immediate in case (3). Case (4) follows from the comparison principle, which, comparing the solution with shrinking sphere solutions, yields, for all $t<0$,
\bann
\rho_-(t)\leq C\sqrt{1-t}
\eann
for some $C<\infty$, reducing the hypothesis to that of case (3). To deal with case (5), we recall that a well-known comparison argument for \eqref{eq:evolveH} yields
\bann
\min_{M^n\times\{t\}}H\leq C\sqrt{\frac{n}{2(1-t)}}
\eann
for some $C<\infty$. It then follows from the hypothesis that $H$ is bounded for $t<0$ and the claim follows from Lemma \ref{lem:weakHbound}. The proof is similar for case (6). In the final case, we estimate, using H\"older's inequality,
\bann
|\Omega_t|-|\Omega_0|={}&\int_t^0\hspace{-3mm}\int H(\cdot,s)\,d\mu_s\,ds\\
\leq{}&\lb\int_t^0\hspace{-3mm}\int H^2(\cdot,s)\,d\mu_s\,ds\rb^{\frac{1}{2}}\lb\int_t^0\hspace{-3mm}\int d\mu_s\,ds\rb^{\frac{1}{2}}\\
={}&\big(\mu_t(M^n)-\mu_0(M^n)\big)^{\frac{1}{2}}\lb\int_t^0\mu_s(M^n)\,ds\rb^{\frac{1}{2}}\\
\leq{}&\big(\mu_t(M^n)-\mu_0(M^n)\big)^{\frac{1}{2}}\sqrt{-t}\,\mu_t(M^n)^{\frac{1}{2}}\,.
\eann
The reverse isoperimetric inequality now yields
\bann
|\Omega_t|
\leq{}& C\sqrt{1-t}\,|\Omega_t|^{\frac{n}{n+1}}
\eann
for $t<0$. Rearranging yields 
\bann
|\Omega_t|\leq C(1-t)^{\frac{n+1}{2}}\,,
\eann
which implies the claim.
\end{proof}

\begin{rem}
In some cases, we can already obtain information about ancient solutions from Theorem \ref{thm:pinching} (or, indeed, from the previously known estimates \cite{Hu84}, \cite{HuSi99b}). For example, in \cite{HuSi15} it was proved (see equation (2.10) and Lemmas 4.1 and 4.4 and the proof of Theorem 4.2) that compact, convex ancient solutions satisfying a reverse-isoperimetric inequality automatically satisfy
\bann
\mu_{t}(M^n)^{\frac{1}{n}}\leq C\sqrt{T-t}\,,
\eann
\bann
\max_{M^n\times\{t\}}H\leq \frac{C}{\sqrt{1-t}}
\eann
and
\bann
A\geq \varepsilon Hg\,.
\eann
Theorem \ref{thm:pinching} then implies that the solution is a shrinking sphere.

Another nice situation is the case of surfaces evolving in  $\R^3$. In that case, we have
\bann
-\frac{d}{dt}\mu_t(M^2)=\int H^2\,d\mu\geq \int_{M^2_+} H^2\,d\mu\geq 4\int_{M^2_+}K\,d\mu\geq 4\sigma_2\,,
\eann
where $M_+^2$ is the contact set of $M$ and $\sigma_2:=\mathrm{Area}(S^2)$. Integrating, we find
\bann
4\sigma_2(T-t)\leq \mu_t(M^2)\,.
\eann
It now follows from Theorem \ref{thm:pinching} that mean convex, type-I ancient solutions $X:M^2\times(-\infty,1)\to\R^3$ of \mcf satisfying
\bann
\liminf_{t\to-\infty}\min_{M^2\times\{t\}} \frac{\kappa_1}{H}>-\infty
\eann
are strictly convex. Arguing as in \cite[Theorem 4.2]{HuSi15}  then implies that $M^2_t$ is uniformly convex, and hence, applying Theorem \ref{thm:pinching} once more, a shrinking sphere.
\end{rem}

Using Corollary \ref{cor:mconvexityancient}, Theorem \ref{thm:inscribedancient} can be proved by the same method as Theorem \ref{thm:pinchingancient}.
\begin{proof}[Proof of Theorem \ref{thm:inscribedancient}]
Define the function $G$ as in \ref{eq:inscribedG} and set, for any $\varepsilon>0$ and $\sigma\in(0,1)$,
\bann
\Ges:=(G-\varepsilon H)H^{\sigma-1}\,.
\eann
Since, by Corollary \ref{cor:mconvexityancient}, $\kappa_n-\frac{1}{n-m}H\leq 0$ (i.e. we can take $K=0$ in the definition of $\Gesk$), we can proceed as in the proofs of Theorem \ref{thm:inscribed} and Theorem \ref{thm:pinchingancient} to obtain the inequality
\bann
\frac{d}{dt}\int \Gesp^p\,d\mu\leq-\sigma p\int \Gesp^p|A|^2\,d\mu\,,
\eann
where $\Gesp:=\max\{\Ges,0\}$. We then continue as in the proof of Theorem \ref{thm:pinchingancient} to obtain $G\leq 0$; that is,
\bann
\overline k\leq \frac{1}{n-m}H\,.
\eann
To obtain the rigidity statement, assume that $\overline k=\frac{1}{n-m}H$ at some interior point. It follows from the evolution equations for $H$ \eqref{eq:evolveH} and $\overline k$ \eqref{eq:evolveinscribedk} and the strong maximum principle \cite{DL04} that $\overline k\equiv \frac{1}{n-m}H$. If $\overline k=\kappa_n$ at some point, the claim follows from Proposition \ref{prop:splitting} as in Theorem \ref{thm:pinchingancient}. So suppose that $\overline k>\kappa_n$ everywhere. Then, together, the evolution equation for $H$ \eqref{eq:evolveH} and the presence of the gradient term in the evolution equation for $\overline k$ on the set $\{\overline k>\kappa_n\}$ (Lemma \ref{lem:evolvek}) yield the conclusion $\cd \overline k\equiv 0$, and hence $\cd H\equiv 0$. It follows that the solution is a shrinking sphere \cite{Al39}. 
\end{proof}

Combining the argument in \cite{Br15} with the convexity estimate of Corollary \ref{cor:convexityancient}, Theorem \ref{thm:exscribedancient} is proved similarly.

\bibliographystyle{plain}
\bibliography{bib}

\end{document}